\theoremstyle{plain}
\newtheorem{theorem}{Theorem}[section]
\newtheorem{lemma}[theorem]{Lemma}
\newtheorem{corollary}[theorem]{Corollary}
\newtheorem{proposition}[theorem]{Proposition}
\theoremstyle{definition}
\newtheorem{definition}[theorem]{Definition}
\newtheorem{example}[theorem]{Example}
\theoremstyle{remark}
\newtheorem{remark}{Remark}
\pgfplotsset{compat=1.14}
\newcommand{\red}[1]{\textcolor{red}{#1}}
\newcommand{\R}{\mathbb{R}}
\newcommand{\oR}{\overline{\R}}
\newcommand{\norm}[1]{\|#1\|}
\newcommand{\dist}[1]{{\rm dist}(#1)}
\newcommand{\mv}{\,\big |\,}
\newcommand{\bmv}{\,\Big |\,}
\newcommand{\bbmv}{\,\bigg |\,}
\newcommand{\A}{{\cal A}}
\newcommand{\B}{{\cal B}}
\newcommand{\K}{{\cal K}}
\newcommand{\Sp}{{\mathcal S}}
\newcommand{\Z}{{\cal Z}}
\newcommand{\setto}[1]{\mathop{\rightarrow}\limits^#1}
\newcommand{\longsetto}[1]{\mathop{\longrightarrow}\limits^{#1}}
\newcommand{\skalp}[1]{\langle #1\rangle}
\newcommand{\xb}{\bar x}
\newcommand{\yb}{\bar y}
\newcommand{\lb}{\bar\lambda}
\newcommand{\AT}[2]{{\textstyle{#1\atop#2}}}
\newcommand{\xba}{{\bar x^\ast}}
\newcommand{\oo}{o}
\newcommand{\OO}{{\cal O}}
\newcommand{\argmin}{\mathop{\rm arg\,min}}
\newcommand{\lin}{{\rm lin\,}}
\newcommand{\cl}{{\rm cl\,}}
\newcommand{\inn}{{\rm int\,}}
\newcommand{\co}{{\rm conv\,}}
\newcommand{\gph}{\mathrm{gph}\,}
\newcommand{\cocl}{\mathrm{cocl}\,}
\newcommand{\dom}{\mathrm{dom}\,}
\newcommand{\tto}{\rightrightarrows}
\newcommand{\Limsup}{\mathop{{\rm Lim}\,{\rm sup}}}
\newcommand{\myvec}[1]{\begin{pmatrix}#1\end{pmatrix}}
\newcommand{\SCD}{SCD\ }
\newcommand{\scdreg}{{\rm scd\,reg\;}}
\newcommand{\ssstar}{semismooth$^{*}$ }
\newcommand{\ee}[2]{{#1}^{(#2)}}
\newcommand{\rge}{{\rm rge\;}}
\newcommand{\onabla}{\overline\nabla}
\newcommand{\sepa}{\,\vdots\,}
\begin{document}

\title[]{On the role of semismoothness in the implicit programming approach to selected nonsmooth optimization problems}

\author[1,2]{\fnm{Helmut} \sur{Gfrerer}}\email{helmut.gfrerer@ricam.oeaw.ac.at}

\author[1,3]{\fnm{Michal} \sur{Ko\v{c}vara}}\email{m.kocvara@bham.ac.uk}

\author[1]{\fnm{Ji\v{r}\'{i} V.} \sur{Outrata}}\email{outrata@utia.cas.cz}
\affil[1]{\orgdiv{Institute of Information Theory and Automation}, \orgname{Czech Academy of Sciences}, \orgaddress{\street{Pod vod\'{a}renskou v\v{e}\v{z}\'{\i}} 4, \city{Prague}, \postcode{18208}, \country{Czech Republic}}}

\affil[2]{\orgdiv{Johann Radon Institute for  Computational and Applied Mathematics (RICAM)}, \orgname{Austrian Academy of Sciences}, \orgaddress{\street{Altenbergerstr.\ 69}, \city{Linz}, \postcode{A-404}, \country{Austria}}}

\affil[3]{\orgdiv{School of Mathematics}, \orgname{University of Birmingham}, \orgaddress{\city{Birmingham}, \postcode{B15 2TT}, \country{U.K}}}

\newlist{myitems}{enumerate}{1}
\setlist[myitems, 1]
{label=\arabic{myitemsi}., 
leftmargin=2\parindent,
rightmargin=00pt%
}

\abstract{
The paper deals with the implicit programming approach to a class of Mathematical Programs with Equilibrium Constraints (MPECs) and bilevel programs in the case when the corresponding reduced problems are solved using a bundle method of nonsmooth optimization. The obtained results allow us to supply the bundle algorithm with suitable, easily computable  ``pseudosubgradients'', ensuring convergence to points satisfying a stationary condition. Both the theory and computational implementation heavily rely on the notion of SCD (subspace containing derivatives) mappings and the associated calculus. The approach is validated via a complex MPEC with equilibrium governed by a variational inequality of the 2nd kind, by a shape optimization problem with a nonsmooth objective and by an academic bilevel program with a nonsmooth upper-level objective.}
\keywords{Nonsmooth problems, Semismoothness, Implicit programming approach}

\date{}

\maketitle

\section{Introduction}
It is well-known that most rules of generalized differentiation attain the form of inclusions, unless rather restrictive qualification conditions have been applied.
This might cause difficulties in connection with computational techniques such as subgradient and bundle methods of nonsmooth optimization. Beginning with \cite{No78, No80}, mathematicians attempted to ensure convergence of these methods even in cases where exact subgradients could not be provided at every iteration. This lead to the concept of pseudosubgradients, as introduced in \cite{MiGuNo87}.
Subsequent research has recognized that this challenge can be addressed using certain semismoothness property, which does not necessarily rely on generalized gradients or generalized Jacobians; see, e.g., \cite{Kum00, ChNaQui00, HiItKu03}. A recent study, \cite{GfrOut26a}, provides an in-depth analysis of this issue in the context of the ``model'' problem

\begin{align}
\nonumber  \min\ &\varphi(x,y)\\
\label{EqCompProbl}  \mbox{subject to\quad}&0\in F(x,y),\\
\nonumber &x\in U_{ad},
\end{align}
where $\varphi:\R^n\times\R^m\to\R$ is a locally Lipschitz function, $F:\R^n\times\R^m\tto\R^m$ is a set-valued mapping, and $U_{ad}\subset\R^n$ is a closed set.

The main goal of this paper is to extend the results from \cite{GfrOut26a} to the case when (\ref{EqCompProbl}) corresponds either to a {\em mathematical program with equilibrium constraints } (MPEC) or to a problem of {\em bilevel programming}.
%

It is known that, under suitable assumptions on the problem data, one can convert problem (\ref{EqCompProbl}) to the so-called reduced problem in variable $x$ only and solve it using an appropriate method of nonsmooth optimization. This methodology is referred to as the \emph{implicit programming approach} (ImP approach) or {\em reduced approach}, and it is also widely applied in connection with other minimization techniques.
It has been extensively studied in numerous works, including the monographs \cite{LuPaRa97, OKZ, Dem}. However, in many problems it is not always possible to provide the bundle method with  correct Clarke subgradients of the reduced objective, as required. This limitation arises due to the above mentioned inclusions inherent in the applied calculus rules.

It was presumably \cite{DeBa01} where the authors for the first time  utilized the pseudosubgradient theory in a specific bilevel program of the form \eqref{EqCompProbl}. In \cite{Dem} one then finds a procedure for the computation of pseudosubgradients, provided the inclusion constraint in \eqref{EqCompProbl} amounts to KKT-conditions of a mathematical program in variable $y$, parameterized by $x$. In \cite{GfrOut26a} the authors address this issue in a general framework, by analysis of the model program \eqref{EqCompProbl}. In particular, it is shown there
\if{
Notice that, in \cite{DeBa01}, the authors addressed a specific bilevel program and, utilizing the pseudosubgradient theory developed in \cite{MiGuNo87}, they demonstrated that the bundle trust-region method (BT) \cite{SZ} converges and produces points satisfying the so-called pseudostationarity condition. This approach was extended to more general bilevel programs in the monograph \cite{Dem}.
}\fi
 that the Clarke subgradients of the reduced objective in the employed bundle method can be effectively replaced with computable pseudosubgradients, provided
\begin{enumerate}
    \item
    the mapping $S:x\mapsto y$, defined implicitly through the inclusion
$0\in F(x,y)$, satisfies some natural assumptions, and
\item
the mappings $\varphi$ and $S$ are semismooth in the prescribed sense.

\end{enumerate}
Under these conditions the described variant of the ImP approach works and generates  solutions which are stationary in a reasonable sense.

This paper deals with a particular instance of
(\ref{EqCompProbl}), when it models either an MPEC or a bilevel program. In the former case one has
$$
F(x,y)=  f(x,y) + Q(x,y),
$$
where $f:\mathbb{R}^{n} \times \mathbb{R}^{m} \rightarrow  \mathbb{R}^{m}$  and $Q:\mathbb{R}^{n} \times \mathbb{R}^{m} \rightrightarrows  \mathbb{R}^{m}$ whereas, in case of bilevel programming,
$$
S(x)=\mbox{argmin}_y\{\psi(x,y)+q(x,y)\}
$$
with $\psi$ and $q$ mapping $\mathbb{R}^{n} \times \mathbb{R}^{m}$ into $\mathbb{R}$ and $\oR$, respectively. In this way we address here a substantially broader class of lower-level equilibria than those investigated in \cite{Dem} (and also in \cite{LuPaRa97,OKZ}).

In accordance with \cite{GfrOut26a}, we rquire that multifunction $Q$  possesses the so-called SCD (subspace containing derivative) property; cf.\ \cite{GfrOut22a}. This assumption is not restrictive and allows us to weaken the assumptions needed for applying this approach, as well as to simplify the computation of the pseudosubgradients. That is why we (in Sections 3.1, 3.2) explain in detail how to compute appropriate SCD subspaces for several commonly encountered multifunctions $Q$.

As mentioned already in \cite{GfrOut26a}, the solutions of MPECs, computed by the presented approach, satisfy a stationarity condition that is always weaker (less restrictive) than the Clarke stationarity. On the other hand, under some additional assumptions, they satisfy the Clarke stationarity condition of an  equivalent MPEC, where an additional control arises as a canonical perturbation. This serves as a kind of ``seal of quality'' for this approach.

The usage of the semismoothness and the SCD properties allows us to substantially extend the class of lower-level equilibria and upper-level objectives, to which this type of ImP approach can be used. Further, we need not to evaluate elements from the limiting coderivative like in \cite{Has2}. On the other hand, since we use a standard bundle method for solving the reduced problem, our approach is limited to problems where the number $n$ of control variables is not too large.

\if{\red{As a byproduct, we may apply the concept of the so-called $\Psi$-semismoothness  also to the numerical solution of a decomposable optimization problem in two variables. This problem has the structure of the model problem (\ref{EqCompProbl}) and  can be efficiently solved by the semismooth Newton method  discussed in \cite{Kum00, Ulb11}.}
}\fi

The paper is organized as follows. The preliminary Section 2 includes basic notions of variational analysis and all elements from the theory of semismooth and SCD mappings, needed throughout the text. Section 3 focuses on MPECs. In addition to theoretical results, it contains two challenging  test examples: a complex Stackelberg game of producers/firms in an oligopolistic market adapted from \cite{GfrOutVal23} and a shape optimization problem from continuum mechanics adapted from \cite{Has2}. The approach to bilevel programs, including an illustrative test example, is discussed in Section~4. It can be viewed as an alternative to \cite[Section 6.2]{Dem}, where pseudostationary points are computed under different assumptions. In both test examples we use algorithms and software from the family BT (bundle trust), described in \cite{SZ, Zowe, SZUG}.
\if{\red{Finally, Section 5 discusses the proposed approach for handling the considered decomposable programs.}
}\fi

The following notation is employed. Given a linear subspace $L\subseteq \R^n$, $ L^\perp$ denotes its orthogonal complement.
Further, given a multifunction $F$, $\gph F:=\{(x,y)\mv y\in F(x)\}$ stands for its
graph and we denote by $\dom F$ the set of all points $x$ with $F(x)\not=\emptyset$. We call $F$ {\em single-valued}, if for every $x\in \dom F$ the set $F(x)$ is a singleton. For an element $u\in\R^n$, $\norm{u}$ denotes its Euclidean norm and  $\B_\delta(u)$ denotes the open ball around $u$ with radius $\delta$. In a product space we use the norm $\norm{(u,v)}:=\sqrt{\norm{u}^2+\norm{v}^2}$. The space of all $m\times n$ matrices is denoted by $\R^{m\times n}$. For a matrix $A\in\R^{m\times n}$ , we employ the operator norm $\norm{A}$ with respect to the Euclidean norm and denote the range of $A$ by $\rge A$. If $A$ is a square nonsingular matrix, we write $A^{-T}:=(A^{-1})^T$. We use $I_n$ to signify the $n\times n$ identity matrix. Given a set $\Omega\subset\R^s$, we write $\Omega^T:=\{y^T\mv y\in\Omega\}$ and define the distance of a point $x$ to $\Omega$ by $\dist{x,\Omega}:=\inf\{\norm{y-x}\mv y\in\Omega\}$. The respective indicator function is denoted by $\delta_\Omega$.

\section{Preliminaries}

\subsection{Nonsmooth analysis}

Consider a mapping $F:\Omega\to \R^m$, where $\Omega\subset\R^n$ is an open set. In the sequel we denote by $\OO_F$ the set of points where $F$ is (Fr\'echet) differentiable and we denote the Jacobian by $\nabla F(x)$, $x\in\OO_F$. We use this notation also for real-valued functions $f:\R^n\to\R$ when $\nabla f(x)$ is an $1\times n$ matrix, i.e., a row vector.

  The {\em B-Jacobian (B-differential)} is defined as
\[\overline \nabla F(\xb)= \Limsup_{x\longsetto{\OO_F}\xb}\{\nabla F(x)\},\ \xb\in \Omega,\]
where "Limsup" stands for the outer/upper set limit in the sense of Painleve-Kuratowski. see, e.g., \cite[Chapter 4]{RoWe98}.
Then the Clarke generalized Jacobian is given by $\co \overline\nabla F(\xb)$.

If $F$ is locally Lipschitz, by Rademacher's Theorem the set $\Omega\setminus\OO_F$ is negligible, i.e., has Lebesgue measure $0$. Since the Jacobians $\nabla F(x)$, $x\in\OO_F$, are also locally bounded by the Lipschitz constant, we obtain $\emptyset\not=\overline \nabla F(x)\subset\co\overline \nabla F(x)$, $x\in \Omega$.

\if{We say that $F$ is {\em strictly differentiable} at $\xb\in \Omega$ if there is an $m\times n$ matrix $A$ such that
\[\lim_{\AT{x,x'\to\xb}{x\not=x'}}\frac{F(x')-F(x)-A(x'-x)}{\norm{x'-x}}=0.\]
Of course, this implies that $F$ is differentiable at $\xb$ and $\nabla F(\xb)=A$.
}\fi

For a locally Lipschitz function $f:\Omega\to\R$, $\Omega\subset\R^n$ open, we can define the {\em Clarke subdifferential} as
\[\partial_c f(x) :=\co \overline \nabla f(x)^T:=\{g^T\mv g\in \co \overline \nabla f(x)\},\ x\in \Omega.\]
The elements of $\partial_c f(x)$ are called {\em subgradients} and are column vectors.

Given any extended real-valued function $f:\R^n\to\oR$, we always assume that $f$ is proper, i.e., $f(x)>-\infty$ for all $x$ and $\dom  f:=\{x\in\R^n\mv  f(x)<\infty\}\not=\emptyset$. Given a point $\xb\in \dom  f$, the {\em regular (Fr\'echet) subdifferential} of $f$ at $\xb$ is given by
\[\widehat\partial  f(\xb):=\Big\{x^*\in\R^n\mv\liminf_{x\to\xb}\frac{ f(x)- f(\xb)-\skalp{x^*,x-\xb}}{\norm{x-\xb}}\geq 0\Big\},\]
while the {\em limiting (Mordukhovich) subdifferential} is defined by
\[\partial  f(\xb):=\{x^*\mv \exists (x_k,x_k^*)\longsetto{\gph \widehat \partial f} (\xb, x^*) \mbox{ with } f(x_k)\to f(\xb)\}.\]
If $\Omega:=\dom f$ is open, then clearly $\OO_f\subset \dom\widehat\partial f$ and $\overline\nabla f(\xb)^T\subset\partial f(\xb)$, provided that $f$ is continuous at $\xb\in \Omega$. If $f$ is locally Lipschitz on $\Omega$ then
\[\partial_c f(x)=\co \partial f(x),\ x\in \Omega;\]
cf. \cite[Theorem 9.61]{RoWe98}. Further, if $f$ is a lower semicontinuous (lsc) convex function then $\partial f$  coincides with the Moreau-Rockafellar subdifferential from convex analysis.

\subsection{Variational geometry and differentiation of multifunctions}
Consider a set $A\subset\R^s$ and a point $\xb\in A$.The  {\em tangent (contingent, Bouligand) cone} to $A$ at $\bar{x}$ is given by
 \[T_{A}(\bar{x}):=\Limsup\limits_{t\downarrow 0} \frac{A-\bar{x}}{t},\]
 whereas the set
 \[\widehat{N}_{A}(\bar{x}):=(T_{A}(\bar{x}))^{\circ},\]
 i.e., the (negative) polar cone to the tangent cone $T_A(\xb)$, is the {\em regular (Fr\'{e}chet) normal cone} to~$A$ at $\bar{x}$, and
 \[N_{A}(\bar{x}):=\Limsup\limits_{\stackrel{A}{x \rightarrow \bar{x}}} \widehat{N}_{A}(x)\]
 is the {\em limiting (Mordukhovich) normal cone} to $A$ at $\bar{x}$.

\if{======================================================
If $A$ is convex, then $\widehat{N}_{A}(\bar{x})= N_{A}(\bar{x})$ amounts to the classical normal cone in
the sense of convex analysis and we will  write $N_{A}(\bar{x})$.

The limiting normal cone enables us to describe the local behavior of set-valued maps via the following generalized derivative.
===========================================================}\fi

Given a multifunction $F:\R^n\tto\R^m$ and a point $(\xb,\yb)\in \gph F$, the multifunction $D^\ast F(\xb,\yb ): \R^m \tto \R^n$,  defined by
 \[ \gph D^\ast F(\xb,\yb )=\{(y^*,x^*)\mv (x^*,-y^*)\in N_{\gph F}(\xb,\yb)\},\]
is called the {\em limiting (Mordukhovich) coderivative} of $F$ at $(\xb,\yb )$.
\if{=================================================
\begin{definition}\label{DefGenDeriv}
Consider a  multifunction $F:\R^n\tto\R^m$ and let $(\xb,\yb)\in \gph F$.

 The multifunction $D^\ast F(\xb,\yb ): \R^m \tto \R^n$,  defined by
 \[ \gph D^\ast F(\xb,\yb )=\{(y^*,x^*)\mv (x^*,-y^*)\in N_{\gph F}(\xb,\yb)\}\]
is called the {\em limiting (Mordukhovich) coderivative} of $F$ at $(\xb,\yb )$.
\end{definition}

Given a set-valued mapping $F:\R^n\tto\R^m$, we denote by $\dom F$ the set of all points $x$ with $F(x)\not=\emptyset$. We call $F$ {\em single-valued}, if for every $x\in \dom F$ the set $F(x)$ is a singleton. In this case, we can omit the second argument and write $D^*F(x)$ instead of $D^*F(x,F(x))$.

If a mapping $F:\Omega\to\R^m$, $\Omega\subset\R^n$ open, is differentiable at $\xb\in \Omega$, then it holds that $\nabla F(\xb)^Tv^*\in D^*F(\xb)(v^*),\ v^*\in\R^m$. If $F$ is even strictly differentiable at $\xb$, then one also has
\[D^*F(\xb)(v^*)=\{\nabla F(\xb)^Tv^*\},\ v^*\in\R^m.\]
========================================================}\fi
We will now recall  some basic properties of  so-called {\em SC (subspace containing) derivatives}, which were  very recently introduced in \cite{GfrOut22a,GfrOut23}.
Let us denote by $\Z_{nm}$ the metric space of all $n$-dimensional subspaces of $\mathbb{R}^{n+m}$ equipped with the metric
\begin{equation*}
d_{\Z_{nm}}(L_{1},L_{2}):=\|P_{1}-P_{2}\|,
\end{equation*}
where $P_{i}$ is the symmetric $(n+m) \times (n+m)$ matrix representing the orthogonal projection onto $L_{i}, i=1,2$.

For notational reasons, we usually identify $\R^{n+m}$ with $\R^n\times\R^m$, i.e., for a subspace $L\in \Z_{nm}$ we  denote its elements by $(u,v)$ instead of $\myvec{u\\v}$. Analogously, given two matrices $A\in\R^{n\times n}$, $B\in\R^{m\times n}$ so that $L=\rge\!\!\begin{pmatrix}A\\B\end{pmatrix}$, we write $L=\rge(A,B)$ and we call $(A,B)$ a basis for $L$. Hence, given a matrix $C=(C_x\sepa C_y)\in\R^{l\times(n+m)}$, partitioned into submatrices $C_x\in\R^{l\times n}$ and  $C_y\in\R^{l\times m}$, the set
\[CL=C\,\rge(A,B)=C\left\{\myvec{Ap\\Bp}\bmv p\in\R^n\right\}=\{(C_xA+C_yB)p\mv p\in\R^n\}\]
is a subspace of $\R^l$. If $(A,B)\in\R^{n\times n}\times \R^{m\times n}$ is an arbitrary basis for a subspace $L\in Z_{nm}$, the set of all bases for this subspace is given by
\[\{(AC,BC)\mv C\in\R^{n\times n} \mbox{ nonsingular}\}.\]
\if{==============================================
\begin{lemma}[{\cite[Lemma 3.1]{GfrOut23}}]\label{LemBasic}
 The metric space $\Z_{nm}$ is (sequentially) compact.
\end{lemma}
=================================================}\fi


With  each $L \in \Z_{nm}$ one can associate its {\em adjoint} subspace $L^{*}$ defined by
  \begin{equation*}
L^{*}:=\{(-v^{*},u^{*}) \in \mathbb{R}^{m} \times \R^{n} \mv (u^{*},v^{*})\in L^{\perp}\}.
  \end{equation*}
  Since $\dim L^{\perp}=m$, it follows that $L^{*}\in \Z_{mn}$ (i.e., its dimension is $m$).
\if{================================================
It is easy to see that
\begin{equation*}
L^{*}= S_{nm} L^{\perp}, \mbox{ where } S_{nm} =
\left( \begin{array}{lc}
0 & -I_{m}\\
I_{n} & 0
\end{array}\right).
\end{equation*}
====================================================}\fi
Further we have $(L^*)^*=L$ and $d_{\Z_{mn}}(L_1^*,L_2^*)=d_{\Z_{nm}}(L_1,L_2)$ for all subspaces $L_1,L_2\in\Z_{nm}$ and thus the mapping $L\to L^*$ is an isometry between $\Z_{nm}$ and $\Z_{mn}$; cf.\ \cite{GfrOut23}.
In what follows, the symbol $L^*$ signifies both the adjoint subspace to some $L\in \Z_{nm}$ as well as an arbitrary subspace from $\Z_{mn}$. This double role, however, cannot lead to a confusion.

\begin{definition}Consider a mapping $F:\R^n\tto\R^m$.
\begin{enumerate}
\item
We say that $F$ is {\em graphically smooth of dimension} $n$ at $(\bar{x},\bar{y})$ if $T_{\gph F}(\bar{x},\bar{y})\in \Z_{nm}$. By $\mathcal{O}_{F}$
 we denote the subset of $\gph F$, where $F$ is graphically smooth of dimension $n$.
 \item The {\em SC (subspace containing) derivative} of $F$  is the mapping  $\Sp F:\R^n \times  \R^m \tto \Z_{nm}$ given by
 \begin{align*}
\Sp F(x,y):&=\{L \in \Z_{nm} \mv \exists(x_{k},y_{k})\longsetto{\OO_F}(x,y) \mbox{ such that } \lim_{k\to\infty} d_{\Z_{nm}}\big(L, T_{\gph F}(x_{k},y_{k})\big)=0\},
\end{align*}
and the {\em adjoint SC derivative} of $F$ is the mapping  $\Sp^* F:\R^n\times\R^m \tto \Z_{mn}$  defined by
\begin{align*}
\Sp^* F(x,y):&= \{L^*\mv L\in\Sp F(x,y)\}.
\end{align*}
\end{enumerate}
\end{definition}
Both $\Sp F$ and $\Sp^*F$ are in fact special  generalized derivatives of $F$ whose elements, by virtue of the above definitions, are subspaces. Further we have the relation
\begin{gather}\label{EqSp*SubsetDF*}
L^*\subset \gph D^{*}F(x,y)\ \forall L^*\in\Sp^*F(x,y);
\end{gather}
cf.\ \cite[Eq (15)]{GfrOut23}. Let us mention that in \cite{GfrOut23} the SC adjoint derivative $\Sp^*F$ was defined in a different but equivalent manner.

Since $\Z_{nm}$ is compact, cf. \cite[Lemma 3.1]{GfrOut23},  $\dom\Sp F=\dom\Sp^*F=\cl\OO_F$. Further, $\gph \Sp F$ is closed in $\R^n\times\R^m\times \Z_{nm}$ and, due to the isometry $L \leftrightarrow L^*$, $\gph \Sp^* F$ is closed in $\R^n\times\R^m\times\Z_{mn}$ as well.
On the basis of SC derivatives we may now introduce the following notion.
\begin{definition}
A mapping
$F:\R^n\tto\R^m$ is said to have the {\em SCD property} at $(\xb,\yb)\in \gph F$, provided $\Sp^*F(\xb,\yb)\neq \emptyset$. Further $F$ has the SCD property {\em around} a point $(\xb,\yb)\in\gph F$, if $F$ has the SCD property at all points $(x,y)\in\gph F$ close to $(\xb,\yb)$. Finally, $F$ is termed an {\em SCD mapping }if it has the SCD property at all points of $\gph F$.
\end{definition}
Using the above definition, the \SCD property at $(\xb,\yb)$ is obviously equivalent with the condition $\Sp F(\xb,\yb)\not=\emptyset$.

By virtue of \eqref{EqSp*SubsetDF*},  the adjoint SC derivative $\Sp^*F$ forms some kind of skeleton for the limiting coderivative $D^*F$ for SCD mappings. It contains the information which is important in applications, in particular for numerical computations, but it is much easier to compute than the limiting coderivative. Note that in these applications we only have to compute {\em one} subspace belonging to $\Sp^*F(x,y)$ and not the whole adjoint SC derivative. On the other hand, the limiting coderivative is a more general notion important for arbitrary set-valued mappings.

\if{===========================================
The derivatives $\Sp F$ and $\Sp^*F$ can be considered as a generalization of the B-Jacobian to multifunctions. In case of single-valued continuous mappings one has the following relationship.
\begin{lemma}[{\cite[Lemma 3.5]{GfrOut23}}]\label{LemSCDSingleValued}
  Let $\Omega\subset \R^n$ be open and let $f:\Omega\to\R^m$ be continuous. Then for every $x\in \Omega$ there holds
  \begin{align*}
    &\Sp f(x):=\Sp\big(x,f(x)\big)\supseteq \{\rge(I_n,A)\mv A\in\overline{\nabla} f(x)\},\\
    &\Sp^* f(x):=\Sp^*\big(x,f(x)\big)\supseteq \{\rge(I_m,A^T)\mv A\in\overline{\nabla} f(x)\}.
  \end{align*}
  If $f$ is Lipschitz  near $x$, these inclusions hold with equality and $f$ has the \SCD property at all points $x'$ sufficiently close to  $x$.
\end{lemma}
====================================}\fi

We now provide some important examples of SCD mappings; cf.\ \cite{GfrOut22a, GfrOut23}.

\begin{example}\label{ExSCD}\begin{enumerate}~
  \item Every locally Lipschitz mapping $f:\Omega\to\R^m$, $\Omega\subset\R^n$ open, is an SCD mapping and for every $x\in \Omega$ there holds
  \begin{align*}
    &\Sp f(x):=\Sp\big(x,f(x)\big)= \{\rge(I_n,A)\mv A\in\overline{\nabla} f(x)\},\\
    &\Sp^* f(x):=\Sp^*\big(x,f(x)\big)= \{\rge(I_m,A^T)\mv A\in\overline{\nabla} f(x)\}.
  \end{align*}
  \item Every maximally monotone mapping $F:\R^n\tto\R^n$ is an SCD mapping.
  \item In particular, the subgradient mapping $\partial q$ of an lsc convex function $q:\R^n\to\oR$ is an SCD mapping. Further, $\Sp(\partial q)(x,x^*)=\Sp^*(\partial q)(x,x^*)$, $(x,x^*)\in\gph \partial q$ and every subspace $L\in \Sp(\partial q)(x,x^*)$ is self-adjoint, i.e., $L=L^*$ and admits the basis representation $L=\rge(B,I_n-B)$ for some symmetric positive semidefinite matrix $B\in\R^{n\times n}$ with $\norm{B}\leq 1$; cf.\ \cite{GfrOut22a}. For instance, when $q$ is twice continuously differentiable then $B=(I_n+\nabla^2 q(x))^{-1}$.
  \item Also the subgradient mapping of prox-regular and subdifferentially continuous functions is an SCD mapping. Recall that an lsc function $q:\R^n\to\oR$ is prox-regular at $\xb\in\dom q$ for a subgradient $\xba\in\partial q(\xb)$, if there exist $\epsilon> 0$ and $r\geq 0$ such that
\[ q(x')\geq  q(x)+\skalp{x^*,x'-x}-\frac r2 \norm{x'-x}^2\]
 whenever $x'\in \B_\epsilon(\xb)$ and $(x,x^*)\in \gph \partial  q\cap (\B_\epsilon(\xb)\times \B_\epsilon(\xba))$ with $q(x)<q(\xb)+\epsilon$. Further, $q$ is called subdifferentially continuous at $\xb$ for $\xba$ if for every sequence $(x_k,x_k^*)\longsetto{\gph\partial q}(\xb,\xba)$ one has $q(x_k)\to q(\xb)$.

 If $q:\R^n\to\oR$ is both prox-regular and subdifferentially continuous at $\xb\in\dom q$ for $\xba\in\partial q(\xb)$, then $q$ has the SCD property around $(\xb,\xba)$ and for every $(x,x^*)$ sufficiently close to $(\xb,\xba)$ there holds  $\Sp(\partial q)(x,x^*)=\Sp^*(\partial q)(x,x^*)$. Every subspace $L\in\Sp(\partial q)(x,x^*)$ is again self-adjoint and admits now a basis representation $L=\rge(P,W)$, where $P$ and $W$ are symmetric $n\times n$ matrices satisfying
 \[P^2=P,\ \ W(I_n-P)=I_n-P\,;\]
cf.\ \cite{Gfr25a}. For instance, when $q$ is twice continuously differentiable at $\xb$ then $P=I_n$, $W=\nabla^2 q(\xb)$. In case when $q$ is convex, the matrix $W$ is positive semidefinite.
\end{enumerate}
 All the examples above are special cases of {\em graphically Lipschitz mappings} (\cite[Definition 9.66]{RoWe98}), which possess also the SCD property; cf.\ \cite{GfrOut23}.
\end{example}
The following definition tailors the so-called \ssstar property introduced in \cite{GfrOut21} to SCD mappings.
\begin{definition}[{cf. \cite[Definition 5.1]{GfrOut22a}}]\label{DefSSStar}
  We say that the mapping $F:\R^n\tto\R^m$ is SCD-\ssstar  at   $(\xb,\yb)\in\gph F$ if it has the SCD property around $(\xb,\yb)$ and for every $\epsilon>0$ there is some $\delta>0$ such that
\begin{align*}
\label{EqCharSCDSemiSmoothLim}&\lefteqn{\vert \skalp{x^*,x-\xb}-\skalp{y^*,y-\yb}\vert}\\
&\nonumber\qquad\leq \epsilon
\norm{(x,y)-(\xb,\yb)}\norm{(y^*,x^*)}\ \forall(x,y)\in \gph F\cap\B_\delta(\xb,\yb)\mbox{ and } \forall (y^*,x^*)\in L^*,\ L^*\in\Sp^*F(x,y).
\end{align*}
\end{definition}

We mention here two important classes of multifunctions having this property. The next result follows from \cite[Proposition 2.10]{GfrOut22a}. \if{\red{Dort wurde die \ssstar Eigenschaft gezeigt.}}\fi
\begin{proposition}\label{PropSSstar}
  \begin{enumerate}~
    \item[(i)]Every mapping whose graph is the union of finitely many closed convex sets is SCD-\ssstar at every point of its graph.
    \item[(ii)]Every mapping with closed subanalytic graph is SCD-\ssstar at every point of its graph.
  \end{enumerate}
\end{proposition}

\subsection{Closure of mappings}
Recall that a mapping $\Psi:\R^n\tto\R^m$ is called {\em locally bounded} at $\xb\in \R^n$ if there is a neighborhood $U$ of $\xb$ and a nonnegative real $M$ such that
    \begin{equation}\label{EqBndDef}\norm{y}\leq M\ \forall y\in \Psi(x)\ \forall x\in U.\end{equation}
Further, $\Psi$ is called {\em upper semicontinuous (usc)} at $\xb$ if $\Psi(\xb)$ is closed and for every open set  $V\supset\Psi(\xb)$ the set $\{x\mv \Psi(x)\subset V\}$ is a neighborhood of $\xb$.
\if{=========================================
Recall the following definitions.
\begin{definition}
  Consider a mapping $\Psi:\R^n\tto\R^m$.
  \begin{enumerate}
    \item $\Psi$ is called {\em locally bounded} at $\xb\in \R^n$ if there is a neighborhood $U$ of $\xb$ and a nonnegative real $M$ such that
    \begin{equation}\label{EqBndDef}\norm{y}\leq M\ \forall y\in \Psi(x)\ \forall x\in U.\end{equation}
    \item $\Psi$ is called {\em outer semicontinuous (osc)} at $\xb\in\R^n$ if $\Limsup_{x\to\xb}\Psi(x)=\Psi(\xb)$.
    \item $\Psi$ is called {\em upper semicontinuous (usc)} at $\xb$ if $\Psi(\xb)$ is closed and for every open set  $V\supset\Psi(\xb)$ the set $\{x\mv \Psi(x)\subset V\}$ is a neighborhood of $\xb$.
  \end{enumerate}
  These terms are invoked {\em on $X$}, a subset of $\R^n$, when the property holds for every $x\in X$. The set $X$ will not be mentioned if $X=\R^n$.
\end{definition}
It is well-known that the Clarke subdifferential $\partial_c\vartheta$ of a Lipschitz continuous function $\vartheta:\Omega\to\R$, $\Omega\subset\R^n$ open, has the following properties:
\begin{itemize}
  \item The mapping $\partial_c \vartheta$ is locally bounded and usc  on $\Omega$.
  \item The set $\partial_c \vartheta(x)$ is nonempty, convex and compact for every $x\in \Omega$.
\end{itemize}
===================================================================}\fi

We now discuss a possibility how to extend an arbitrary locally bounded mapping to a usc mapping.
Given  $\Psi:\R^n\tto \R^m$, we denote by $\cl \Psi$ the mapping whose graph equals to the closure of $\gph\Psi$, i.e.,
\[\gph(\cl\Psi) =\cl (\gph\Psi)= \{(x,y)\mv (x,y)=\lim_{k\to\infty}(x_k,y_k)\mbox{ for some sequence }(x_k,y_k)\in\gph \Psi\}.\]
Further, we denote by $\cocl \Psi$ the mapping defined by
\[(\cocl \Psi)(x):=\co\big((\cl\Psi)(x)\big),\ x\in \R^n.\]
Obviously, $\cl(\cl\Psi)=\cl\Psi$ and for every $x\in \dom(\cl\Psi)$ the set $(\cl \Psi)(x)$ is closed. Further, $\dom(\cl \Psi)\subset\cl(\dom\Psi)$ and it is easy to see that $\dom(\cocl\Psi)=\dom(\cl\Psi)$.
\begin{lemma}[{\cite[Lemma 2.7]{GfrOut26a}}]\label{LemCocl}
  Let $\Psi:\R^n\tto\R^m$ be a  mapping. If $\Psi$ is locally bounded at $x\in\R^n$ then   $\cl\Psi$ and $\cocl \Psi$ are also locally bounded at $x$ and $(\cocl\Psi)(x)$ is a convex and compact set. Further, both $\cl\Psi$ and $\cocl \Psi$ are usc at $x$.
\end{lemma}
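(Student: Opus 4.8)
The plan is to establish the four conclusions at $x$ in the order (i) local boundedness of $\cl\Psi$ and $\cocl\Psi$, (ii) convexity and compactness of $(\cocl\Psi)(x)$, (iii) outer semicontinuity (osc) of both mappings, and (iv) upper semicontinuity (usc) of both mappings, the last being deduced from the previous ones through the general principle that an osc, locally bounded, closed-valued mapping is automatically usc.

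For (i): using local boundedness of $\Psi$, pick $\delta>0$ and $M\ge0$ with $\B_\delta(x)$ contained in the neighbourhood $U$ from (\ref{EqBndDef}) and $\norm{y}\le M$ for every $y\in\Psi(x')$, $x'\in\B_\delta(x)$. If $(x',y)\in\gph(\cl\Psi)$ with $x'\in\B_\delta(x)$, choose $(x_k,y_k)\in\gph\Psi$ with $(x_k,y_k)\to(x',y)$; since $\B_\delta(x)$ is open, $x_k\in\B_\delta(x)$ for large $k$, so $\norm{y_k}\le M$ and hence $\norm{y}\le M$. Thus $\cl\Psi$ is locally bounded at $x$ with the same $M$, and since $\{y\mv\norm{y}\le M\}$ is convex, so is $\cocl\Psi$. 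For (ii): convexity of $(\cocl\Psi)(x)=\co\big((\cl\Psi)(x)\big)$ is built into the definition; by (i) and the closedness of $(\cl\Psi)(x)$ noted before the lemma, this value is compact, and the convex hull of a compact subset of $\R^m$ is compact (standard, via Carathéodory's theorem, writing $\co K$ as the continuous image of the compact set $\Delta\times K^{m+1}$ with $\Delta$ the unit simplex in $\R^{m+1}$); the case $(\cl\Psi)(x)=\emptyset$ is vacuous.

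For (iii): osc of $\cl\Psi$ is immediate and already indicated in the text — if $y\in\Limsup_{x'\to x}(\cl\Psi)(x')$, there are $(x_k,y_k)\to(x,y)$ with $(x_k,y_k)\in\gph(\cl\Psi)=\cl(\gph\Psi)$, which is closed, so $(x,y)\in\gph(\cl\Psi)$ (the reverse inclusion being trivial). For $\cocl\Psi$, let $y\in\Limsup_{x'\to x}(\cocl\Psi)(x')$, i.e.\ $y=\lim_ky_k$ with $x_k\to x$ and $y_k\in\co\big((\cl\Psi)(x_k)\big)$; by Carathéodory, $y_k=\sum_{i=1}^{m+1}\lambda^k_iv^k_i$ with $\lambda^k_i\ge0$, $\sum_{i=1}^{m+1}\lambda^k_i=1$, $v^k_i\in(\cl\Psi)(x_k)$. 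The $v^k_i$ are bounded by (i) and the $\lambda^k_i$ lie in the compact simplex, so along a subsequence $\lambda^k_i\to\lambda_i$, $v^k_i\to v_i$; osc of $\cl\Psi$ yields $v_i\in(\cl\Psi)(x)$, whence $y=\sum_{i=1}^{m+1}\lambda_iv_i\in(\cocl\Psi)(x)$. For (iv), I would use the auxiliary fact that an osc mapping $\Phi$ that is locally bounded at $x$ with $\Phi(x)$ closed is usc at $x$: otherwise there would be an open $V\supseteq\Phi(x)$ and $x_k\to x$ with $y_k\in\Phi(x_k)\setminus V$; local boundedness gives a subsequence $y_k\to y$, and $y\notin V$ because $V$ is open, while osc forces $y\in\Phi(x)\subseteq V$, a contradiction. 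Applying this to $\cl\Psi$ and to $\cocl\Psi$ — both closed-valued, osc and locally bounded at $x$ by (i)--(iii) — completes the proof.

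The one step requiring genuine care is the osc of $\cocl\Psi$: the point of invoking Carathéodory's theorem is to bound the number of mixing coefficients uniformly by $m+1$, so that local boundedness and compactness of the unit simplex together produce simultaneously convergent subsequences of the coefficients and of the points, permitting the limit to be taken inside the convex combination. Everything else is routine.
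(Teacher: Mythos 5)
Your proof is correct and is the standard argument for this result; the paper itself gives no proof, merely citing \cite[Lemma 2.7]{GfrOut24a}, and your route (local boundedness passed to the closure, Carath\'eodory for compactness of the convex hull and for osc of $\cocl\Psi$, and the general fact that an osc, locally bounded, closed-valued map is usc) is exactly what one expects there. No gaps: the one delicate point, extracting simultaneously convergent subsequences of the $m+1$ Carath\'eodory coefficients and points, is handled properly.
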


\begin{example}\label{ExGenJacobian}
  Let $F:\Omega\to\R^m$, $\Omega\subset\R^n$ open, be locally Lipschitz and consider the mapping $\Psi_F:\R^n\tto\R^{m\times n}$ given by
  \[\Psi_F(x):=\begin{cases}\{\nabla F(x)\}&\mbox{if $x\in \OO_F$}\\
  \emptyset&\mbox{else.}\end{cases}\]
  Then for every $x\in\Omega$ there holds $\overline \nabla F(x)=(\cl \Psi_F)(x)$ and $\co\overline \nabla F(x)=(\cocl \Psi_F)(x)$. Further $\cl\onabla F =\cl(\cl \Psi_F)=\cl \Psi_F=\onabla F$.

\end{example}

\if{=================================================
\subsection{SCD mappings}
In this section we collect only those basic facts from the theory of SCD-mappings which are needed in this paper. We refer the interested reader to \cite{GfrOut22a,GfrOut23}  for more details and more far-reaching properties.

Let us denote by $\Z_{nm}$ the metric space of all $n$-dimensional subspaces of $\mathbb{R}^{n+m}$ equipped with the metric
\begin{equation*}
d_{\Z_{nm}}(L_{1},L_{2}):=\|P_{1}-P_{2}\|,
\end{equation*}
where $P_{i}$ is the symmetric $(n+m) \times (n+m)$ matrix representing the orthogonal projection onto $L_{i}, i=1,2$.

For notational reasons, we usually identify $\R^{n+m}$ with $\R^n\times\R^m$, i.e., for a subspace $L\in \Z_{nm}$ we  denote its elements by $(u,v)$ instead of $\myvec{u\\v}$. Analogously, given two matrices $A\in\R^{n\times n}$, $B\in\R^{m\times n}$ so that $L=\rge\!\!\begin{pmatrix}A\\B\end{pmatrix}$, we write $L=\rge(A,B)$ and we call $(A,B)$ a basis for $L$. Hence, given a matrix $C=(C_x\sepa C_y)\in\R^{l\times(n+m)}$, partitioned into submatrices $C_x\in\R^{l\times n}$ and  $C_y\in\R^{l\times m}$, the set
\[CL=C\,\rge(A,B)=C\left\{\myvec{Ap\\Bp}\bmv p\in\R^n\right\}=\{(C_xA+C_yB)p\mv p\in\R^n\}\]
is a subspace of $\R^l$. If $(A,B)\in\R^{n\times n}\times \R^{m\times n}$ is an arbitrary basis for a subspace $L\in Z_{nm}$, the set of all bases for this subspace is given by
\[\{(AC,BC)\mv C\in\R^{n\times n} \mbox{ nonsingular}\}.\]
\begin{lemma}[{\cite[Lemma 3.1]{GfrOut23}}]\label{LemBasic}
 The metric space $\Z_{nm}$ is (sequentially) compact.
\end{lemma}


With  each $L \in \Z_{nm}$ one can associate its {\em adjoint} subspace $L^{*}$ defined by
  \begin{equation*}
L^{*}:=\{(-v^{*},u^{*}) \in \mathbb{R}^{m} \times \R^{n} \mv (u^{*},v^{*})\in L^{\perp}\}.
  \end{equation*}
  Since $\dim L^{\perp}=m$, it follows that $L^{*}\in \Z_{mn}$ (i.e., its dimension is $m$). It is easy to see that
\begin{equation*}
L^{*}= S_{nm} L^{\perp}, \mbox{ where } S_{nm} =
\left( \begin{array}{lc}
0 & -I_{m}\\
I_{n} & 0
\end{array}\right).
\end{equation*}
Further we have $(L^*)^*=L$ and $d_{\Z_{mn}}(L_1^*,L_2^*)=d_{\Z_{nm}}(L_1,L_2)$ for all subspaces $L_1,L_2\in\Z_{nm}$ and thus the mapping $L\to L^*$ is an isometry between $\Z_{nm}$ and $\Z_{mn}$; cf.\ \cite{GfrOut23}.
In what follows, the symbol $L^*$ signifies both the adjoint subspace to some $L\in \Z_{nm}$ as well as an arbitrary subspace from $\Z_{mn}$. This double role, however, cannot lead to a confusion.

\begin{definition}Consider a mapping $F:\R^n\tto\R^m$.
\begin{enumerate}
\item
We say that $F$ is {\em graphically smooth of dimension} $n$ at $(\bar{x},\bar{y})$ if $T_{\gph F}(\bar{x},\bar{y})\in \Z_{nm}$. By $\mathcal{O}_{F}$
 we denote the subset of $\gph F$, where $F$ is graphically smooth of dimension $n$.
 \item The {\em SC (subspace containing) derivative} of $F$  is the mapping  $\Sp F:\R^n \times  \R^m \tto \Z_{nm}$ given by
 \begin{align*}
\Sp F(x,y):&=\{L \in \Z_{nm} \mv \exists(x_{k},y_{k})\longsetto{\OO_F}(x,y) \mbox{ such that } \lim_{k\to\infty} d_{\Z_{nm}}\big(L, T_{\gph F}(x_{k},y_{k})\big)=0\},
\end{align*}
and the {\em adjoint SC derivative} of $F$ is the mapping  $\Sp^* F:\R^n\times\R^m \tto \Z_{mn}$  defined by
\begin{align*}
\Sp^* F(x,y):&= \{L^*\mv L\in\Sp F(x,y)\}.
\end{align*}
\end{enumerate}
\end{definition}
Both $\Sp F$ and $\Sp^*F$ are in fact special  generalized derivatives of $F$ whose elements, by virtue of the above definitions, are subspaces. Further we have the relation
\begin{gather}\label{EqSp*SubsetDF*}
L^*\subset \gph D^{*}F(x,y)\ \forall L^*\in\Sp^*F(x,y);
\end{gather}
cf.\ \cite[Eq (15)]{GfrOut23}. Let us mention that in \cite{GfrOut23} the SC adjoint derivative $\Sp^*F$ was defined in a different but equivalent manner.

Since $\Z_{nm}$ is compact,  $\dom\Sp F=\dom\Sp^*F=\cl\OO_F$. Further, $\gph \Sp F$ is closed in $\R^n\times\R^m\times \Z_{nm}$ and, due to the isometry $L \leftrightarrow L^*$, $\gph \Sp^* F$ is closed in $\R^n\times\R^m\times\Z_{mn}$ as well.
On the basis of these SC derivatives we may now introduce the following notion.
\begin{definition}
A mapping
$F:\R^n\tto\R^m$ is said to have the {\em SCD property} at $(\xb,\yb)\in \gph F$, provided $\Sp^*F(\xb,\yb)\neq \emptyset$. Further $F$ has the SCD property {\em around} a point $(\xb,\yb)\in\gph F$, if $F$ has the SCD property at all points $(x,y)\in\gph F$ close to $(\xb,\yb)$. Finally, $F$ is termed an {\em SCD mapping }if it has the SCD property at all points of $\gph F$.
\end{definition}
Using the above definition, the \SCD property at $(\xb,\yb)$ is obviously equivalent with the condition $\Sp F(\xb,\yb)\not=\emptyset$.

By virtue of \eqref{EqSp*SubsetDF*},  the adjoint SC derivative $\Sp^*F$ forms some kind of skeleton for the limiting coderivative $D^*F$ for SCD mappings. It contains the information which is important in applications, in particular for numerical computations, but it is much easier to compute than the limiting coderivative. Note that in these applications we only have to compute {\em one} subspace belonging to $\Sp^*F(x,y)$ and not the whole adjoint SC derivative. On the other hand, the limiting coderivative is a more general notion important for arbitrary set-valued mappings.

The derivatives $\Sp F$ and $\Sp^*F$ can be considered as a generalization of the B-Jacobian to multifunctions. In case of single-valued continuous mappings one has the following relationship.
\begin{lemma}[{\cite[Lemma 3.5]{GfrOut23}}]\label{LemSCDSingleValued}
  Let $\Omega\subset \R^n$ be open and let $f:\Omega\to\R^m$ be continuous. Then for every $x\in \Omega$ there holds
  \begin{align*}
    &\Sp f(x):=\Sp\big(x,f(x)\big)\supseteq \{\rge(I_n,A)\mv A\in\overline{\nabla} f(x)\},\\
    &\Sp^* f(x):=\Sp^*\big(x,f(x)\big)\supseteq \{\rge(I_m,A^T)\mv A\in\overline{\nabla} f(x)\}.
  \end{align*}
  If $f$ is Lipschitz  near $x$, these inclusions hold with equality and $f$ has the \SCD property at all points $x'$ sufficiently close to  $x$.
\end{lemma}

We now provide some important examples of SCD mappings; cf.\ \cite{GfrOut22a}.

\begin{example}\label{ExSCD}\begin{enumerate}~
  \item Every maximally monotone mapping $F:\R^n\tto\R^n$ is an SCD mapping.
  \item In particular, the subgradient mapping $\partial q$ of an lsc convex function $q:\R^n\to\oR$ is an SCD mapping. Further, $\Sp(\partial q)(x,x^*)=\Sp^*(\partial q)(x,x^*)$, $(x,x^*)\in\gph \partial q$ and every subspace $L\in \Sp(\partial q)(x,x^*)$ is self-adjoint, i.e., $L=L^*$ and admits the basis representation $L=\rge(B,I_n-B)$ for some symmetric positive semidefinite matrix $B\in\R^{n\times n}$ with $\norm{B}\leq 1$; cf.\ \cite{GfrOut22a}. For instance, when $q$ is twice continuously differentiable then $B=(I_n+\nabla^2 q(x))^{-1}$.
  \item Also the subgradient mapping of prox-regular and subdifferentially continuous functions is an SCD mapping. Recall that an lsc function $q:\R^n\to\oR$ is prox-regular at $\xb\in\dom q$ for a subgradient $\xba\in\partial q(\xb)$, if there exist $\epsilon> 0$ and $r\geq 0$ such that
\[ q(x')\geq  q(x)+\skalp{x^*,x'-x}-\frac r2 \norm{x'-x}^2\]
 whenever $x'\in \B_\epsilon(\xb)$ and $(x,x^*)\in \gph \partial  q\cap (\B_\epsilon(\xb)\times \B_\epsilon(\xba))$ with $q(x)<q(\xb)+\epsilon$. Further, $q$ is called subdifferentially continuous at $\xb$ for $\xba$ if for every sequence $(x_k,x_k^*)\longsetto{\gph\partial q}(\xb,\xba)$ one has $q(x_k)\to q(\xb)$.

 If $q:\R^n\to\oR$ is both prox-regular and subdifferentially continuous at $\xb\in\dom q$ for $\xba\in\partial q(\xb)$, then $q$ has the SCD property around $(\xb,\xba)$ and for every $(x,x^*)$ sufficiently close to $(\xb,\xba)$ there holds  $\Sp(\partial q)(x,x^*)=\Sp^*(\partial q)(x,x^*)$. Every subspace $L\in\Sp(\partial q)(x,x^*)$ is again self-adjoint and admits now a basis representation $L=\rge(P,W)$, where $P$ and $W$ are symmetric $n\times n$ matrices satisfying
 \[P^2=P,\ \ W(I_n-P)=I_n-P\,;\]
cf.\ \cite{Gfr24a}. For instance, when $q$ is twice continuous differentiable at $\xb$ then $P=I_n$, $W=\nabla^2 q(\xb)$. In case when $q$ is convex, the matrix $W$ is positive semidefinite.
\end{enumerate}
 All the examples above are special cases of {\em graphically Lipschitz mappings} (\cite[Definition 9.66]{RoWe98}), which possess also the SCD property; cf.\ \cite{GfrOut23}.
\end{example}
=====================================}\fi

\if{=========================================================
\subsection{Semismooth$^*$ mappings}

The following definition is adapted from \cite{GfrOut21}.

\begin{definition}\label{DefSemiSmooth}
\begin{enumerate}~
\item  A set $A\subseteq\R^n$ is called {\em \ssstar} at a point $\xb\in A$ if for every $\epsilon>0$ there is some $\delta>0$ such that
  \begin{equation*}
\vert \skalp{x^*,x-\xb}\vert\leq \epsilon \norm{x-\xb}\norm{x^*}\ \forall x\in A\cap\B_\delta(\xb)\
\forall x^*\in N_A(x).
\end{equation*}
\item
A set-valued mapping $F:\R^n\tto\R^m$ is called {\em \ssstar} at a point $(\xb,\yb)\in\gph F$, if
$\gph F$ is \ssstar at $(\xb,\yb)$, i.e., for every $\epsilon>0$ there is some $\delta>0$ such that
\begin{align}
\nonumber&\lefteqn{\vert \skalp{x^*,x-\xb}-\skalp{y^*,y-\yb}\vert}\\
\label{EqCharSemiSmoothLim}&\qquad\leq \epsilon
\norm{(x,y)-(\xb,\yb)}\norm{(y^*,x^*)}\ \forall(x,y)\in \gph F\cap\B_\delta(\xb,\yb)\ \forall
(y^*,x^*)\in\gph D^*F(x,y).
\end{align}
\end{enumerate}
\end{definition}
Note that the semismooth$^*$ property was defined in \cite{GfrOut21} in a different but equivalent way. The class of semismooth$^*$ mappings is rather broad.
We mention here two important classes of multifunctions having this property.
\begin{proposition}[{\cite[Proposition 2.10]{GfrOut22a}}]\label{PropSSstar}
  \begin{enumerate}~
    \item[(i)]Every mapping whose graph is the union of finitely many closed convex sets is \ssstar at every point of its graph.
    \item[(ii)]Every mapping with closed subanalytic graph is \ssstar at every point of its graph.
  \end{enumerate}
\end{proposition}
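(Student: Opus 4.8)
The plan is to establish both parts by a contradiction/compactness argument, part (ii) relying in addition on the curve selection lemma for subanalytic sets. First I would record the elementary equivalence that, for a closed set $A$, the \ssstar property at $\xb\in A$ is unchanged if one replaces $N_A$ by the regular normal cone $\widehat N_A$ in Definition \ref{DefSemiSmooth}: ``$N_A\Rightarrow\widehat N_A$'' is trivial because $\widehat N_A(x)\subset N_A(x)$, and the converse follows by writing $x^*=\lim x_k^*$ with $x_k^*\in\widehat N_A(x_k)$, $x_k\longsetto{A}x$, and letting $k\to\infty$, using $\norm{x_k-\xb}\to\norm{x-\xb}$ and $\norm{x_k^*}\to\norm{x^*}$. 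This will be important in (ii).

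\textbf{Part (i).} I would first prove the claim for a single closed convex set $C$ with $\xb\in C$. If it failed, there would be $\epsilon_0>0$ and sequences $x_k\in C\setminus\{\xb\}$, $x_k\to\xb$, and $x_k^*\in N_C(x_k)$ with $\norm{x_k^*}=1$ and $\vert\skalp{x_k^*,x_k-\xb}\vert>\epsilon_0\norm{x_k-\xb}$; passing to subsequences so that $x_k^*\to x^*$ and $u_k:=(x_k-\xb)/\norm{x_k-\xb}\to u$, I get unit vectors with $\vert\skalp{x^*,u}\vert\geq\epsilon_0$. Since $\gph N_C$ is closed, $x^*\in N_C(\xb)=(T_C(\xb))^\circ$; since the segments $[\xb,x_k]$ lie in $C$, each $u_k$ and hence $u$ lies in $T_C(\xb)$, so $\skalp{x^*,u}\leq0$; on the other hand convexity gives $\skalp{x_k^*,\xb-x_k}\leq0$, i.e.\ $\skalp{x_k^*,u_k}\geq0$, hence $\skalp{x^*,u}\geq0$; thus $\skalp{x^*,u}=0$, contradicting $\vert\skalp{x^*,u}\vert\geq\epsilon_0$. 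For $A=\bigcup_{i=1}^NC_i$ with all $C_i$ closed convex, I would shrink $\delta$ so that $C_j\cap\B_\delta(\xb)=\emptyset$ for every $j$ with $\xb\notin C_j$; then for $x\in A\cap\B_\delta(\xb)$ and $x^*\in N_A(x)$, writing $x^*=\lim x_k^*$ with $x_k^*\in\widehat N_A(x_k)$ and noting $\widehat N_A(x_k)\subset\widehat N_{C_i}(x_k)$ whenever $x_k\in C_i$ (because $C_i\subset A$ forces $T_{C_i}(x_k)\subset T_A(x_k)$), I would use the pigeonhole principle to fix one index $i$ with $x_k\in C_i$ along a subsequence; by the choice of $\delta$ this $C_i$ contains $\xb$, and in the limit $x\in C_i$, $x^*\in N_{C_i}(x)$. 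Applying the single-set estimate to the finitely many such $C_i$ at $\xb$, with a common $\delta$, finishes (i).

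\textbf{Part (ii).} Let $A:=\gph F$ be closed and subanalytic, $\xb\in A$. Here I would use the standard fact that $\gph\widehat N_A$ is again subanalytic, the regular normal cone being a local first-order construction stable under the operations (products, intersections, closures, projections of relatively compact pieces) preserving subanalyticity. Assuming $A$ is not \ssstar at $\xb$, the equivalence above produces $\epsilon_0>0$ and $x_k\longsetto{A}\xb$, $x_k\neq\xb$, $x_k^*\in\widehat N_A(x_k)$, $\norm{x_k^*}=1$, with $\vert\skalp{x_k^*,x_k-\xb}\vert\geq\epsilon_0\norm{x_k-\xb}$; after a subsequence $x_k^*\to\hat x^*$. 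Then the set
\[S:=\{(x,x^*)\in\gph\widehat N_A\mv\norm{x^*}=1,\ x\neq\xb,\ \vert\skalp{x^*,x-\xb}\vert\geq\epsilon_0\norm{x-\xb}\}\]
is subanalytic and $(\xb,\hat x^*)\in\cl S\setminus S$, so the curve selection lemma yields a real-analytic arc $t\mapsto(\gamma(t),\gamma^*(t))$ on $[0,\delta)$ with $(\gamma(0),\gamma^*(0))=(\xb,\hat x^*)$ and $(\gamma(t),\gamma^*(t))\in S$ for $t\in(0,\delta)$. The key step is that along this arc $\gamma'(t)$ and $-\gamma'(t)$ both belong to $T_A(\gamma(t))$ while $\gamma^*(t)\in\widehat N_A(\gamma(t))=(T_A(\gamma(t)))^\circ$, hence $\skalp{\gamma^*(t),\gamma'(t)}=0$ on $(0,\delta)$; consequently $g(t):=\skalp{\gamma^*(t),\gamma(t)-\xb}$ satisfies $g(0)=0$ and $g'(t)=\skalp{(\gamma^*)'(t),\gamma(t)-\xb}$, so $\vert g'(t)\vert\leq C\norm{\gamma(t)-\xb}$ near $0$. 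Writing the analytic curve as $\gamma(t)-\xb=t^pv+O(t^{p+1})$ with an integer $p\geq1$ and $v\neq0$ gives $\norm{\gamma(t)-\xb}\leq C't^p$, whence $\vert g(t)\vert\leq\int_0^t\vert g'\vert\leq C''t^{p+1}$, contradicting $\vert g(t)\vert\geq\epsilon_0\norm{\gamma(t)-\xb}\geq c\,\epsilon_0t^p$ for small $t>0$, with some $c>0$.

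The routine ingredients---subsequence extraction, the elementary tangent/normal cone inequalities, and the power-series expansion of the analytic arc---should be unproblematic. I expect the real difficulties to be two: in (i), the bookkeeping that pins down which convex piece $C_i$ governs a prescribed element of $N_A(x)$ near $\xb$ (resolved by the $\delta$-shrinking and pigeonhole above); and in (ii), justifying the subanalyticity of $\gph\widehat N_A$ so that the curve selection lemma is applicable, together with the orthogonality relation $\skalp{\gamma^*(t),\gamma'(t)}=0$ along the selected arc---an identity that is available \emph{only} because we first passed to the regular normal cone, and which is precisely what powers the order-of-vanishing estimate yielding the contradiction.
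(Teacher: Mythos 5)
The paper does not prove this proposition at all: it is imported verbatim by citation from \cite[Proposition 2.10]{GfrOut22a}, so there is no in-paper argument to compare against. Judged on its own, your proof is essentially sound. The reduction from $N_A$ to $\widehat N_A$ by passing to the limit in the defining inequality is correct, and part (i) --- the compactness/contradiction argument for a single convex set combined with the polarity relation $x^*\in N_C(\xb)$, $u\in T_C(\xb)$, plus the $\delta$-shrinking and pigeonhole bookkeeping that forces the governing piece $C_i$ to contain $\xb$ --- is exactly the standard argument for this class. For part (ii) you take a genuinely different route from the one underlying the cited source: there, the subanalytic case is obtained from the Bolte--Daniilidis--Lewis result that tame sets/functions are semismooth, whose proof runs through a Whitney $(a)$-regular stratification (normals are orthogonal to the stratum through the base point, and the Whitney condition controls their limits), whereas you argue by curve selection. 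Your key mechanism --- $\pm\gamma'(t)\in T_A(\gamma(t))$ forces $\skalp{\gamma^*(t),\gamma'(t)}=0$, so $g(t)=\skalp{\gamma^*(t),\gamma(t)-\xb}$ vanishes to order $p+1$ while membership in $S$ forces order $p$ --- is correct and is an attractive, more self-contained alternative; the stratification route buys uniformity over all Clarke normals at once without having to discuss the normal-cone graph.

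The one point you should not leave as an assertion is the subanalyticity of $\gph\widehat N_A$. This is true but is the technical heart of your part (ii): the regular normal cone is defined by an $\forall\exists\forall$ condition, and subanalytic sets are not closed under complementation or quantification in general. The fix is to restrict to a bounded neighborhood of $(\xb,\hat x^*)$ (harmless, since $\norm{x^*}=1$ and $x\to\xb$), where subanalytic sets are globally subanalytic and hence definable in the o-minimal structure $\R_{\rm an}$, which \emph{is} closed under first-order definability; this step deserves an explicit reference (Gabrielov's complement theorem / o-minimality of $\R_{\rm an}$) rather than the phrase ``standard fact''. A second, minor, item: the curve selection lemma must be quoted in the form that delivers an arc real-analytic at $t=0$ (not merely continuous there), since both the power-series expansion $\gamma(t)-\xb=t^pv+\OO(t^{p+1})$ and the boundedness of $(\gamma^*)'$ near $0$ rely on analyticity at the endpoint. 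With those two citations supplied, the argument is complete.
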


For single-valued functions there holds the following result.

\begin{proposition}[{\cite[Proposition 3.7]{GfrOut21}}]\label{PropNewtondiff}
Assume that $F:\R^n\to \R^m$ is a single-valued mapping
which is Lipschitz near $\xb$. Then the following two statements are equivalent.
\begin{enumerate}
  \item[(i)] $F$ is \ssstar at $\xb$.
  \item[(ii)] For every $\epsilon>0$ there is some $\delta>0$ such that
  \begin{equation*}
    \norm{F(x)-F(\xb)-C(x-\xb)}\leq\epsilon\norm{x-\xb}\ \forall x\in \B_\delta(\xb)\ \forall
    C\in\co \overline\nabla F(x),
  \end{equation*}
  i.e.,
  \begin{equation}\label{EqCharSemiSmoothLipsch1}
    \sup_{C\in \co\overline\nabla F(x)}\norm{F(x)-F(\xb)-C(x-\xb)}=\oo(\norm{x-\xb}).
  \end{equation}
\end{enumerate}
\end{proposition}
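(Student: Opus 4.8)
The plan is to prove the two implications separately. Throughout, fix a Lipschitz modulus $\kappa$ of $F$ on a ball $\B_r(\xb)$, so that $\norm{\nabla F(x)}\le\kappa$ on $\OO_F\cap\B_r(\xb)$ and $\norm{F(x)-F(\xb)}\le\kappa\norm{x-\xb}$ on $\B_r(\xb)$, put $\yb:=F(\xb)$, and recall that $F$ is \ssstar at $\xb$ precisely when $\gph F$ is \ssstar at $(\xb,\yb)$.

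For the implication (i)$\Rightarrow$(ii) I would propagate the estimate \eqref{EqCharSemiSmoothLim} through three layers. Given $\epsilon>0$, take $\delta\in(0,r]$ from \eqref{EqCharSemiSmoothLim}. First, at a point $x\in\OO_F$ with $\norm{x-\xb}<\delta/\sqrt{1+\kappa^2}$ the pair $(x,F(x))$ lies in $\gph F\cap\B_\delta(\xb,\yb)$ and, by differentiability, $(v^*,\nabla F(x)^Tv^*)\in\gph D^*F(x,F(x))$ for every $v^*$; inserting this pair into \eqref{EqCharSemiSmoothLim}, dividing by $\norm{v^*}$, using $\norm{(v^*,\nabla F(x)^Tv^*)}\le\sqrt{1+\kappa^2}\,\norm{v^*}$ and $\norm{(x,F(x))-(\xb,\yb)}\le\sqrt{1+\kappa^2}\,\norm{x-\xb}$, and taking the supremum over unit $v^*$ yields $\norm{F(x)-F(\xb)-\nabla F(x)(x-\xb)}\le\epsilon(1+\kappa^2)\norm{x-\xb}$. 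Second, for an arbitrary $x$ near $\xb$ and any $A\in\overline\nabla F(x)$, choosing $\OO_F\ni x_j\to x$ with $\nabla F(x_j)\to A$ and passing to the limit (continuity of $F$) transfers the bound to $A$ in place of $\nabla F(x)$. Third, writing $C=\sum_i\lambda_iA_i\in\co\overline\nabla F(x)$ with $\lambda_i\ge0$, $\sum_i\lambda_i=1$, $A_i\in\overline\nabla F(x)$, the identity
\[F(x)-F(\xb)-C(x-\xb)=\sum_i\lambda_i\big(F(x)-F(\xb)-A_i(x-\xb)\big)\]
together with the triangle inequality gives the same bound for $C$. Since $\epsilon>0$ is arbitrary, \eqref{EqCharSemiSmoothLipsch1} follows.

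For (ii)$\Rightarrow$(i), given $\epsilon>0$ take $\delta\in(0,r]$ from (ii). Let $(x,y)\in\gph F\cap\B_\delta(\xb,\yb)$ (so $y=F(x)$ and $\norm{x-\xb}<\delta$) and $(y^*,x^*)\in\gph D^*F(x,y)$, i.e.\ $x^*\in D^*F(x)(y^*)$. Here I would invoke the coderivative scalarization for the Lipschitzian map $F$ — which identifies $D^*F(x)(y^*)$ with $\partial\langle y^*,F\rangle(x)$ — together with \cite[Theorem 9.61]{RoWe98} and Clarke's subdifferential chain rule to conclude $D^*F(x)(y^*)\subseteq\{C^Ty^*\mv C\in\co\overline\nabla F(x)\}$, so that $x^*=C^Ty^*$ for some $C\in\co\overline\nabla F(x)$. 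Then $\skalp{x^*,x-\xb}-\skalp{y^*,y-\yb}=\skalp{y^*,C(x-\xb)-(F(x)-F(\xb))}$, whence by (ii) we get $\big|\skalp{x^*,x-\xb}-\skalp{y^*,y-\yb}\big|\le\norm{y^*}\,\epsilon\,\norm{x-\xb}\le\epsilon\,\norm{(y^*,x^*)}\,\norm{(x,y)-(\xb,\yb)}$, which is \eqref{EqCharSemiSmoothLim}.

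The only genuinely non-elementary ingredient is the upper bound $D^*F(x)(y^*)\subseteq\{C^Ty^*\mv C\in\co\overline\nabla F(x)\}$ used in (ii)$\Rightarrow$(i); by contrast, the lower inclusion $\nabla F(x)^Tv^*\in D^*F(x)(v^*)$ at points of differentiability, which is all that (i)$\Rightarrow$(ii) requires, is immediate from the excerpt. The remaining steps are routine limiting arguments, convex combinations and Lipschitz estimates, so I expect the coderivative scalarization step to be the main point that needs to be pinned down carefully.
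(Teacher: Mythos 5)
The paper does not prove this proposition; it is quoted verbatim from \cite[Proposition 3.7]{GfrOut21} with no argument supplied. Your proof is correct and follows the standard route of the cited source: for (i)$\Rightarrow$(ii) you feed the pairs $(v^*,\nabla F(x)^Tv^*)$ at differentiability points into \eqref{EqCharSemiSmoothLim} and then pass to $\overline\nabla F(x)$ by closure and to $\co\overline\nabla F(x)$ by convex combinations, and for (ii)$\Rightarrow$(i) you correctly identify the one nontrivial ingredient, namely Mordukhovich's scalarization $D^*F(x)(y^*)=\partial\langle y^*,F\rangle(x)$ for locally Lipschitz $F$ combined with $\partial\langle y^*,F\rangle(x)\subseteq\{C^Ty^*\mv C\in\co\overline\nabla F(x)\}$; all constants and neighborhood adjustments are handled properly.
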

In the following definition of the SCD-\ssstar property we replace $\gph D^*F(x,y)$ in the relation \eqref{EqCharSemiSmoothLim}  by the smaller set given by the union of all subspaces $L^*\in\Sp^*F(x,y)$.

\begin{definition}\label{DefSSStar}
  We say that the mapping $F:\R^n\tto\R^m$ is SCD-\ssstar  at   $(\xb,\yb)\in\gph F$ if it has the SCD property around $(\xb,\yb)$ and for every $\epsilon>0$ there is some $\delta>0$ such that
\begin{align*}
\label{EqCharSCDSemiSmoothLim}&\lefteqn{\vert \skalp{x^*,x-\xb}-\skalp{y^*,y-\yb}\vert}\\
&\nonumber\qquad\leq \epsilon
\norm{(x,y)-(\xb,\yb)}\norm{(y^*,x^*)}\ \forall(x,y)\in \gph F\cap\B_\delta(\xb,\yb)\mbox{ and } \forall (y^*,x^*)\in L^*,\ L^*\in\Sp^*F(x,y).
\end{align*}
\end{definition}
======================================================}\fi

\subsection{On $\Psi$-semismooth single-valued mappings\label{Subsec2_6}}

The following definition is closely related to the definitions of Newton maps \cite{Kum00}, G-semismoothness \cite{Gow04}, Newton (slant) differentiability \cite{ChNaQui00, HiItKu03} and semismooth operators in Banach spaces; cf.\ \cite[Definition 3.1]{Ulb11}.
\begin{definition}[{\cite[Definition 3.1]{GfrOut26a}}]
  Consider the mapping $F:\Omega\to\R^m$, where $\Omega\subset\R^n$ is open, and the multifunction $\Psi:\R^n\tto\R^{m\times n}$.
  \begin{enumerate}
  \item We say that $F$ is {\em semismooth  with respect to $\Psi$} (shortly $\Psi$-ss) at $\xb\in\Omega$ if $\dom \Psi$ is a neighborhood of $\xb$, $\Psi$ is locally bounded at $\xb$ and
    \begin{equation*}
      \lim_{\AT{x\setto{\Omega} \xb}{x\not=\xb}}\frac{\sup\{\norm{F(x)-F(\xb)-A(x-\xb)}\mv A\in\Psi(x)\}}{\norm{x-\xb}}=0.
    \end{equation*}
    \item We say that $F$ is  $\Psi$-ss  \emph{on a subset} $U\subset\Omega$ if $F$ is $\Psi$-ss at every $x\in U$.
    The mapping $\Psi$ is called a semismooth derivative of $F$ on $U$.
    \end{enumerate}
\end{definition}
Whereas in the context of solving nonsmooth equations $F(x)=0$ by semismooth Newton methods the semismooth property of $F$ at a solution $\xb$ is important, in this paper we are interested in the $\Psi$ -semismoothness of $F$ on an open set. We refer the interested reader to \cite{GfrOut26a} for a discussion on this subject.

Clearly, continuously differentiable mappings $F$ are $\nabla F$-ss. Moreover, $\Psi$-ss mappings obey the following chain rule.

\begin{lemma}[{\cite[Lemma 3.4]{GfrOut26a}}]\label{LemChainRule}
   Consider the  mappings $F_1:U\to\R^p$, $F_2:V\to\R^m$, where $U\subset\R^n$ and $V\subset\R^p$ are open sets with $F_1(U)\subset V$. Assume that $F_1$ is $\Psi_1$-ss  at $\xb\in U$ and that $F_2$ is $\Psi_2$-ss at $F_1(\xb)$, where $\Psi_1:\R^n\tto\R^{p\times n}$, $\Psi_2:\R^p\tto\R^{m\times p}$. Then  the composite mapping $F:U\to\R^m$ given by  $F(x):=F_2(F_1(x))$ is $\Psi$-ss  at $\xb$ with  $\Psi$ given by
  \[\Psi(x):=\{BA\mv A\in\Psi_1(x),\ B\in \Psi_2(F_1(x))\}.\]
\end{lemma}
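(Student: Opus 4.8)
The claim is the chain rule for $\Psi$-semismoothness: if $F_1$ is $\Psi_1$-ss at $\xb$ and $F_2$ is $\Psi_2$-ss at $\yb:=F_1(\xb)$, then $F:=F_2\circ F_1$ is $\Psi$-ss at $\xb$ with $\Psi(x)=\{BA\mv A\in\Psi_1(x),\ B\in\Psi_2(F_1(x))\}$. First I would record the bookkeeping needed to even state that $\Psi$ is a legitimate candidate: since $\dom\Psi_1$ is a neighborhood of $\xb$ and $\dom\Psi_2$ is a neighborhood of $\yb$, and $F_1$ is continuous at $\xb$ (being semismooth with respect to a locally bounded $\Psi_1$ forces $F_1$ to be calm, hence continuous, at $\xb$), one gets that $\dom\Psi=\{x\in\dom\Psi_1\mv F_1(x)\in\dom\Psi_2\}$ is a neighborhood of $\xb$; and local boundedness of $\Psi_1$ near $\xb$ together with local boundedness of $\Psi_2$ near $\yb$ (again using continuity of $F_1$) yields local boundedness of $\Psi$ near $\xb$, say $\norm{A}\le M_1$, $\norm{B}\le M_2$ for all relevant $x$, so $\norm{BA}\le M_1M_2$.

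The core estimate is the standard ``add and subtract'' splitting. Fix $\epsilon>0$ and let $A\in\Psi_1(x)$, $B\in\Psi_2(F_1(x))$ be arbitrary, so $BA$ is a generic element of $\Psi(x)$. Write
\begin{align*}
F(x)-F(\xb)-BA(x-\xb)
&= \big(F_2(F_1(x))-F_2(\yb)-B(F_1(x)-\yb)\big)\\
&\quad + B\big(F_1(x)-F_1(\xb)-A(x-\xb)\big).
\end{align*}
The second term is bounded in norm by $M_2\,\sup\{\norm{F_1(x)-F_1(\xb)-A'(x-\xb)}\mv A'\in\Psi_1(x)\}$, which is $\oo(\norm{x-\xb})$ by the $\Psi_1$-ss hypothesis. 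For the first term, note $B\in\Psi_2(F_1(x))$ and the argument at which we evaluate $F_2$ is $F_1(x)$; the $\Psi_2$-ss property at $\yb$ gives $\sup\{\norm{F_2(y)-F_2(\yb)-B'(y-\yb)}\mv B'\in\Psi_2(y)\}=\oo(\norm{y-\yb})$ as $y\to\yb$, and in particular applied along $y=F_1(x)$ it bounds the first term by $\oo(\norm{F_1(x)-\yb})$. So it remains to control $\norm{F_1(x)-\yb}=\norm{F_1(x)-F_1(\xb)}$ in terms of $\norm{x-\xb}$: from the $\Psi_1$-ss estimate, $\norm{F_1(x)-F_1(\xb)}\le \norm{F_1(x)-F_1(\xb)-A'(x-\xb)}+\norm{A'}\norm{x-\xb}\le \oo(\norm{x-\xb})+M_1\norm{x-\xb}=O(\norm{x-\xb})$. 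Hence $\oo(\norm{F_1(x)-\yb})=\oo(\norm{x-\xb})$, and summing the two terms (and taking the supremum over all $A,B$, which is legitimate since the bounds obtained are uniform in the choices) gives $\sup\{\norm{F(x)-F(\xb)-C(x-\xb)}\mv C\in\Psi(x)\}=\oo(\norm{x-\xb})$, as required.

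The only genuinely delicate point is the composition of the little-$\oo$ terms in the first summand: the $\Psi_2$-ss hypothesis is a statement about $y\to\yb$ in $V$, but here $y=F_1(x)$ moves only along the (possibly complicated) image of a neighborhood of $\xb$, so one must be careful to invoke the limit definition correctly — i.e. choose $\delta_2>0$ so that $\norm{F_2(y)-F_2(\yb)-B'(y-\yb)}\le \epsilon'\norm{y-\yb}$ for all $y\in V\cap\B_{\delta_2}(\yb)$ and $B'\in\Psi_2(y)$, and then shrink the neighborhood of $\xb$ using continuity of $F_1$ so that $F_1(x)\in\B_{\delta_2}(\yb)$. With the linear growth bound $\norm{F_1(x)-\yb}\le (M_1+1)\norm{x-\xb}$ valid near $\xb$, the composite bound $\epsilon'(M_1+1)\norm{x-\xb}$ is achieved, and since $\epsilon'$ is arbitrary this term is indeed $\oo(\norm{x-\xb})$. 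Everything else is routine; I would present the argument as a single $\epsilon$–$\delta$ chase, choosing at the outset $\delta$ small enough to simultaneously guarantee $x\in\dom\Psi$, the boundedness constants $M_1,M_2$, and $F_1(x)\in\B_{\delta_2}(\yb)$.
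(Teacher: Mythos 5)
Your argument is correct. Note, however, that the paper itself gives no proof of this lemma --- it is imported verbatim from \cite[Lemma 3.4]{GfrOut24a} --- so there is nothing in this text to compare against; your telescoping decomposition
$F(x)-F(\xb)-BA(x-\xb)=\bigl(F_2(F_1(x))-F_2(\yb)-B(F_1(x)-\yb)\bigr)+B\bigl(F_1(x)-F_1(\xb)-A(x-\xb)\bigr)$
is the standard route for such a chain rule, and you handle the genuinely delicate points correctly: deriving calmness (hence one-point continuity) of $F_1$ at $\xb$ from the $\Psi_1$-ss estimate so that $\dom\Psi$ is a neighborhood of $\xb$ and $F_1(x)$ lands in the region where the $\Psi_2$-estimate and the bound $\norm{B}\leq M_2$ apply, keeping the bounds uniform in $A$ and $B$ before passing to the supremum, and converting $\oo(\norm{F_1(x)-\yb})$ into $\oo(\norm{x-\xb})$ via the linear growth bound $\norm{F_1(x)-\yb}\leq (M_1+1)\norm{x-\xb}$.
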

One can also show that for given mappings $F_i:U\to\R^n$ being $\Psi_i$-ss, $i=1,2$, their sum $F_1+F_2$ is $(\Psi_1+\Psi_2)$-ss, cf. \cite[Lemma 3.5]{GfrOut26a}.

The next auxiliary statement is important in the application we have in mind.
\begin{lemma}[{\cite[Lemma 3.6]{GfrOut26a}}]\label{LemNorkin}Let $F:\Omega\to\R^m$, $\Omega\subset\R^n$ open, be continuous and assume that $F$ is $\Psi$-ss at $\xb\in\Omega$. Then $F$ is also $\cocl\Psi$-ss at $\xb$.
\end{lemma}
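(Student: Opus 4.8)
The plan is to verify, one at a time, the three requirements in the definition of $\cocl\Psi$-semismoothness at $\xb$: that $\dom(\cocl\Psi)$ is a neighbourhood of $\xb$, that $\cocl\Psi$ is locally bounded at $\xb$, and the asymptotic estimate. The first two are immediate; the last is the only one needing work.

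For the first requirement I would simply note that $\gph\Psi\subseteq\gph(\cl\Psi)$ forces $\dom\Psi\subseteq\dom(\cl\Psi)=\dom(\cocl\Psi)$, so $\dom(\cocl\Psi)$ contains the neighbourhood $\dom\Psi$ of $\xb$ and hence is itself one. For the second requirement I would invoke Lemma~\ref{LemCocl} directly: since $\Psi$ is locally bounded at $\xb$, so are $\cl\Psi$ and $\cocl\Psi$, and moreover $(\cocl\Psi)(x)$ is convex and compact for $x$ near $\xb$.

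The core is the estimate. Fix $\eta>0$. Using that $F$ is $\Psi$-ss at $\xb$, choose $\delta>0$, shrunk if necessary so that $\B_\delta(\xb)\subseteq\dom\Psi$, with
\[\norm{F(x)-F(\xb)-A(x-\xb)}\le\eta\,\norm{x-\xb}\qquad\text{for all }x\in\B_\delta(\xb)\setminus\{\xb\}\text{ and all }A\in\Psi(x).\]
Given $x\in\B_\delta(\xb)$, $x\ne\xb$, and $A\in(\cl\Psi)(x)$, the definition of the graph closure supplies a sequence $(x_k,A_k)\in\gph\Psi$ with $(x_k,A_k)\to(x,A)$; for all large $k$ one has $x_k\in\B_\delta(\xb)$ and $x_k\ne\xb$, so the displayed inequality applies to $(x_k,A_k)$, and passing to the limit — here the \emph{continuity of $F$} is what gives $F(x_k)\to F(x)$, while $A_k(x_k-\xb)\to A(x-\xb)$ is automatic — yields the same inequality for $(x,A)$. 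Finally, any $A\in(\cocl\Psi)(x)=\co\!\big((\cl\Psi)(x)\big)$ is a convex combination $\sum_i\lambda_iA_i$ of elements $A_i\in(\cl\Psi)(x)$, and writing $F(x)-F(\xb)-A(x-\xb)=\sum_i\lambda_i\big(F(x)-F(\xb)-A_i(x-\xb)\big)$ and applying the triangle inequality transfers the bound to $A$. Hence $\sup\{\norm{F(x)-F(\xb)-A(x-\xb)}\mv A\in(\cocl\Psi)(x)\}\le\eta\,\norm{x-\xb}$ throughout $\B_\delta(\xb)\setminus\{\xb\}$; since $\eta>0$ was arbitrary, the required quotient tends to $0$.

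I expect the only genuine obstacle to be the bookkeeping around the graph closure: elements of $(\cl\Psi)(x)$ are in general limits of matrices $A_k\in\Psi(x_k)$ with $x_k\to x$ rather than $x_k\equiv x$, so the estimate cannot be read off from $\Psi(x)$ alone — it is precisely at this passage to the limit that continuity of $F$ is used (mere local boundedness of $\Psi$ would not suffice). The convexification step is then a one-line use of the triangle inequality, reflecting the fact that $A\mapsto\norm{F(x)-F(\xb)-A(x-\xb)}$ is convex, so its supremum over $(\cocl\Psi)(x)$ coincides with that over $(\cl\Psi)(x)$.
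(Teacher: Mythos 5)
The paper itself gives no proof of this lemma --- it is imported verbatim as \cite[Lemma 3.6]{GfrOut24a} --- so there is nothing internal to compare against; judged on its own, your argument is correct and is the natural one. The two delicate points are exactly the ones you flag and handle properly: an element of $(\cl\Psi)(x)$ is a limit of $A_k\in\Psi(x_k)$ along $x_k\to x$ (not $x_k\equiv x$), so continuity of $F$ is genuinely needed to pass the estimate to the limit, and the extension from $\cl\Psi$ to $\cocl\Psi$ is a one-line consequence of the convexity of $A\mapsto\norm{F(x)-F(\xb)-A(x-\xb)}$.
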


Consider now the following definition.
\begin{definition}[{\cite{No78,No80}}]\label{DefNorkin}
  A function $f:\R^n\to\R$ is called {\em generalized differentiable}  if there exists an usc  mapping $G:\R^n\tto\R^n$ with nonempty, convex and compact values $G(x)$ such that
  for every $\xb\in\R^n$ there holds
    \[\sup_{g\in G(x)}\vert f(x)-f(\xb)-\skalp{g,x-\xb}\vert=\oo(\norm{x-\xb}).\]
  The elements of $G(x)$ are called {\em pseudogradients} in \cite{MiGuNo87} and we will call them {\em pseudosubgradients}.
\end{definition}
By  Lemma \ref{LemNorkin}, a $\Psi$-ss function $f:\R^n\to\R$ is also generalized differentiable with
\[G(x)=\{g\in\R^n\mv g^T\in(\cocl\Psi)(x)\}.\]
Note that pseudosubgradients are column vectors whereas the elements of $\Psi$ are $1\times n$ matrices, i.e., row vectors.

 Generalized differentiable functions have many interesting properties, cf.\ \cite{No78,No80,MiGuNo87}, which can be carried over to $\Psi$-semismooth functions $f:\Omega\to\R$, $\Omega\subset\R^n$ open. Some of these results can be generalized to vector-valued mappings.
\begin{proposition}[{\cite[Proposition 3.9]{GfrOut26a}}]\label{PropNorkinR_m}
  Let $\Omega\subset\R^n$ be open and assume that $F:\Omega\to\R^m$ is $\Psi$-ss on $\Omega$. Then the following statements hold:
  \begin{enumerate}
    \item[(i)] $F$ is locally Lipschitz on $\Omega$.
    \item[(ii)] For every $x\in\Omega$ there holds $\emptyset\not=\co\overline \nabla F(x)\subset (\cocl\Psi)(x)$.
    \item[(iii)] Let $\Omega_\Psi$ denote the set of all $x\in\Omega$ such that $(\cocl\Psi)(x)$ is a singleton. Then $\Omega\setminus\Omega_\Psi$ has Lebesgue measure zero. Moreover,  the mapping $F$ is strictly differentiable at every $x\in \Omega_\Psi$ with $\{\nabla F(x)\}=(\cocl\Psi)(x)$ and the mapping $x\mapsto\nabla F(x)$ is continuous on $\Omega_\Psi$.
  \end{enumerate}
\end{proposition}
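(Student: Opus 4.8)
\emph{Sketch of a possible proof.}

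\textbf{(i).} Since $F$ is $\Psi$-ss at every point of $\Omega$, the mapping $\Psi$ is locally bounded, $\dom\Psi\supseteq\Omega$, and $\Psi(x)\neq\emptyset$ for $x\in\Omega$. Inserting any $A\in\Psi(x)$ into the defining limit already gives $\norm{F(x)-F(\xb)}\leq(\norm{A}+\oo(1))\norm{x-\xb}$, so $F$ is continuous on $\Omega$ and, on any ball $\B_r(\xb)\subseteq\Omega$ on which $\norm{A}\leq M$ for all $A\in\Psi(x)$, $x\in\B_r(\xb)$, one has $\limsup_{x\to y}\norm{F(x)-F(y)}/\norm{x-y}\leq M$ for every $y\in\B_r(\xb)$. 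I would then promote this pointwise Lipschitz estimate to a genuine one: for $x,x'\in\B_r(\xb)$ and a unit $v\in\R^m$, the scalar function $t\mapsto\skalp{v,F(x'+t(x-x'))}$ on $[0,1]$ is continuous and pointwise Lipschitz with constant $M\norm{x-x'}$, and a continuous function on an interval that is pointwise Lipschitz with constant $c$ at every point is $c$-Lipschitz; taking the supremum over $v$ yields $\norm{F(x)-F(x')}\leq M\norm{x-x'}$.

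\textbf{Reduction and two observations.} For (ii), (iii) I would replace $\Psi$ by $\Phi:=\cocl\Psi$: by Lemma~\ref{LemNorkin} $F$ is $\Phi$-ss on $\Omega$, and by Lemma~\ref{LemCocl} $\Phi$ is outer semicontinuous, upper semicontinuous and locally bounded, with nonempty convex compact values. Two facts will be used repeatedly. (A) At any $z\in\OO_F$ and any direction $d$, writing $F(z+td)-F(z)=B(z+td)\,td+\oo(t)$ with $B(z+td)\in\Phi(z+td)$, local boundedness and outer semicontinuity of $\Phi$ give, along $t\downarrow0$, some $B_d\in\Phi(z)$ with $\nabla F(z)d=B_dd$; in particular, if $\Phi(z)$ is a singleton $\{A\}$ then $\nabla F(z)=A\in\Phi(z)$. (B) By (i), $F$ is locally Lipschitz, so by Rademacher's theorem $\OO_F$ has full measure.

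\textbf{The null set in (iii) (the crux).} Because the $\Phi$-ss estimate controls the defect $\nabla F(z)-B$ only \emph{along the single direction} $x-z$, a pointwise argument cannot force $\Phi(z)$ to be a singleton; my plan is to slice $F$ to lines. For a unit $v$ and $w\in v^\perp$, $g(t):=F(w+tv)$ is Lipschitz and, directly from the definition of $\Phi$-ss, semismooth with respect to $\Theta(t):=\{Bv\mv B\in\Phi(w+tv)\}\subseteq\R^m$, which is again outer semicontinuous with compact values. For this one-dimensional domain the argument is short: at a differentiability point $t_0$ of $g$ the $\Theta$-ss estimate forces $\sup_{B\in\Theta(t)}\norm{B-g'(t_0)}\to0$ as $t\to t_0$, so outer semicontinuity gives $\Theta(t_0)=\{g'(t_0)\}$; since $g$ is differentiable a.e., $\Theta(t)$ is a singleton for a.e.\ $t$. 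Hence, for each fixed $v$, $\{Bv\mv B\in\Phi(x)\}$ is a singleton for a.e.\ $x$ on every line with direction $v$, and by Fubini for a.e.\ $x\in\Omega$. Intersecting over a countable dense set of directions $v_k$ and using continuity of $v\mapsto(B_1-B_2)v$, for a.e.\ $x$ every $B_1,B_2\in\Phi(x)$ satisfy $(B_1-B_2)v=0$ for all $v$, i.e.\ $\Phi(x)=(\cocl\Psi)(x)$ is a singleton; thus $\Omega\setminus\Omega_\Psi$ is Lebesgue null.

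\textbf{Conclusion.} At a.e.\ $z$ (namely $z\in\OO_F\cap\Omega_\Psi$), (A) gives $\nabla F(z)\in\Phi(z)$. By Clarke's gradient formula, $\co\overline\nabla F(x)$ is the convex hull of the limits of $\nabla F(z)$ over such $z\to x$; since each such limit lies in $\Phi(x)$ by outer semicontinuity and $\Phi(x)$ is convex, $\emptyset\neq\co\overline\nabla F(x)\subseteq\Phi(x)=(\cocl\Psi)(x)$, which is (ii). If $x_0\in\Omega_\Psi$ with $\Phi(x_0)=\{A_0\}$, then $\emptyset\neq\overline\nabla F(x_0)\subseteq\co\overline\nabla F(x_0)\subseteq\{A_0\}$, so $\overline\nabla F(x_0)=\{A_0\}$, which for a locally Lipschitz $F$ means $F$ is strictly differentiable at $x_0$ with $\nabla F(x_0)=A_0=(\cocl\Psi)(x_0)$. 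Finally, for $x\in\Omega_\Psi$ close to $x_0$, upper semicontinuity of $\Phi$ gives $\nabla F(x)\in\Phi(x)\subseteq\{A_0\}+\epsilon\B_1(0)$, so $x\mapsto\nabla F(x)$ is continuous on $\Omega_\Psi$. The step I expect to be the genuine obstacle is the null-set claim: one must realize that the multidimensional semismoothness estimate has to be reduced to one-dimensional restrictions, and verify that this slicing preserves outer semicontinuity and the semismoothness relation; once that is done, the one-dimensional case plus Fubini finish it, and the rest is bookkeeping with Rademacher's theorem, the Clarke gradient formula, and Lemmas~\ref{LemNorkin} and~\ref{LemCocl}.
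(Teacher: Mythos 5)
The paper does not prove this proposition itself --- it imports it from \cite[Proposition~3.9]{GfrOut24a} --- so I can only judge your argument on its own merits. Parts (i), the reduction to $\Phi:=\cocl\Psi$, observation (A), the Fubini/dense-directions bookkeeping, and the final derivation of (ii) and of strict differentiability from the null-set claim are all sound. The one genuine gap is exactly at the step you yourself flag as the crux, and the justification you give there does not work: at a differentiability point $t_0$ of the sliced map $g$, the semismoothness estimate only yields $\sup_{C\in\Theta(t)}\norm{C-g'(t_0)}\to 0$ for $t\to t_0$ with $t\neq t_0$, i.e.\ it controls $\Theta$ on a \emph{punctured} neighborhood. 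Outer semicontinuity gives the inclusion $\Limsup_{t\to t_0}\Theta(t)\subseteq\Theta(t_0)$, which is the wrong direction; what you would need is $\Theta(t_0)\subseteq\Limsup_{t\to t_0,\,t\neq t_0}\Theta(t)$, a form of inner semicontinuity from punctured neighborhoods that is false for $\cocl\Psi$ in general (take $\Psi(t)=\{0\}$ for $t\neq 0$ and $\Psi(0)=\{0,1\}$ on $\R$: then $(\cocl\Psi)(0)=[0,1]$ while the punctured outer limit is $\{0\}$). Since $(\cl\Psi)(x_0)$ contains $\cl\Psi(x_0)$ via constant sequences, and the semismoothness condition at base points $\xb\neq x_0$ says nothing uniform about $\Psi(x_0)$ itself, no pointwise or purely topological argument can close this.

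The conclusion of that step is nevertheless true, but it requires a measure-theoretic argument rather than semicontinuity. For instance: on a line, let $N_\epsilon$ be the set of differentiability points $t$ of $g$ at which some $C\in\Theta(t)$ satisfies $\norm{C-g'(t)}\geq\epsilon$. If $\vert N_\epsilon\vert>0$, pick $t^*\in N_\epsilon$ that is simultaneously a point of density $1$ of $N_\epsilon$ and an approximate continuity point of the (measurable) map $g'$; then there exist $t_k\in N_\epsilon$, $t_k\to t^*$, $t_k\neq t^*$, with $g'(t_k)\to g'(t^*)$, and the witnesses $C_{t_k}\in\Theta(t_k)$ satisfy $\norm{C_{t_k}-g'(t^*)}\geq\epsilon/2$ eventually, contradicting the punctured-limit estimate at $t^*$. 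Hence each $N_\epsilon$ is null and $\Theta(t)=\{g'(t)\}$ a.e., after which your Fubini and dense-direction steps go through unchanged. So the architecture of your proof is salvageable, but as written the invocation of outer semicontinuity is an error, not merely an omitted detail: it conflates the two opposite semicontinuity inclusions at precisely the point where the measure-zero nature of the exceptional set has to be earned.
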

In particular, on $\Omega_\Psi$ there holds $(\cocl \Psi)=\co\onabla F(=\{\nabla F\})$ and therefore the set, where $\cocl\Psi$ differs from $\co\onabla F$, has Lebesgue measure zero. Moreover, from Proposition \ref{PropNorkinR_m}(ii) together with Lemma \ref{LemNorkin} it follows that there exists $\Psi$ such that $F$ is $\Psi$-ss on $\Omega$ if and only if $F$ is $\onabla F$-ss (or $\co\onabla F$-ss).

Consider now the situation that the mapping $F=F_l\circ F_{l-1}\circ\ldots F_1$ is the composition of various mappings $F_i:\R^{n_{i-1}}\to\R^{n_i}$, $i=1,\ldots,l$, each being semismooth with respect to $\onabla F_i$. By the chain rule (Lemma \ref{LemChainRule}) we know that $F$ is semismooth
with respect to
\[\Psi(x)=\{A_lA_{l-1}\ldots A_1\mid A_i\in\onabla F_i((F_{i-1}\circ\ldots\circ F_1)(x)),\ i=1,\ldots,l\}.\]
For numerical purposes, in many applications it suffices to compute for given $x$ the value $F(x)$ and {\em one} element $A\in\Psi(x)$.
To this end it is only necessary that for every mapping $F_i$, $i=1,\ldots,l$, we have at our disposal an oracle which returns for $x_{i-1}\in\R^{n_{i-1}}$ the value $F_i(x_{i-1})$ and one element $A_i\in\onabla F_i(x_{i-1})$. This feature is appropriate for the usage of automatic differentiation tools.
This procedure actually defines a single-valued mapping $\tilde\Psi:\R^{n_0}\to\R^{n_l\times n_0}$ with $\tilde \Psi(x)\in\Psi(x)$. Since $F$ is also $\tilde\Psi$-ss, by Proposition \ref{PropNorkinR_m}(ii) we always have that $\emptyset\not=\co\overline \nabla F(x)\subset (\cocl\tilde\Psi)(x)$, regardless which element $A_i\in\onabla F_i(x_{i-1})$ our oracle selects.

Similar considerations apply to the case when $F$ is the sum of semismooth mappings.

\subsection{On semismoothness of solution mappings to generalized equations}
In this section we consider generalized equations (GEs) of the form
\begin{equation}\label{eq-2}
0 \in F(x,y):=f(x,y)+Q(x,y),
\end{equation}
where $f:\mathbb{R}^{n} \times \mathbb{R}^{m} \rightarrow \mathbb{R}^{m}$ is continuously differentiable and $Q: \mathbb{R}^{n} \times \mathbb{R}^{m} \rightrightarrows \mathbb{R}^{m}$ has a closed graph.

For the analysis of mappings $x\mapsto y(x)$ implicitly defined by \eqref{eq-2}, the so-called {\em regular subspaces} play an important role. Let us  denote by $\Z_{m,n+m}^{\rm reg}$ the collection of all subspaces $L^*\in\Z_{m,n+m}$ satisfying
\begin{equation}\label{EqRegSubspace}(z^*,x^*,0)\in L^*\Rightarrow z^*=0,\ x^*=0.\end{equation}
By \cite[Proposition 5.6]{GfrOut26a}, for every subspace $L^*\in \Z_{m,n+m}^{\rm reg}$ there are are unique  matrices $Z_{L^*}\in\R^{m\times m}$ and $X_{L^*}\in\R^{n\times m}$ such that
$L^*=\rge(Z_{L^*}, X_{L^*},I_m)$.
It was shown in \cite[Lemma 5.9]{GfrOut26a} that for given $((x,y),z)\in\gph F$ the condition $\Sp^*F ((x,y),z)\subset \Z_{m,n+m}^{\rm reg}$ is equivalent with the boundedness of the modulus
\begin{equation*}
      {\rm scd\, reg\,}F((x,y),z):=\sup\{\norm{(z^*,x^*)}\mv (z^*,x^*,y^*)\in L^*, L^*\in \Sp^*F((x,y),z),\norm{y^*}\leq 1\}<\infty.
\end{equation*}
Using this notation, one can show the following result:
\begin{theorem}[cf. {\cite[Corollary 5.12]{GfrOut26a}}]\label{ThSCDImp}
  Consider a mapping $F:\R^n\times\R^m\tto\R^m$ and a continuous mapping $\sigma:\Omega\to\R^m$, $\Omega\subset \R^n$ open, satisfying $0\in F(x,\sigma(x))$, $x\in\Omega$. Further assume that for every $x\in\Omega$ the mapping $F$ is SCD semismooth$^*$ at $((x,\sigma(x)),0)$ and $\scdreg F((x,\sigma(x)),0)<\infty$. Then $\sigma$ is semismooth on $\Omega$ with respect to the mapping $\Psi:\R^n\tto\R^{n\times m}$ given by
  \[\Psi(x):=\begin{cases}\{-X_{L^*}^T\mv L^*\in \Sp^*F((x,\sigma(x)),0)\}&\mbox{if $x\in \Omega$},\\\emptyset&\mbox{else.}\end{cases}\]
  In particular, for every  function $\varphi:D\to \R$ defined on an open set $D\subset\R^n\times\R^m$ containing $\gph \sigma$, which is semismooth on $\gph\sigma$ with respect to some mapping $\Phi:\R^n\times\R^m\tto\R^{1\times(n+m)}$, there holds  that the function $\theta:\Omega\to \R$, $\theta(x):=\varphi(x,\sigma(x))$ is semismooth on $\Omega$ with respect to the mapping $\Theta:\R^n\tto\R^{1\times n}$ given by
  \begin{equation*}
  \Theta(x):=\begin{cases}
    \{g_x^T -g_y^T X_{L^*}^T\mv (g_x^T,g_y^T)\in\Phi(x,\sigma(x)), L^*\in\Sp^* F((x,\sigma(x)),0)\}&\mbox{if $x\in\Omega$},\\
    \emptyset&\mbox{else.}
  \end{cases}
  \end{equation*}
\end{theorem}
In order to apply this theorem to \eqref{eq-2}, observe that $\gph F =\{\big((x,y),z\big)\mv \big((x,y),z-f(x,y)\big)\in\gph Q\}$ and therefore
\begin{align*}
  \Sp^* F((x,y),f(x,y)+z)=\Big\{\left(\begin{matrix}I_m&0\\\nabla f(x,y)^T&I_{n+m}\end{matrix}\right)L^*\mv L^*\in \Sp^* Q((x,y),z)\Big\},
  \end{align*}
  by \cite[Theorem 4.1]{GfrOut23}
\if{==================================================================
In order to apply this theorem to \eqref{eq-2}, we need the following lemma.
\begin{lemma}\label{LemCalc}
  Assume that $Q$ has the SCD property at $((x,y),z)\in\gph Q$. Then $F$ has the SCD property at $((x,y),f(x,y)+z)$ and
  \begin{align*}
  \Sp F((x,y),f(x,y)+z)=\Big\{\left(\begin{matrix}I_{n+m}&0\\\nabla f(x,y)&I_m\end{matrix}\right)L\mv L\in \Sp Q((x,y),z)\Big\},\\
  \Sp^* F((x,y),f(x,y)+z)=\Big\{\left(\begin{matrix}I_m&0\\\nabla f(x,y)^T&I_{n+m}\end{matrix}\right)L^*\mv L^*\in \Sp^* Q((x,y),z)\Big\}.
  \end{align*}
\end{lemma}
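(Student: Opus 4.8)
The proof rests on the fact that $\gph F$ is the image of $\gph Q$ under the \emph{shear} map
\[
\Phi:\R^{n+m}\times\R^m\to\R^{n+m}\times\R^m,\qquad\Phi\big((x,y),z\big):=\big((x,y),\,f(x,y)+z\big),
\]
which is a $C^1$-diffeomorphism, with inverse $\big((x,y),w\big)\mapsto\big((x,y),\,w-f(x,y)\big)$, whose Jacobian at any point $\big((x,y),z\big)$ equals the nonsingular $(n+2m)\times(n+2m)$ matrix $M(x,y):=\left(\begin{smallmatrix}I_{n+m}&0\\\nabla f(x,y)&I_m\end{smallmatrix}\right)$, not depending on $z$. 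The first step is therefore to invoke the standard transformation rule for the contingent cone under a $C^1$-diffeomorphism, $T_{\gph F}(\Phi(w))=\nabla\Phi(w)\,T_{\gph Q}(w)$ for $w\in\gph Q$. Since $\nabla\Phi(w)=M(x,y)$ is a linear isomorphism, it sends $(n+m)$-dimensional subspaces onto $(n+m)$-dimensional subspaces and maps cones that are not subspaces onto cones that are not subspaces; hence $\OO_F=\Phi(\OO_Q)$, and for $w=\big((x,y),z\big)\in\OO_Q$ the corresponding graphically smooth tangent spaces satisfy $T_{\gph F}(\Phi(w))=M(x,y)\,T_{\gph Q}(w)$.

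The second step is to push this correspondence through the limit defining the SC derivative. Fix $w^0:=\big((x,y),z\big)\in\gph Q$ with $\Sp Q(w^0)\neq\emptyset$, which is the assumed SCD property of $Q$ at $w^0$, and set $M^0:=M(x,y)$. Given $L^0\in\Sp Q(w^0)$ with a defining sequence $w^k\to w^0$, $w^k\in\OO_Q$, $T_{\gph Q}(w^k)\to L^0$ in $\Z_{n+m,m}$, continuity of $\nabla f$ gives $M(x_k,y_k)\to M^0$, and continuity of the action $(A,L)\mapsto AL$ of nonsingular matrices on $\Z_{n+m,m}$ (easily verified from the projection-based metric, since full column rank is preserved) yields $T_{\gph F}(\Phi(w^k))=M(x_k,y_k)\,T_{\gph Q}(w^k)\to M^0L^0$; as $\Phi(w^k)\to\Phi(w^0)$ along $\OO_F$, this shows $M^0L^0\in\Sp F(\Phi(w^0))$. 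For the opposite inclusion one starts from $L'\in\Sp F(\Phi(w^0))$ with a defining sequence in $\OO_F$, pulls it back via $\Phi^{-1}$ to a sequence in $\OO_Q$, uses the sequential compactness of $\Z_{n+m,m}$ to extract a subsequence of the associated tangent spaces converging to some $L^0$, which then belongs to $\Sp Q(w^0)$, and infers $L'=M^0L^0$ by the same continuity. This proves the first displayed formula and, in particular, $\Sp F(\Phi(w^0))\neq\emptyset$, i.e.\ $F$ has the SCD property at $\big((x,y),f(x,y)+z\big)$.

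It remains to transcribe the formula for $\Sp^*$. By definition $\Sp^*F(\Phi(w^0))=\{(M^0L)^*\mv L\in\Sp Q(w^0)\}$, so it suffices to show that for every $L\in\Z_{n+m,m}$ one has $(M^0L)^*=\left(\begin{smallmatrix}I_m&0\\\nabla f(x,y)^T&I_{n+m}\end{smallmatrix}\right)L^*$. Using $(AL)^\perp=A^{-T}L^\perp$ for nonsingular $A$ together with $L^*=S_{n+m,m}L^\perp$, one gets $(AL)^*=\big(S_{n+m,m}A^{-T}S_{n+m,m}^{-1}\big)L^*$; substituting $A=M^0=\left(\begin{smallmatrix}I_{n+m}&0\\\nabla f(x,y)&I_m\end{smallmatrix}\right)$ and carrying out the block multiplication gives $S_{n+m,m}(M^0)^{-T}S_{n+m,m}^{-1}=\left(\begin{smallmatrix}I_m&0\\\nabla f(x,y)^T&I_{n+m}\end{smallmatrix}\right)$, which is exactly the matrix in the claim. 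Combined with the equivalence $L^*\in\Sp^*Q(w^0)\Leftrightarrow L\in\Sp Q(w^0)$, this completes the argument.

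\textbf{Main obstacle.} The only genuinely technical point is the interchange of the limit with the matrix action in the second step, i.e.\ the joint continuity of $(A,L)\mapsto AL$ on $\{\text{nonsingular }(n+2m)\times(n+2m)\text{ matrices}\}\times\Z_{n+m,m}$. If no ready reference is at hand, it can be established by choosing orthonormal basis matrices of the $L_k$ that converge together with the $A_k$ and invoking the stability of the QR factorization, which is available precisely because the images of these bases under the $A_k$ retain full column rank. Everything else is routine once the shear $\Phi$ is in place; in fact the argument is the exact analogue of the way additive perturbations are handled in the SC-derivative calculus of \cite{GfrOut22a,GfrOut23}.
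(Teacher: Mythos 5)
Your proof is correct. The paper itself does not argue the lemma directly: its entire proof is a citation of \cite[Proposition 2.15]{GfrOut24a}, which is precisely the calculus rule for adding a $C^1$ single-valued map to an SCD multifunction. What you have done is reconstruct that external result from first principles, and your reconstruction is sound: the identification $\gph F=\Phi(\gph Q)$ under the shear $\Phi$, the tangent-cone transformation rule for a $C^1$ diffeomorphism, the preservation of $\OO$-points under the linear isomorphism $M(x,y)$, the passage to the limit in the definition of $\Sp$, and the computation $S_{n+m,m}(M^0)^{-T}S_{n+m,m}^{-1}=\left(\begin{smallmatrix}I_m&0\\\nabla f(x,y)^T&I_{n+m}\end{smallmatrix}\right)$ all check out. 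The one technical ingredient you flag as the main obstacle --- joint continuity of $(A,L)\mapsto AL$ on nonsingular matrices times $\Z_{n+m,m}$ --- is not something you need to prove ad hoc: it is exactly \cite[Lemma 3.1]{GfrOut22a}, which this paper itself invokes later (in the proof of Proposition \ref{PropTangIneq}, see \eqref{EqReprW}) for the very same purpose of passing a converging matrix factor through a limit of subspaces. So the comparison is: the paper buys brevity by outsourcing the whole argument to the companion paper, while your version is self-contained modulo that one continuity lemma and the compactness of $\Z_{nm}$ (Lemma \ref{LemBasic}), both of which are already in the paper's toolbox; nothing in your route is wasted or circular.
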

\begin{proof}
  Follows from \cite[Proposition 2.15]{GfrOut26a}.
\end{proof}
=========================================================================}\fi
By considering basis representations $L^*=\rge(Z^*,X^*,Y^*)$ for elements of  $\Sp^*Q((x,y),z)$  with matrices $Z^*,Y^*\in \R^{m\times m}$ and $X^*\in\R^{n\times m}$, we arrive at the formula
\begin{align}\nonumber&\Sp^* F((x,y),f(x,y)+z)=\Big\{\rge(Z^*,\nabla_x f(x,y)^TZ^*+X^*,\nabla_y f(x,y)^TZ^*+Y^*)\mv \\
\label{EqBasF}&\qquad\qquad\qquad\qquad\rge(Z^*,X^*,Y^*)\in \Sp^*Q((x,y),z), Z^*,Y^*\in \R^{m\times m},\ X^*\in\R^{n\times m}\Big\}.
\end{align}
We conclude from this formula that a subspace $L^*=\rge(Z^*,\nabla_x f(x,y)^TZ^*+X^*,\nabla_y f(x,y)^TZ^*+Y^*)\in \Sp^*F((x,y),z)$ belongs to $\Z_{m,n+m}^{\rm reg}$ if and only if the $m\times m$ matrix $\nabla_y f(x,y)^TZ^*+Y^*$ is nonsingular. In this case there holds $X_{L^*}= (\nabla_x f(x,y)^TZ^*+X^*)(\nabla_y f(x,y)^TZ^*+Y^*)^{-1}$ and thus Theorem \ref{ThSCDImp} reads as follows.
\begin{corollary}\label{CorSSImplMap}
    Consider the mapping $F:\R^n\times\R^m\tto\R^m$ given by \eqref{eq-2} and a continuous mapping $\sigma:\Omega\to\R^m$, $\Omega\subset \R^n$ open, satisfying $0\in F(x,\sigma(x))$, $x\in\Omega$. Further assume that for every $x\in\Omega$ the mapping $Q$ is SCD semismooth$^*$ at $((x,\sigma(x)),-f(x,\sigma(x)))$ and for every subspace $\rge(Z^*,X^*,Y^*)\in\Sp^*Q((x,\sigma(x)),-f(x,\sigma(x)))$, where $Z^*,Y^*\in \R^{m\times m}$ and $X^*\in\R^{n\times m}$, the $m\times m$ matrix $\nabla_y f(x,\sigma(x))^TZ^*+Y^*$ is nonsingular.

    Then $\sigma$ is semismooth on $\Omega$ with respect to the mapping $\Psi:\R^n\tto\R^{m\times n}$ given by
  \[\Psi(x):=\begin{cases}\begin{array}{l}\{-(\nabla_y f(x,y)^TZ^*+Y^*)^{-T}({Z^*}^T\nabla_x f(x,y)+{X^*}^T)\mv\\
   \qquad\qquad\rge(Z^*,X^*,Y^*)\in \Sp^*Q((x,\sigma(x)),-f(x,\sigma(x)))\}\end{array}&\mbox{if $x\in \Omega$},\\\emptyset&\mbox{else.}\end{cases}\]
  In particular, for every  function $\varphi:D\to \R$ defined on an open set $D\subset\R^n\times\R^m$ containing $\gph \sigma$, which is semismooth on $\gph\sigma$ with respect to some mapping $\Phi:\R^n\times\R^m\tto\R^{1\times(n+m)}$, it holds that the function $\vartheta$, defined by  $\vartheta(x):=\varphi(x,\sigma(x))$, is semismooth on $\Omega$ with respect to the mapping $\Theta:\R^n\tto\R^{1\times n}$ given by
  \begin{equation}\label{EqTheta}
  \Theta(x):=\begin{cases}
    \{g_x^T +g_y^T A\mv (g_x^T,g_y^T)\in\Phi(x,\sigma(x)), A\in\Psi(x)\}&\mbox{if $x\in\Omega$},\\
    \emptyset&\mbox{else.}
  \end{cases}
  \end{equation}
\end{corollary}
\begin{remark}
  By Proposition \ref{PropNorkinR_m}, the semismoothness of $\sigma$ with respect to $\Psi$ ensures that $\sigma$ is locally Lipschitz on $\Omega$.
\end{remark}
Note that in case when $Q(x,y)=\tilde Q(y)$ does not depend on $x$, we have \[\Sp^*Q(x,y,z)=\{\rge (Z^*,0,Y^*)\mv \rge(Z^*,Y^*)\in\Sp^*\tilde Q(y,z)\}.\]

\section{\label{SecImP} On the ImP approach to mathematical programs with equilibrium constraints}

This section addresses the specific case of model \eqref{EqCompProbl} in which the multifunction $F$ attains the form \eqref{eq-2}.  This results in a parameter-dependent GE that is very well suited for the description of the behavior of a large class of parameter-dependent equilibrium problems. Problem
\eqref{EqCompProbl} is now formulated as the following MPEC
\begin{equation}\label{eq-1}
\begin{array}{ll}
\mbox{ minimize } & \varphi(x,y)\\
\mbox{ subject to } & 0 \in f(x,y)+Q(x,y),\\
& x \in U_{ad},
\end{array}
\end{equation}
where $x,y$ can be interpreted as the {\em control} and {\em state} variables, respectively, and $U_{ad}$ is the set of {\em admissible } controls.


Throughout the whole section, we will impose the following standing assumptions:

\begin{myitems}
\item[\rm (A1)]
The {\em solution mapping} $S : \mathbb{R}^{n} \rightrightarrows \mathbb{R}^{m}$ defined by
\begin{equation}\label{eq-2.5}
S(x):= \{y \in \mathbb{R}^{m} | \mbox{ GE \eqref{eq-2} is fulfilled }\}
\end{equation}
is single-valued and continuous on an open set $\Omega \supset U_{ad}$.
\item[\rm (A2)]
$U_{ad}$ is a convex polyhedron.
\item[\rm (A3)]
$Q$ is SCD-\ssstar (in the sense of Definition \ref{DefSSStar}) at every point $((x,S(x)),-f(x,S(x)))$, $x\in\Omega$.
\end{myitems}


Assumption (A1) is indispensable and is imposed essentially in all works using the ImP approach to the numerical solution of MPECs. Its fulfillment can be ensured in different ways. One possibility is to combine the Mordukhovich criterion ensuring the Aubin property of $S$ with a condition guaranteeing the uniqueness of the solution to (\ref{eq-2}) for admissible controls $x \in U_{ad}$. Alternatively, if $Q$ depends only on $y$, one could employ the notion of strong regularity; cf.\ \cite[Chapter 3B]{DR}, \cite[Chapter 5]{OKZ}.

Under (A1),  MPEC (\ref{eq-1}) amounts to the single-level nonsmooth optimization problem
\begin{equation}\label{eq-3}
\begin{array}{ll}
\mbox{ minimize } & \vartheta (x)\\
\mbox{ subject to } &  x \in U_{ad},
\end{array}
\end{equation}
where the objective $\vartheta : \mathbb{R}^{n} \rightarrow \mathbb{R}$ is defined via the composition $\vartheta(x):= \varphi (x,S(x)) $. Various available ImP techniques differ in the way how a solution to (\ref{eq-3}) is computed. Here we will use, similarly to \cite{OKZ}, \cite{DeBa01}, \cite{Has2}, \cite{KO}, the bundle trust-region  (BT) algorithm of Schramm and Zowe
(\cite{SZ, Zowe}) which is able to deal only with constraints in form of linear equalities and inequalities. This is the reason for imposing (A2).


Assumption (A3), at the first glance, might look rather restrictive, but it turns out that it holds in case of most major classes of equilibria considered in the literature so far. In particular, it is fulfilled in all classes of MPECs considered below.


In its original form, the BT algorithm requires from the user (similarly to most existing bundle methods) to compose a so-called oracle in which, given an $x \in U_{ad}$, one computes the corresponding state variable $y = S(x)$, the function value $\vartheta(x)$ and a subgradient $\xi\in \partial_c \vartheta(x)$. Then it can be shown that under the additional assumption that $\vartheta$ is weakly semismooth and under boundedness of the sequence of iterates $\ee xk$ there is an accumulation point $\hat x$ of this sequence  which is C-stationary, i.e., $0\in\partial_c\vartheta(\hat x)+N_{U_{ad}}(\hat x)$.

As observed in \cite{DeBa01}, it is not really necessary that the oracle returns a Clarke subgradient to ensure that the BT-algorithm works. Only a pseudosubgradient $\xi$ in the sense of Definition \ref{DefNorkin} is required, i.e., a certain semismoothness condition has to be fulfilled. The price we have to pay  is that no accumulation point needs  to be C-stationary.  However, it holds that at least one of them fulfills the weaker necessary optimality condition saying that $0$ is the sum of a pseudosubgradient and a normal to $U_{ad}$.

We clarify this issue in the following theorem.
\begin{theorem}[{\cite[Theorem 3.10]{GfrOut26a}}]\label{ThConvBT} Consider problem \eqref{eq-3} and assume that the objective $\vartheta$ is semismooth with respect to a mapping $\Theta:\R^n\tto\R^{1\times n}$ on an open set $\Omega\supset U_{ad}$.
Assume that the BT-algorithm is applied to this  problem along with an  oracle which returns for each $x\in U_{ad}$ besides the function value $\vartheta(x)$ an element $\xi$ with $\xi^T\in\Theta(x)$. If, moreover, the BT-algorithm  generates a bounded sequence of iterates $\ee xk$,  then this sequence possesses an accumulation point $\hat x$ satisfying the optimality condition
\begin{equation}\label{eq-9}
0 \in (\cocl \Theta)(\hat x)^T + N_{U_{ad}}(\hat x).
\end{equation}
\end{theorem}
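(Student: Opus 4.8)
The plan is to follow the structure of the original convergence analysis for the BT algorithm and to show that the only place where the "true" subdifferential $\partial_c\vartheta$ enters is replaceable by the set-valued map $\cocl\Theta$, because the latter retains the two features the convergence proof actually exploits: (i) it is a locally bounded, osc/usc multifunction with nonempty convex compact values (this is exactly Lemma~\ref{LemCocl}, applicable since $\Theta$ is locally bounded on $\Omega$ by Definition of $\Psi$-semismoothness, as $\vartheta$ being $\Theta$-ss forces $\Theta$ locally bounded); and (ii) it satisfies the approximate-subgradient inequality
\[
\sup_{g^T\in(\cocl\Theta)(x)}\bigl|\vartheta(x)-\vartheta(\xb)-\skalp{g,x-\xb}\bigr|=\oo(\norm{x-\xb})
\]
for every $\xb\in\Omega$, which is precisely the conclusion of Lemma~\ref{LemNorkin} combined with the definition of $\Psi$-semismoothness. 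In other words, by Lemma~\ref{LemNorkin} the function $\vartheta$ is generalized differentiable in the sense of Definition~\ref{DefNorkin} with $G(x)=\{g\in\R^n\mv g^T\in(\cocl\Theta)(x)\}$, and the elements $\xi$ returned by the oracle, satisfying $\xi^T\in\Theta(x)\subset(\cocl\Theta)(x)$, are legitimate pseudogradients in the terminology of \cite{MiGuNo87}.

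Next I would quote the convergence theory for the BT / bundle-trust method applied to a generalized differentiable objective over a polyhedral feasible set. Here one invokes the abstract convergence result for bundle methods fed with pseudogradients: under (A2), i.e. $U_{ad}$ a convex polyhedron, and under the hypothesis that the generated sequence $\ee xk$ is bounded, the BT algorithm produces an accumulation point $\hat x\in U_{ad}$ at which the stationarity condition $0\in G(\hat x)+N_{U_{ad}}(\hat x)$ holds, where $G$ is the pseudogradient mapping. This is the content of the Schramm--Zowe analysis (\cite{SZ, Zowe, SZUG}) as extended to pseudogradients in \cite{MiGuNo87, DeBa01, Dem}; the only ingredients it needs are the usc/compact-convex-valuedness of $G$ and the pseudogradient inequality, both established in the previous paragraph, plus the polyhedrality of the constraint set so that the BT subproblems remain linearly constrained QPs. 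Substituting $G(\hat x)=(\cocl\Theta)(\hat x)^T$ yields exactly \eqref{eq-9}.

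Finally I would assemble the argument: given $\Omega\supset U_{ad}$ open and $\vartheta$ $\Theta$-ss on $\Omega$, Proposition~\ref{PropNorkinR_m}(i) gives that $\vartheta$ is locally Lipschitz on $\Omega$, Lemma~\ref{LemCocl} gives that $\cocl\Theta$ is locally bounded, osc, usc with convex compact values on $\Omega$, Lemma~\ref{LemNorkin} gives the pseudogradient estimate, and the oracle's output $\xi$ with $\xi^T\in\Theta(x)\subset(\cocl\Theta)(x)$ is an admissible pseudogradient; invoking the cited BT-with-pseudogradients convergence result under the boundedness assumption on $\ee xk$ produces the claimed accumulation point $\hat x$ with \eqref{eq-9}.

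The main obstacle is not any single calculation but the bookkeeping needed to check that the BT convergence machinery of Schramm--Zowe, as modified in \cite{DeBa01, MiGuNo87} for pseudogradients, genuinely applies in our setting: one must verify that the "serious step" / "null step" dichotomy, the aggregation of past pseudogradients, and the trust-region update all go through verbatim when $\partial_c\vartheta$ is replaced by $\cocl\Theta$ — the crucial point being that the descent test and the stopping criterion only ever use the pseudogradient inequality and the usc boundedness, never any exact subgradient property. Since this verification was already carried out in \cite{GfrOut24a}, the cleanest route is to cite \cite[Theorem~3.10]{GfrOut24a} directly; the theorem as stated here is literally that result, so the proof reduces to observing that its hypotheses — $\Theta$-semismoothness of $\vartheta$ on an open $\Omega\supset U_{ad}$ with $U_{ad}$ a convex polyhedron, the oracle returning $\xi^T\in\Theta(x)$, and boundedness of $\ee xk$ — are exactly our assumptions.
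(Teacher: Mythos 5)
Your proposal is correct and ends up exactly where the paper does: Theorem~\ref{ThConvBT} is quoted verbatim from \cite[Theorem 3.10]{GfrOut24a} and the paper offers no independent proof, so citing that result is precisely the intended argument. Your preliminary sketch (Lemma~\ref{LemCocl} for the usc/compact-convex structure of $\cocl\Theta$, Lemma~\ref{LemNorkin} for the pseudogradient inequality, and the Schramm--Zowe/Dempe--Bard convergence machinery for bundle methods fed with pseudogradients) is a faithful outline of the ingredients of the cited proof, so nothing is missing.
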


Recall that in the ImP approach the function $\vartheta$ is given by $\vartheta(x)=\varphi(x,S(x))$. By Corollary \ref{CorSSImplMap}, $\vartheta$ is semismooth with respect to the mapping $\Theta$ given by \eqref{EqTheta} (with $\sigma$ replaced by $S$) under the following additional assumptions:
\begin{myitems}
  \item[\rm(A4)] For every $x\in\Omega$ and for every subspace $L^*=\rge(Z^*,X^*,Y^*)\in\Sp^*Q((x,S(x)),-f(x,S(x)))$, where $Z^*,Y^*\in \R^{m\times m}$ and $X^*\in\R^{n\times m}$,  the $m\times m$ matrix $\nabla_y f(x,S(x))^TZ^*+Y^*$ is nonsingular.
      \item[\rm(A5)] The function $\varphi$ is semismooth on $\{(x,S(x))\mv x\in\Omega\}$ with respect to some mapping $\Phi:\R^n\times\R^m\tto\R^{1\times(n+m)}$.
\end{myitems}
Since for every subspace $L^*\in\Sp^*Q((x,S(x)),-f(x,S(x)))$ we have $L^*\subset D^*Q((x,S(x)),-f(x,S(x)))$, and by taking into account \eqref{EqBasF}, one may conclude from the Mordukhovich criterion \cite[Theorem 9.43]{RoWe98} that (A4) is fulfilled, whenever $F$ is metrically regular around $(x,S(x),0)$, $x\in\Omega$, i.e., the implication
\[(0,0)\in D^*F(x,S(x),0)(z^*)\ \Rightarrow\ z^*=0\]
holds. This, in turn, implies that  the validity of (A4) can be ensured by assuming that the {\em enhanced} solution mapping $\widetilde{S}:\R^m\times\R^n\tto\R^m$, given by
\begin{equation}\label{EqEnhSolMap}\widetilde S(u,x):=\{y\mv u\in f(x,y)+Q(x,y)\},\end{equation}
has the Aubin property around all pairs $((0,x),S(x))$, $x\in \Omega$. Note that metric regularity of the constraints is a standard assumption (qualification condition) in optimization theory.

\if{As we will show below, Assumption (A4) is automatically fulfilled if we strengthen Assumption (A1) a little bit.
}\fi

Assumption (A5) is tailored to the situation that the mapping $\varphi$  is the composition of various semismooth mappings  as discussed at the end of Subsection \ref{Subsec2_6}.
Note that in the formulas \eqref{EqOracle1}, \eqref{eq-8} below for computing one element $\xi^T\in\Theta(x)$ we only need one element $g^T\in\Phi(x,S(x))$.

We are now in the position to explicitly formulate the oracle needed for applying the BT-algorithm to problem \eqref{eq-3}.

\medskip
\fbox{\begin{minipage}{0.95\textwidth}Oracle:
\begin{description}
\item[\rm 1)]
Given $\bar{x} \in U_{ad}$, compute $\bar{y}= S(\bar{x})$ and select some arbitrary vectors $(g_{x}^T, g_{y}^T) \in \Phi (\bar{x},\bar{y})$.
\item[\rm 2)]
Determine a subspace $L^*\in\Sp^*Q((\xb,\yb),-f(\xb,\yb))$ and its basis representation $(Z^*,X^*,Y^*)$ with matrices $Z^*,Y^*\in\R^{m\times m}$ and $X^*\in\R^{n\times m}$.
\item[\rm 3)]Compute the unique solution $\bar p$ of the linear system
\begin{equation}\label{EqOracle1}(\nabla_y f(\xb,\yb)^TZ^*+Y^*)p=-g_y\end{equation}
and set
\begin{equation}\label{EqOracle2} x^*:=(\nabla_xf(\xb,\yb)^TZ^*+X^*)\bar p.\end{equation}
\item[\rm 4)]
Return the function value $\varphi(\xb,\yb)$ and the column vector
\begin{equation}\label{eq-8}
\xi = g_x + x^{*}.
\end{equation}
\end{description}
\end{minipage}
}

\medskip
\begin{remark}
  Since $L^*=\rge(Z^*,X^*,Y^*)\subset \gph D^*Q((\xb,\yb),-f(\xb,\yb))$, we have $(X^*\bar p,Y^*\bar p)\in D^*Q((\xb,\yb),-f(\xb,\yb))(Z^*\bar p)$ and thus the inclusion
  \begin{equation}
    \myvec{x^*\\-g_y}\in\nabla f(\xb,\yb)^Tz^*+D^*Q((\xb,\yb),-f(\xb,\yb))(z^*)
  \end{equation}
  holds with $z^*=Z^*\bar p$.
\end{remark}

By Corollary \ref{CorSSImplMap}, under the posed assumptions we obtain that
$\xi$, returned by the oracle, is a pseudosubgradient of $\vartheta$ at $\bar{x}$. Thus, the convergence result of Theorem \ref{ThConvBT} holds and we want to illustrate the sharpness of condition (\ref{eq-9}). To this aim assume that a strengthened variant of (A1) is fulfilled, namely
\smallskip
\begin{myitems}
\item[\rm(A1')]
The mapping $\widetilde{S}$ given by \eqref{EqEnhSolMap}
is single-valued and locally Lipschitz on $\B_{\delta}(0) \times \Omega$ for some $\delta >0$.
\end{myitems}
\smallskip
In particular, (A1') guarantees that (A4) is fulfilled.

\begin{lemma}
  Under Assumption {\rm(A1')} there holds for every $(u,x)\in \B_\delta(0)\times \Omega$ that
  \begin{equation}\label{EqGenGrad}\Sp^*(f+Q)((x,\tilde S(u,x)),u)= \{\rge(B_u^T,-B_x^T,I_m)\mv B=(B_u\sepa B_x)\in\onabla \tilde S(u,x), B_u\in\R^{m\times m},B_x\in\R^{m\times n}\}.\end{equation}
\end{lemma}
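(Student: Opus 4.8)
The plan is to observe that, under (A1$'$), the graph of $f+Q$ is, locally around the point under consideration, a fixed orthogonal relabeling of the (Lipschitz) graph of $\tilde S$, and then to push the (adjoint) SC derivative through this relabeling and invoke Lemma~\ref{LemSCDSingleValued}.

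\emph{Step 1: local identification of the graphs.} Fix $(u,x)\in\B_\delta(0)\times\Omega$ and put $y:=\tilde S(u,x)$, so that $u\in f(x,y)+Q(x,y)$, i.e.\ $((x,y),u)\in\gph(f+Q)$. Since $\B_\delta(0)\times\Omega$ is open and $\tilde S$ is single-valued and continuous there, there is a neighborhood $W$ of $((x,y),u)$ such that, for $((x',y'),u')\in W$, one has $u'\in f(x',y')+Q(x',y')$ if and only if $y'=\tilde S(u',x')$. Writing $\Phi$ for the orthogonal coordinate permutation of $\R^{n+2m}$ sending $((u',x'),y')$ to $((x',y'),u')$, this reads $\gph(f+Q)\cap W=\Phi(\gph\tilde S)\cap W$; in particular, $\gph(f+Q)$ is near $((x,y),u)$ a Lipschitz manifold of dimension $n+m$.

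\emph{Step 2: transport of $\Sp$.} Being orthogonal, $\Phi$ maps tangent cones to tangent cones, preserves the dimension of a subspace (hence carries the graphically smooth points of $\gph\tilde S$ near $((u,x),y)$ onto those of $\gph(f+Q)$ near $((x,y),u)$), and acts isometrically on the metric space $\Z_{(n+m),m}$ (orthogonal projections transform by conjugation by $\Phi$, which preserves the operator norm). Reading off the definition of $\Sp$, this gives $\Sp(f+Q)((x,y),u)=\{\Phi L\mv L\in\Sp\tilde S(u,x)\}$, and therefore
\[\Sp^*(f+Q)((x,y),u)=\{(\Phi L)^*\mv L\in\Sp\tilde S(u,x)\}.\]
Since $\tilde S$ is Lipschitz near $(u,x)$, Lemma~\ref{LemSCDSingleValued} yields $\Sp\tilde S(u,x)=\{\rge(I_{m+n},A)\mv A\in\onabla\tilde S(u,x)\}$.

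\emph{Step 3: the linear algebra.} It remains to compute $(\Phi L)^*$ for $L=\rge(I_{m+n},A)$ with $A=(B_u\sepa B_x)$, $B_u\in\R^{m\times m}$, $B_x\in\R^{m\times n}$. A generic element of $L$ has the form $((p_u,p_x),B_up_u+B_xp_x)$, so $\Phi L$ is the span of the columns of $\myvec{I_n&0\\B_x&B_u\\0&I_m}$, where the three row groups correspond to the $x$-, $y$- and $u$-blocks and the columns are parametrized by $(p_x,p_u)$. Passing to the orthogonal complement and then multiplying by the matrix $S_{(n+m),m}$ from the definition of the adjoint subspace, one arrives at $(\Phi L)^*=\rge(B_u^T,-B_x^T,I_m)$. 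Inserting this into the two displays of Step~2 yields precisely \eqref{EqGenGrad}.

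The one genuine obstacle I anticipate is the bookkeeping in Step~3: one must keep the three coordinate blocks $\R^m,\R^n,\R^m$ and their adjoint counterparts in a consistent order across the permutation $\Phi$, the passage to the orthogonal complement, and the final multiplication by $S_{(n+m),m}$. Once the conventions are pinned down this is routine, and everything else is just the naturality of $\Sp$ under orthogonal relabelings of the ambient space together with Lemma~\ref{LemSCDSingleValued}.
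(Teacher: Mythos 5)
Your proposal is correct and takes essentially the same route as the paper: both identify $\gph(f+Q)$ (intersected with $\Omega\times\R^m\times\B_\delta(0)$) with a coordinate permutation of $\gph\tilde S$ and then feed the single-valued Lipschitz formula of Lemma~\ref{LemSCDSingleValued} for $\Sp^*\tilde S$ through that permutation. The only difference is one of execution: where you transport $\Sp$ by hand (orthogonality of the relabeling, isometry on the Grassmannian, explicit computation of $(\Phi L)^\perp$ and the adjoint), the paper delegates exactly this step to the calculus rule \cite[Theorem 4.1]{GfrOut23} for implicitly defined multifunctions — and your block computation $(\Phi L)^*=\rge(B_u^T,-B_x^T,I_m)$ checks out.
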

\begin{proof}
  Consider the linear mapping $\Xi:\R^n\times\R^m\times\R^m\to\R^m\times\R^n\times \R^m$ given by $\Xi(x,y,u)=(u,x,y)$. Then
  \[\gph(f+Q)\cap (\Omega\times\R^m\times\B_\delta(0))=\{(x,y,u)\in (\Omega\times\R^m\times\B_\delta(0))\mv \Xi(x,y,u)\in\gph \tilde S\}.\]
  Since $\Sp^*\tilde S((u,x),\tilde S(u,x))=\{\rge(I_m,B^T)\mv B\in\onabla \tilde S(u,x)\}$, $(u,x)\in\B_\delta(0)\times\Omega$, equation \eqref{EqGenGrad} follows from \cite[Theorem 4.1]{GfrOut23}.
\end{proof}

Further, define the function $\tilde{\vartheta}:\B_{\delta}(0) \times \Omega \rightarrow \mathbb{R} $ by

$$
\tilde{\vartheta}(u,x) = \varphi (x,\widetilde{S}(u,x))
$$
and consider the {\em enhanced} problem
\begin{equation}\label{eq-11}
\begin{array}{ll}
\mbox{ minimize } & \tilde{\vartheta}(u,x)\\
\mbox{  subject to } & (u,x) \in \{0\} \times U_{ad}.
\end{array}
\end{equation}


We observe that under (A1') problem (\ref{eq-11}) is well-defined and $\hat x$ is a solution of (\ref{eq-3}) if and only if $(0,\hat x)$ is a solution of (\ref{eq-11}).


\begin{proposition}
Apart from {\rm(A1')} assume that  $\varphi$ is continuously  differentiable on $\Omega\times\R^m$. Then (\ref{eq-9}) amounts to the Clarke stationarity (C-stationarity) condition for problem (\ref{eq-11}) at $(0,\hat x)$.
\end{proposition}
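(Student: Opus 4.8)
The plan is to check that, under (A1') and $\varphi\in C^1$, condition \eqref{eq-9} --- which then reads $0\in(\cocl\Theta)(\hat x)^T+N_{U_{ad}}(\hat x)$ --- coincides with the Clarke stationarity condition $0\in\partial_c\tilde\vartheta(0,\hat x)+N_{\{0\}\times U_{ad}}(0,\hat x)$ of the enhanced problem \eqref{eq-11} at $(0,\hat x)$. Write $\hat y:=S(\hat x)=\widetilde S(0,\hat x)$; since $\widetilde S$ is locally Lipschitz near $(0,\hat x)$ and $\varphi$ is $C^1$, the function $\tilde\vartheta$ is locally Lipschitz there, so $\partial_c\tilde\vartheta(0,\hat x)$ is a well-defined nonempty convex compact set. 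Because $\{0\}\times U_{ad}$ is polyhedral convex, $N_{\{0\}\times U_{ad}}(0,\hat x)=\R^m\times N_{U_{ad}}(\hat x)$, so the C-stationarity condition is equivalent to $0\in\pi_x\big(\partial_c\tilde\vartheta(0,\hat x)\big)+N_{U_{ad}}(\hat x)$, where $\pi_x\colon\R^m\times\R^n\to\R^n$ denotes the projection onto the $x$-coordinates --- the $u$-component of a subgradient plays no role, being absorbed by the free factor $\R^m$. Hence the whole statement reduces to the identity $\pi_x\big(\partial_c\tilde\vartheta(0,\hat x)\big)=(\cocl\Theta)(\hat x)^T$.

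Two preparatory facts are needed. First, (A1') forces (A4): by the Lemma preceding this proposition every subspace of $\Sp^*(f+Q)((x,S(x)),0)$, $x\in\Omega$, admits a basis whose last $m\times m$ block is $I_m$; comparing this with the basis $\rge\big(Z^*,\nabla_x f^TZ^*+X^*,\nabla_y f^TZ^*+Y^*\big)$ that \eqref{EqBasF} produces from an arbitrary $\rge(Z^*,X^*,Y^*)\in\Sp^*Q((x,S(x)),-f(x,S(x)))$, and using that two bases of one subspace differ by a nonsingular right factor, forces $\nabla_y f(x,S(x))^TZ^*+Y^*$ to be nonsingular. Thus Theorem \ref{ThSSImplMap} applies and, since $\varphi\in C^1$ (so $\Phi=\{\nabla\varphi\}$), $\Theta(x)=\{\nabla_x\varphi(x,S(x))+\nabla_y\varphi(x,S(x))A\mv A\in\Psi(x)\}$. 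Second, renormalising the basis in \eqref{EqBasF} by $(\nabla_y f^TZ^*+Y^*)^{-1}$ and matching it, via the same Lemma, with the \emph{unique} basis $\rge(B_u^T,-B_x^T,I_m)$, $(B_u\sepa B_x)\in\onabla\widetilde S(0,x)$, having last block $I_m$, identifies the generic element of $\Psi(x)$ with $B_x$; hence $\Psi(x)=\pi_x\,\onabla\widetilde S(0,x)$ for every $x\in\Omega$. Since the B-Jacobian of the locally Lipschitz map $\widetilde S$ is outer semicontinuous with compact values near $(0,\hat x)$, so is $\Psi=\pi_x\,\onabla\widetilde S(0,\cdot)$; therefore $(\cl\Psi)(\hat x)=\Psi(\hat x)$ and $(\cocl\Psi)(\hat x)=\co\,\Psi(\hat x)=\pi_x\,\co\,\onabla\widetilde S(0,\hat x)$, and since $A\mapsto\nabla_x\varphi(\hat x,\hat y)+\nabla_y\varphi(\hat x,\hat y)A$ is affine and commutes with $\cocl$, $(\cocl\Theta)(\hat x)=\{\nabla_x\varphi(\hat x,\hat y)+\nabla_y\varphi(\hat x,\hat y)A\mv A\in\pi_x\,\co\,\onabla\widetilde S(0,\hat x)\}$.

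It then remains to prove $\pi_x\big(\partial_c\tilde\vartheta(0,\hat x)\big)=(\cocl\Theta)(\hat x)^T$ by two inclusions. For ``$\subseteq$'' I would apply the chain rule for the Clarke subdifferential to $\tilde\vartheta=\varphi\circ H$, $H(u,x):=(x,\widetilde S(u,x))$: since $\co\,\onabla H(0,\hat x)$ consists of the matrices $\big(\begin{smallmatrix}0&I_n\\B_u&B_x\end{smallmatrix}\big)$ with $(B_u\sepa B_x)\in\co\,\onabla\widetilde S(0,\hat x)$, the chain rule gives $\partial_c\tilde\vartheta(0,\hat x)\subseteq\big\{\big(\begin{smallmatrix}B_u^T\nabla_y\varphi(\hat x,\hat y)^T\\\nabla_x\varphi(\hat x,\hat y)^T+B_x^T\nabla_y\varphi(\hat x,\hat y)^T\end{smallmatrix}\big)\mv(B_u\sepa B_x)\in\co\,\onabla\widetilde S(0,\hat x)\big\}$, whose projection onto the $x$-block is exactly $(\cocl\Theta)(\hat x)^T$ by the previous paragraph. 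For ``$\supseteq$'' I would work only at points where $\widetilde S$ is differentiable: writing $A\in\pi_x\,\co\,\onabla\widetilde S(0,\hat x)$ as a convex combination $A=\sum_i\lambda_iB_x^i$ with $\nabla\widetilde S(u^i_k,x^i_k)\to(B_u^i\sepa B_x^i)$ along some $(u^i_k,x^i_k)\to(0,\hat x)$, at each such point $\widetilde S$ and hence also $\tilde\vartheta$ is classically differentiable, and letting $k\to\infty$ yields $(\nabla_y\varphi(\hat x,\hat y)B_u^i,\,\nabla_x\varphi(\hat x,\hat y)+\nabla_y\varphi(\hat x,\hat y)B_x^i)^T\in\onabla\tilde\vartheta(0,\hat x)\subseteq\partial_c\tilde\vartheta(0,\hat x)$; the convex combination $\sum_i\lambda_i(\cdot)$ remains in $\partial_c\tilde\vartheta(0,\hat x)$ and has $x$-component $(\nabla_x\varphi(\hat x,\hat y)+\nabla_y\varphi(\hat x,\hat y)A)^T$. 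This settles the identity and, with the first paragraph, the proposition.

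The only genuinely delicate point is the ``$\supseteq$'' inclusion: one cannot simply compute $\onabla\tilde\vartheta(0,\hat x)$ from $\onabla\widetilde S(0,\hat x)$ via a chain rule for B-Jacobians, because $\tilde\vartheta$ may be differentiable at points where $\widetilde S$ is not (as $\varphi$ need not be injective), so the B-differential cannot be pushed through the composition in general. The remedy is to use Clarke's chain rule --- an inclusion, hence harmless --- for ``$\subseteq$'', and to generate elements of $\onabla\tilde\vartheta(0,\hat x)$ only along sequences at which $\widetilde S$, and therefore automatically $\tilde\vartheta$, is genuinely differentiable for ``$\supseteq$''. Everything else (the reduction in the first paragraph, the implication (A1')$\Rightarrow$(A4), and the identification of $(\cocl\Psi)(\hat x)$ with $\pi_x\,\co\,\onabla\widetilde S(0,\hat x)$) is routine bookkeeping, contingent only on recognising the roles of the preceding Lemma and of formula \eqref{EqBasF}.
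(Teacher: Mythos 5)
Your proof is correct and follows essentially the same route as the paper: identify the elements of $\Psi(x)$ (and hence of $\Theta(x)$ and $(\cocl\Theta)(x)$) with the blocks $B_x$ of $B=(B_u\sepa B_x)\in\onabla\widetilde S(0,x)$ by comparing \eqref{EqBasF} with \eqref{EqGenGrad}, relate $(\cocl\Theta)(\hat x)$ to $\partial_c\tilde\vartheta(0,\hat x)$ via the chain rule, and absorb the $u$-component using $N_{\{0\}\times U_{ad}}(0,\hat x)=\R^m\times N_{U_{ad}}(\hat x)$. You are, however, more careful than the paper in two respects that are worth noting: (i) you verify explicitly that (A1') forces (A4) (nonsingularity of $\nabla_yf^TZ^*+Y^*$), which the paper uses tacitly when writing $(\nabla_yf^TZ^*+Y^*)^{-1}$; and (ii) the paper asserts \emph{equality} in Clarke's chain rule \cite[Theorem~2.6.6]{Cla83} for $\partial_c\tilde\vartheta(0,x)$, whereas that theorem only gives an inclusion in general --- and it is precisely the reverse inclusion $(\cocl\Theta)(\hat x)^T\subseteq\pi_x\bigl(\partial_c\tilde\vartheta(0,\hat x)\bigr)$ that the implication \eqref{eq-9}\,$\Rightarrow$\,C-stationarity requires. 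Your direct limiting argument at differentiability points of $\widetilde S$ supplies exactly that inclusion, so your write-up closes a small gap in the paper's argument rather than introducing one.
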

\begin{proof}By comparing \eqref{EqBasF} and \eqref{EqGenGrad} we can deduce that for every $x\in\Omega$ and every subspace $\rge(Z^*,X^*,Y^*)\in \Sp^*Q((x,S(x)), -f(x,S(x)))$, $Z^*,Y^*\in\R^{m\times m}$, $X^*\in\R^{n\times m}$, there is a matrix $B=(B_u\sepa B_x)\in\onabla\tilde S(0,x)$ such that
\[B_u^T= Z^*(\nabla_y f(x,y)^TZ^*+Y^*)^{-1},\quad -B_x^T=(\nabla_x f(x,y)^TZ^*+X^*)(\nabla_y f(x,y)^TZ^*+Y^*)^{-1},\]
and vice versa. Hence the mapping $\Theta$ from Corollary \ref{CorSSImplMap} has the representation
\[\Theta(x)=\{\nabla_x\varphi(x,S(x))+ \nabla_y\varphi(x,S(x))B_x\mv B=(B_u\sepa B_x)\in\onabla \tilde S(0,x)\},\]
implying $\Theta(x)=(\cl\Theta)(x)$ and
\[(\cocl\Theta)(x)=\{\nabla_x\varphi(x,S(x))+ \nabla_y\varphi(x,S(x))B_x\mv B=(B_u\sepa B_x)\in\co\onabla \tilde S(0,x)\}.\]
 By the chain rule \cite[Theorem 2.6.6]{Cla83} we have
\[\partial_c \tilde\vartheta(0,x)^T=\{(\nabla_y\varphi(x,S(x))B_u, \nabla_x\varphi(x,S(x))+\nabla_y\varphi(x,S(x))B_x)\mv B=(B_u\sepa B_x)\in\co\onabla \tilde S(0,x)\}.\]
Thus, when the optimality condition \eqref{eq-9} holds, we have
\[\begin{pmatrix}u^*\\0\end{pmatrix}\in  \partial_c \tilde\vartheta(0,\hat x)+\{0\}\times N_{U_{ad}}(\hat x)\]
for some $u^*\in\R^m$. Since $N_{\{0\}\times U_{ad}}(0,\hat x)=\R^m\times N_{U_{ad}}(\hat x)$, we can conclude that the C-stationarity condition
\[0\in \partial_c \tilde\vartheta(0,\hat x)+N_{\{0\}\times U_{ad}}(0,\hat x)\]
for the enhanced problem is fulfilled.
\end{proof}

Although the multifunction $(\cocl \Theta)^T$ differs from $\partial_c\vartheta$ on $\Omega$ only on a set of measure $0$, we are aware that the optimality condition (\ref{eq-9}) can be definitely less sharp than standard C-stationarity conditions for (\ref{eq-3}). This can be explained by the theory of explicit and implicit variables (\cite{BM}) and it is documented by the following example.

\begin{example}
Consider an MPEC (\ref{eq-1}) with $x \in \mathbb{R}, y \in \mathbb{R}^{2}, U_{ad} = [-1, 1]$,
$$
\varphi(x,y)= -0.5 y_{1}+y_{2},
$$
and an equilibrium governed by the linear complementarity problem

$$
0 \in f(x,y)+Q(y):=\left( \begin{array}{l}
y_{1}+y_{2}-x\\
y_{2}+x
\end{array}\right) + N_{\R^{2}_{+}}(y).
$$
It is easy to see that
\begin{equation}\label{eq-24}
S(x)= \left\{ \begin{array}{ll}
(x,0)^{T} & \mbox{ for } x \geq 0,\\
(0,-x)^{T} & \mbox{ otherwise, }
\end{array}  \right.
\qquad
\vartheta(x)=\begin{cases}
  -0.5x&\mbox{if $x\geq0$,}\\
  -x&\mbox{otherwise,}
\end{cases}
\end{equation}
and so assumption (A1) is fulfilled. Clearly, (A2) is fulfilled as well and (A3) holds true by \cite[Proposition 5.3]{GfrOut23}.

We observe that
\[N_{\R^2_+}(S(x))\ni-f(x,S(x))=\begin{cases}
  (0,-x)^T&\mbox{if $x\geq0$,}\\
  (2x,0)^T&\mbox{else}
\end{cases}\]
and
\begin{align*}&\Sp^* N_{\R^2_+}(S(x),-f(x,S(x)))\\
&=\begin{cases}\{(\R\times\{0\})\times(\{0\}\times\R)\}&\mbox{if $x>0$,}\\
\{\R^2\times\{(0,0)\},(\R\times\{0\})\times(\{0\}\times\R), (\{0\}\times\R)\times(\R\times\{0\}),\{(0,0)\}\times\R^2\}&\mbox{if $x=0$,}\\
\{(\{0\}\times\R)\times(\R\times\{0\})\}&\mbox{if $x<0$\,;}
\end{cases}
\end{align*}
cf.\ Section \ref{SubsecPolyhedral} below. Some calculations yield that the mappings $\Psi$ and $\Theta$ from Corollary \ref{CorSSImplMap} are given by
\begin{gather*}
\Psi(x)=\begin{cases}\left\{\myvec{1\\0}\right\}&\mbox{if $x>0$,}\\[2.5ex]
\left\{\myvec{2\\-1},\myvec{1\\0},\myvec{0\\-1}, \myvec{0\\0}\right\}&\mbox{if $x=0$,}\\[2.5ex]
\left\{\myvec{0\\-1}\right\}&\mbox{if $x<0$,}\end{cases}\quad
\Theta(x)=\begin{cases}\{-0.5\}&\mbox{if $x>0$,}\\
\{-2,\ -0.5,\ -1,\ 0\}&\mbox{if $x=0$,}\\
\{-1\}&\mbox{if $x<0$.}
\end{cases}
\end{gather*}
We see that for all $x\not=0$ there holds $\Psi(x)=\onabla S(x)$, $\Theta(x)=\partial_c\vartheta(x)$, but $\Psi(0)\not=\onabla S(0)$ and $\Theta(0)\not\subset\partial_c\vartheta(0)=[-1,-0.5]$.

Consequently, if the BT-algorithm encounters one iterate $\ee xk=0$, our oracle could return the pseudosubgradient $\xi=0$ and the BT-algorithm would stop. Nevertheless, note that if we start with a randomly chosen starting point $\ee x0$, then all iterates $\ee xk$ will be different from $0$ with probability 1 and actually, the BT-algorithm will terminate after a few iterations with the exact solution  $\hat x=1$.
\end{example}

The only complicated step in the above approach is the specification of a suitable linear subspace $L^*\in\Sp^*Q(\xb,\yb)$ and its  basis representation as range of suitable matrices $Z^*,X^*,Y^*$ (or $Z^*,Y^*$, if $Q$ does not depend on $x$).

In the next sections we will explain how these subspaces and their basis representations can be computed in some frequently arising cases.
\subsection{\label{SubsecPolyhedral}Normal cone mapping to convex polyhedral sets}
Consider first the (easy) case that $Q(y)=\widehat N_C(y)$, where
\[C:=\{y\in\R^m\mv \skalp{a_i,y}\leq b_i, i=1,\ldots,l \}\]
is a convex polyhedral set having the representation above with vectors $a_i\in\R^m$ and reals $b_i$, $i=1,\ldots,l$. Given $(\yb,\yb^*)\in \gph N_C$ it is well-known (\cite[Proposition 5.3]{GfrOut23}) that
\[\Sp N_C(\yb,\yb^*)=\Sp^* N_C(\yb,\yb^*)=\{(F-F)\times(F-F)^\perp\mv F \mbox{ is face of }\K_C(\yb,\yb^*)\},\]
where $\K_C(\yb,\yb^*)=\{v\in T_C(\yb)\mv \skalp{\yb^*,v}=0\}$ denotes the critical cone to $C$ at $\yb$ with respect to $\yb^*$.
The computation of {\em all} the subspaces in $\Sp^* N_C(\yb,\yb^*)$ might be  puzzling, but for our oracle we only need  {\em one} of them. This subspace can be chosen as the one which is induced by  the face $E:=\lin T_C(\yb)$, the {\em lineality space}  of the tangent cone $T_C(\yb)$ given by $\lin T_C(\yb)=\{v\mv\skalp{a_i,v}=0,\ i\in I(\yb)\}$, where
\[I(\yb):=\{i\in\{1,\ldots,l\}\mv \skalp{a_i,\yb}=b_i\}.\]
Note that this lineality space $E$ is for every normal $y^*\in N_C(\yb)$ a face of the critical cone $\K_C(\yb,y^*)$.
 Since $E$ is a subspace, we have $E=E-E$ and the subspace $L^*$, required for our oracle, can be chosen as $L^*=E\times E^\perp$. There remains to compute a basis representation of $L^*$.
 To this aim we may employ the (full) QR factorization of the matrix $D$, where $D$ is composed from the normalized vectors $a_i/\norm{a_i}$, $i\in I(\yb)$, as columns. Let
\[DP=Q\begin{pmatrix}R_{11}&R_{12}\\0&0\end{pmatrix},\]
where $P$ is an $\vert I(\yb)\vert\times \vert I(\yb)\vert$ permutation matrix, $Q\in\R^{m\times m}$ is orthogonal, $R_{11}\in\R^{s\times s}$ is a nonsingular upper triangular matrix,  $R_{12}\in\R^{s\times(\vert I(\yb)\vert-s)}$ and $s$ is the rank of $D$. If we partition $Q=(Q_1\sepa Q_2)$ with $Q_1\in\R^{m\times s}$, $Q_2\in\R^{m\times(m-s)}$, there holds $E=\rge Q_2$, $E^\perp=\rge Q_1$ and consequently $L^*=E\times E^\perp=\rge(Z^*,Y^*)$ with $Z^*=(Q_2\sepa 0)$, $Y^*=(0\sepa Q_1))$. Here we use the convention that the range of an $m\times 0$ matrix is $\{0\}\subset\R^m$.
\subsection{\label{SubsecInequ} Normal cone mapping to inequality   constraints}

Consider now the case when $Q(y)=\widehat{N}_{\Gamma}(y)$ with
\begin{equation}\label{eq-200}
\Gamma = \{y \in \mathbb{R}^{m} | g_{i}(y)\leq 0, ~ i=1,2,\ldots, l\},
\end{equation}
where the functions $g_{i}: \mathbb{R}^{m} \rightarrow  \mathbb{R}$ are convex  and twice continuously differentiable. This situation has been thoroughly investigated in \cite{OKZ} (even for $x$-dependent inequalities and equalities) under the {\em  linear independence constraint qualification} (LICQ). Here we present a modification of this approach which is simpler and enables us to replace LICQ by the following (weaker) assumption.
\begin{myitems}
\item [\rm(CQ):]
For all $x \in U_{ad}$ and $y = S(x)$ the inequality system in (\ref{eq-200}) fulfills the {\em Mangasarian-Fromovitz} CQ (denoted by MFCQ) and the {\em constant rank} CQ (denoted by CRCQ).
\end{myitems}
(CQ) will allow us to invoke a result from \cite{Hen1} or \cite{GfrOut16}, essential in the proof of Proposition~\ref{PropTangIneq} below.

So, throughout the rest of this section we assume that (CQ) is fulfilled and consider a triple $(\bar{y},\bar{y}^*,\bar{\lambda})$, where $\bar{y}=S(\bar{x})$ for some admissible control $\bar{x},\yb^*\in Q(\bar{y})$ and $\bar{\lambda}\in \mathbb{R}^{l}$ is an arbitrary {\em Lagrange multiplier} satisfying the conditions
\begin{equation}\label{eq-101}
\yb^*=\sum\limits^{l}_{i=1}\bar{\lambda}_{i}\nabla g_{i}(\bar{y})^T, ~~\bar{\lambda}\geq 0, ~~
\langle \bar{\lambda}, g(\bar{y})\rangle =0
\end{equation}
with $g(\bar{y})=(g_{1}(\bar{y}), \ldots, g_{l}(\bar{y}))^T$.

Further, for the sake of notational simplicity, we introduce the notation
$$
A:=\sum\limits^{l}_{i=1}\bar{\lambda}_{i}\nabla^{2} g_{i}(\bar{y})=A^{T} ~ \mbox{ and } ~
\K_{\Gamma}(\bar{y},\yb^*):=\{h\in T_\Gamma(\yb)\mv\skalp{\yb^*,h}=0\},
$$
i.e., it is the critical cone to $\Gamma$ at $\bar{y}$ with respect to $\yb^*$. Let
$$
I(y):= \{i\in\{1,2,...,l\}| g_i(y)=0\}
$$
denote the index set of active inequalities (at $y$).
Recall that the critical cone $\K_\Gamma(\yb,\yb^*)$ has the representation
\[\K_\Gamma(\yb,\yb^*)=\left\{z\bbmv \begin{matrix}\nabla g_i(\yb)z=0,\ i\in I(\yb):\ \lb_i>0\\
\nabla g_i(\yb)z\leq0,\ i\in I(\yb):\ \lb_i=0\end{matrix}\right\}.\]

\begin{proposition}\label{PropTangIneq}
In the setting described above let
$$
E:=\{z \in \mathbb{R}^{m} | \nabla g_{i}(\bar{y}) z  = 0 ~ \mbox{ for } ~ i \in I(\bar{y})\},
$$
and consider the subspace $M \in \Z_{mm}$ given by
$$
M = E \times E^{\perp}.
$$
Then for
$$
L: = \left \{ (h,k)\in \mathbb{R}^{m}  \times \mathbb{R}^{m} \left | \left (\begin{array}{l}
h\\
k-Ah
\end{array}\right ) \in M  \right.  \right  \}
$$
one has
\begin{equation}\label{eq-201}
L=L^* \in \mathcal{S}Q(\bar{y},\yb^*)= \mathcal{S}^{*}Q(\bar{y},\yb^*).
\end{equation}
\end{proposition}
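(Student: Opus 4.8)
The plan is to reduce the statement to the known polyhedral formula for the critical cone by recognizing that, under (CQ), the graph of $Q=\widehat N_\Gamma$ near $(\yb,\yb^*)$ is, up to a diffeomorphic correction encoding the curvature term $A$, the same as the graph of the normal cone map to the linearized cone. Concretely, I would first recall that $\Sp Q$ is the limit of tangent spaces $T_{\gph Q}(y,y^*)$ taken over points $(y,y^*)\in\OO_Q$ approaching $(\yb,\yb^*)$, so everything comes down to understanding $\gph Q$ and its tangent spaces in a neighborhood. The key structural fact I would invoke is that, under MFCQ together with CRCQ, one has a description of $T_{\gph Q}(\yb,\yb^*)$ (the graphical derivative of $Q$) from \cite{Hen1} or \cite{GfrOut16}: for such constraint systems the tangent cone to the graph of the normal cone map decomposes as
\[
T_{\gph \widehat N_\Gamma}(\yb,\yb^*)=\Big\{(h,k)\mv \big(h,\;k-Ah\big)\in T_{\gph \widehat N_{\widehat\Gamma}}(\yb,\yb^*)\Big\},
\]
where $\widehat\Gamma$ is the critical-cone/linearized object and $A=\sum_i\lb_i\nabla^2 g_i(\yb)$ is the Hessian-of-the-Lagrangian term. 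This is exactly the ``shear'' $\myvec{h\\k}\mapsto\myvec{h\\k-Ah}$ appearing in the definition of $L$.

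Second, I would pass from the single tangent-cone identity to the statement about $\Sp$. The map $(y,y^*)\mapsto(y,y^*)$ with the graph of $\widehat N_\Gamma$ locally transformed by the $C^1$ change of coordinates $(y,y^*)\mapsto(y,y^*-\sum_i\lambda_i(y,y^*)\nabla g_i(y)^T)$ — or, more cleanly, arguing directly on the sequences defining $\Sp$ — carries tangent spaces at points of $\OO_Q$ to tangent spaces at points of $\OO_{\widehat N_{C}}$ where $C$ is the relevant polyhedral cone, and the correction by $A$ is continuous, hence survives passage to the limit. Thus $L\in\Sp Q(\yb,\yb^*)$ iff the "sheared" subspace $M'$ with $(h,k)\in L\Leftrightarrow(h,k-Ah)\in M'$ satisfies $M'\in\Sp\widehat N_{C}$ at the corresponding point. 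Now I invoke Section~\ref{SubsecPolyhedral}: among the subspaces in $\Sp\widehat N_C$ (which are the $(F-F)\times(F-F)^\perp$ over faces $F$ of the critical cone), the canonical choice is $E\times E^\perp$ with $E$ the lineality space, i.e.\ exactly $M=E\times E^\perp$ from the statement, with $E=\{z\mv \nabla g_i(\yb)z=0,\ i\in I(\yb)\}$. Therefore $L$ as defined lies in $\Sp Q(\yb,\yb^*)$.

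Third, I would verify the self-adjointness $L=L^*$ and the equality $\Sp Q=\Sp^* Q$ at this point. For this I would use that $Q=\widehat N_\Gamma$ is the subgradient mapping of the (prox-regular, subdifferentially continuous) indicator $\delta_\Gamma$ under (CQ), so by Example~\ref{ExSCD}(3) every subspace in $\Sp(\partial\delta_\Gamma)(\yb,\yb^*)$ is self-adjoint and admits a representation $\rge(P,W)$ with $P^2=P$, $W(I_m-P)=I_m-P$. I would identify $P$ as the orthogonal projector onto $E$ and $W=P A P + (I_m-P)$ (or the matching basis obtained from the QR factorization as in Section~\ref{SubsecPolyhedral}, shifted by $A$), check $\rge(P,W)=L$, and confirm the two algebraic identities. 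Alternatively, and perhaps more efficiently, I would note that $M=E\times E^\perp$ is trivially self-adjoint (orthogonal-complement pair), and that the shear by the symmetric matrix $A$ preserves self-adjointness: if $(u^*,v^*)\perp L$ then writing out $L^\perp$ and applying $S_{mm}$ one checks $L^*$ is the shear of $M^*=M$ by $A$, hence $L^*=L$. Combined with the general fact (Example~\ref{ExSCD}(3)) that $\Sp(\partial q)=\Sp^*(\partial q)$ for prox-regular subdifferentially continuous $q$, this gives \eqref{eq-201}.

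The main obstacle I anticipate is the second step: justifying that the curvature correction $h\mapsto k-Ah$, which is exactly correct for the single graphical-derivative/tangent-cone identity at $(\yb,\yb^*)$, is also the correct uniform correction at all nearby points of $\OO_Q$ so that it passes to the limit defining $\Sp$. Under CRCQ the active index set and the multiplier structure are locally stable enough that the Hessian term varies continuously, which is what makes this work, but the bookkeeping — ensuring the QR-based basis and the shear are compatible across a neighborhood, and that one picks consistently the lineality-space face — is the delicate part. If a clean "local graph = sheared polyhedral graph" diffeomorphism statement is available from \cite{Hen1,GfrOut16}, the argument collapses; otherwise one works directly with the sequences $(y_k,y_k^*)\to(\yb,\yb^*)$ in the definition of $\Sp$, which is more laborious but routine.
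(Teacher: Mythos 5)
Your proposal assembles the right ingredients (the tangent-cone formula from \cite{Hen1,GfrOut16}, the shear by the symmetric matrix $A$, the polyhedral subspace $E\times E^\perp$, and self-adjointness via the convexity of $\delta_\Gamma$), but the crux --- your ``second step'', passing from a tangent-cone identity to actual membership in $\Sp Q(\yb,\yb^*)$ --- is exactly where the argument is left unresolved, and the route you sketch for it is both unjustified and harder than necessary. Membership in $\Sp Q(\yb,\yb^*)$ requires exhibiting a sequence of \emph{graphically smooth} points of $\gph Q$ whose tangent \emph{spaces} converge to $L$; the tangent cone at $(\yb,\yb^*)$ itself is $\{(h,k)\mv k\in Ah+N_{\K}(h)\}$ with $\K$ the critical cone, which is generally not a subspace, so nothing can be read off at the base point. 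Your proposed fix --- a local diffeomorphism identifying $\gph\widehat N_\Gamma$ near $(\yb,\yb^*)$ with a sheared polyhedral graph, uniformly over a neighborhood --- would require a smooth multiplier selection $\lambda(y,y^*)$, which is problematic precisely because LICQ is \emph{not} assumed (multipliers are non-unique under MFCQ+CRCQ), and note also that the proposition must hold for an \emph{arbitrary} chosen multiplier $\lb$, each giving its own $A$ and its own $L$.

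The paper's proof sidesteps all of this with one idea you are missing: perturb the \emph{multiplier}, not the point. Keeping $\yb$ fixed, it replaces the zero components of $\lb$ on $I(\yb)$ by $\varepsilon>0$, producing a nearby normal vector $y^*_\varepsilon\in\widehat N_\Gamma(\yb)$ at which strict complementarity holds, so that $K_\Gamma(\yb,y^*_\varepsilon)=E$ is already a subspace. The formula of \cite{Hen1,GfrOut16} applied \emph{at} $(\yb,y^*_\varepsilon)$ then gives $T_{\gph Q}(\yb,y^*_\varepsilon)=\bigl(\begin{smallmatrix}I&0\\A_\varepsilon&I\end{smallmatrix}\bigr)M\in\Z_{mm}$, i.e.\ these are graphically smooth points, and letting $\varepsilon\searrow 0$ (so $A_\varepsilon\to A$) yields $L\in\Sp Q(\yb,\yb^*)$ directly from the definition of the SC derivative --- no analysis of $\gph Q$ off the fiber $\{\yb\}\times\widehat N_\Gamma(\yb)$ is needed. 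Your treatment of self-adjointness (via Example~2.4 for the convex function $\delta_\Gamma$, or the direct observation that a shear by a symmetric $A$ preserves self-adjointness of $E\times E^\perp$) is correct and matches the paper's concluding step.
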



\proof
Consider a multiplier $\bar{\lambda}$ satisfying (\ref{eq-101}) and let $J:=\{i\in I(\yb)\mv \lb_i=0\}$  denote the index set of active constraints with vanishing multipliers.
Next let us construct a perturbed multiplier $\tilde{\lambda}$ such that, with some $\varepsilon > 0$,
$$
\begin{array}{lll}
\tilde{\lambda}_{i} = \bar{\lambda}_{i} & \mbox{ for } & i \in I(\bar{y}) \setminus J\\
\tilde{\lambda}_{i} = \varepsilon & \mbox{ for } & i \in J.
\end{array}
$$
It follows that with
$$
y^*_{\varepsilon}:= \sum\limits_{i \in I(\bar{y})}\tilde{\lambda}_{i}\nabla g_{i}(\bar{y})
$$
one has
$$
\K_{\Gamma}(\bar{y},y^*_{\varepsilon})=E.
$$
We may now invoke \cite[Theorem 3.1]{Hen1} or \cite[Theorem 1]{GfrOut16}, according to which
$$
T_{\gph Q}(\bar{y},y^*_{\varepsilon})= \{(h,k)\in \mathbb{R}^{m}  \times \mathbb{R}^{m} | k \in A_{\varepsilon}h + N_{E}(h)\} =
\{(h,k)| h \in E, k-A_{\varepsilon}h \in E^{\perp}\},
$$
where $A_{\varepsilon} = \displaystyle\sum\limits_{i \in I(\bar{y})} \tilde{\lambda}_{i}\nabla^{2}q_{i}(\bar{y})$. Since $T_{\gph Q}(\bar{y},y^*_{\varepsilon})$ admits the representation
\begin{equation}\label{eq-203}
T_{\gph Q}(\bar{y}, y^*_{\varepsilon}) =
\left( \begin{array}{ll}
I & 0\\
-A_{\varepsilon} & I
\end{array}  \right )^{-1}  M=\left( \begin{array}{ll}
I & 0\\
A_{\varepsilon} & I
\end{array}  \right )  M
\end{equation}
and $M\in\Z_{mm}$, it follows that $T_{\gph Q}(\bar{y},y^*_{\varepsilon})\in \Z_{mm}$ as well.

To prove (\ref{eq-201}) it suffices now to invoke \cite[Lemma 3.1(iii)]{GfrOut22a}. Since $A_{\varepsilon}\rightarrow A$ for $\varepsilon \searrow 0$, it follows from \cite[Lemma 3.1]{GfrOut22a} that
\begin{equation}\label{EqReprW}
\lim\limits_{\varepsilon \searrow 0}
\left( \begin{array}{ll}
I & 0\\
A_{\varepsilon} & I
\end{array}  \right )M=\begin{pmatrix}I&0\\A&I\end{pmatrix}M=L \in \mathcal{S}Q(\bar{y},\yb^*).
\end{equation}
Finally, since $Q$ is the subdifferential mapping of the convex lsc function $\delta_\Gamma$, the relations $L=L^*$ and $\Sp Q(\bar{y},\yb^*) =
\Sp^*Q(\bar{y},\yb^*)$ follow and we are done.
\endproof

It remains to find a suitable basis representation of $L^*$. Denoting by $D$ the $m\times \vert I(\bar y)\vert$ matrix having the normalized vectors $\nabla g_i(\yb)^T/\norm{\nabla g_i(\yb)}$, $i\in I(\yb)$, as columns, we can proceed as in Section \ref{SubsecPolyhedral} to find an orthogonal $m\times m$ matrix $Q=(Q_1\sepa Q_2)$  with $Q_1\in\R^{m\times s}$, $Q_2\in\R^{m\times(m-s)}$, where $s$ denotes the rank of $D$, so that $M=\rge((Q_2\sepa 0),(0\sepa Q_1))$ and  by using \eqref{EqReprW} we obtain that
\[L^*=\rge (Z^*,Y^*)\mbox{ with }Z^*=(Q_2\sepa 0),\ Y^*=(AQ_2\sepa Q_1).\]

Thus the linear system \eqref{EqOracle1} and  formula \eqref{EqOracle2} amount to
\begin{equation}\label{eq-206}
(\nabla_{y}f(\bar{x},\bar{y})^{T}+A)Q_2p_1+ Q_1p_2=-g_y\quad\mbox{with}\quad p_1\in\R^{m-s},\ p_2\in\R^s
\end{equation}
and
\begin{equation}
\begin{array}{ll}
& x^{*}=\nabla_{x}f(\bar{x},\bar{y})^{T}Q_{2}\bar p_1.
\end{array}
\end{equation}
By premultiplying the linear system \eqref{eq-206} by $Q_2^T$, the computation of $\bar p_1$ simplifies to the solution of the reduced system
\[Q_2^T(\nabla_{y}f(\bar{x},\bar{y})^{T}+A)Q_2p_1=-Q_2^Tg_y\]
in $m-s$ variables. This system is smaller than the one used in the approach  from \cite{OKZ} where a  system of $m+s$ equations has to be solved. On the other hand, the approach of \cite{OKZ} enables us to handle the  situation, when $\varphi$ depends also on the multiplier $\lambda$.
\if{
\begin{remark}
  Note that under Assumption (CQ), we  only know {\em one} subspace $L^*\in \Sp^*Q(\yb,\yb^*)$. It might be subject of further research to compute the whole SC derivative   $\Sp^*Q(\yb,\yb^*)$.
\end{remark}
}\fi

\subsection{\label{SecStackel}Stackelberg games}
Consider the case when the  equilibrium is governed by a variational inequality of the second kind that can be written down in the form
\eqref{eq-2} with
\begin{equation}\label{eq-60}x\in\R^n,\ y=(y^1,y^2,\ldots,y^l)\in(\R^n)^l\quad\mbox{and}\quad Q(x,y)=Q(y)=\partial q(y)=\prod_{i=1}^l\partial q^i(y^i).
\end{equation}
In \eqref{eq-60} functions $q^i:\R^n\to\oR$ are proper convex and lower semicontinuous.

In this way one can model, e.g., a parameter-dependent Nash game, where $x$ is the parameter, $y^i$ is the strategy of the $i$-th player, $i=1,2,\ldots,l$, and $q^i$ is the nonsmooth part of his objective which does not depend on the strategies of the remaining players. Put $m:=nl$.  As already pointed out in Example \ref{ExSCD},  $Q=\partial q$ is an SCD mapping. Further, due to the imposed separability of $q$ and  \cite[Proposition 3.5]{GfrMaOutVal23}, the matrices $Z^*,Y^*$
 have the block diagonal structure
\begin{equation}\label{EqStackelbergSep}
Z^*=\begin{pmatrix} Z_1^*&0&\cdots&0\\
0&Z_2^*&&0\\
\vdots&&\ddots&\vdots\\
0&0&\cdots&Z_l^*\end{pmatrix}
\quad Y^*=\begin{pmatrix} Y_1^*&0&\cdots&0\\
0&Y_2^*&&0\\
\vdots&&\ddots&\vdots\\
0&0&\cdots&Y_l^*\end{pmatrix},
\end{equation}
where $\rge(Z_i^*,Y_i^*)\in\Sp^*\partial q^i(\bar y^i,{\yb^*}{}^i)$, $i=1,2,\ldots,l$, with $\yb^*=-f(\xb,\yb)$.

As a numerical test example of an MPEC with such an  equilibrium we will consider an oligopolistic market, where each player (firm) produces at most $n$ commodities and tries to maximize his profit. When each player knows all data of his rivals, the equilibrium  state of such a market is the Cournot--Nash non-cooperative equilibrium (\cite[Chapter 12]{OKZ}). Consider now the situation, when one of the players decides to replace his original non-cooperative strategy by the Stackelberg one  and becomes thus the Leader. The remaining players share the rest of the market by computing a new Cournot--Nash equilibrium among themselves. In this case the Leader has to solve MPEC \eqref{eq-1}, where $x=(x_1,x_2,\ldots,x_n)\in\R^n$ is his own production, $U_{ad}(=:\Omega^0)$ is his set of admissible productions and $\varphi:\R^n\times\R^m\to\R$ is his loss function. In the respective GE \eqref{eq-2}  the $i$-th component of the single-valued part $f:\R^n\times\R^m\to\R^m$ amounts to
\[f^i(x,y)=\nabla_{y^i}\psi^i(x,y)\in\R^n,\ i=1,2,\ldots,l,\]
where $\psi^i:\R^m\times\R^m\to\R$ is the loss function of the $i$-th firm (depending naturally on all strategies of his rivals including the Leader). Finally, the multi-valued part is given by \eqref{eq-60} and
\begin{equation}\label{eq-61}q^i(y^i)=\tilde q^i(y^i)+\delta_{\Omega^i}(y^i),\ i=1,\ldots,l,
\end{equation}
where $\Omega^i\subset\R^n$ is the closed convex set of admissible strategies  of the $i$-th player.
The convex nonsmooth functions $\tilde q^i;\R^n\to\R$  model the so-called costs of change, representing the expenses associated with any change of the production profile of the $i$-th firm. Naturally, these expenses may be nil. In particular, we choose $\tilde q^i(y^i)=\sum_{j=1}^n\beta_j^i\vert y^i_j-a^i_j\vert$, where $a^i$ denotes the  ''previous'' production portfolio of the $i$-th player and $\beta^i_j$ are positive weights. The sets $\Omega^i$ are convex polyhedral sets of the form $\{y^i\mv \xi^i_j y^i\leq\zeta^i_j, j=1,\ldots,p^i\}$ and the loss functions are
\[\varphi(x,y)=c^0(x)-\sum_{j=1}^nx_j\pi_j(t_j(x,y))+\tilde q^0(x),\ \psi^i(x,y):=c^i(y^i)-\sum_{j=1}^ny^i_j\pi_j(t_j(x,y)),\]
where $t(x,y):=x+\sum_{i=1}^ly^i\in\R^n$ is the sum of all available commodities on the market and $\pi_j:\R_+\to\R_+$ denotes the so-called {\em inverse demand function} for the $j$-th commodity.
The concrete form of the continuously differentiable functions $c^i$ and $\pi_j$ has been taken over from \cite{GfrOutVal23}. It is important to note that the assumptions, imposed in \cite{GfrOutVal23} on the above listed problem data, ensure in particular the fulfillment of principal assumptions (A1)-(A5) and thus the possibility to solve this MPEC via the suggested approach.  Concretely,
by \eqref{EqStackelbergSep} we   compute subspaces
$L^i\in \Sp^*(\partial q^i)(\yb^i,\yb^*{}^i)$, $i=1,\ldots,l$, and by \cite[Theorem 7.1]{GfrOutVal23} we have
$L^i=E^i\times {E^i}^\perp$ with
\[E^i=\{w\in\R^n\mv w_j=0, j\in J^i_0(\yb^i),\ \skalp{\xi^i_j,w}=0, j\in I^i_0(\yb^i)\}\]
and $J^i_0(\yb^i)=\{j\mv \yb^i_j=a^i_j\},\ I^i_0(\yb^i)=\{j\mv \skalp{\xi^i_j,\yb^i}=\zeta^i_j\}$. The computation of a suitable basis representation $L^i=\rge(Z_i^*,Y_i^*)$ can now be conducted analogously to Subsection \ref{SubsecPolyhedral}. It follows that we can employ the SCD-\ssstar Newton method from \cite[Section 7]{GfrOutVal23} as a part of our oracle. During the course of solving the inclusion $0\in F(x,y)+Q(y)$ in variable $y$, we  compute subspaces $\ee Lk\in\Sp^*(\partial q)(\ee yk, \ee {y^*}k)$ together with their basis representations at  iterates $(\ee yk,\ee{y^*}k)$ in the same way as described above, and the last  of these subspaces can be used for the computation of a pseudosubgradient in the oracle.

We solved first an academic example fully described in \cite[p.~1480]{GfrOutVal23}. To solve the problems, we used the code BTNCLC from the family of BT codes; see \cite{SZUG} and \cite{SZ}. With $n=3$, $l=4$ and $m=12$ and the required accuracy $\epsilon=10^{-6}$ we needed 44 iterations to obtain the productions displayed in Table~\ref{Tab1}.
\begin{table}[h]
\begin{tabular}{|c|c|c|c|c|c|}
\hline
&Leader&Follower 1&Follower 2&Follower 3&Follower 4\\
\hline
Commodity 1&61.7342&53.5983&20.4201&50.1653&44.7341\\
Commodity 2&78.6542&64.7525&29.8977&57.1949&49.8414\\
Commodity 3&47.8100&84.8591&49.6823&70.5791&59.9312\\
\hline
\end{tabular}

	\caption{Optimal production for the Stackelberg strategy.\label{Tab1}}%
\end{table}

In Table \ref{Tab2} we present the values of the loss functions and compare them with the ones of the non-cooperative strategy computed in \cite{GfrOutVal23}. We see that the Leader only gains 17,98 units with the Stackelberg strategy, whereas the Followers loose between 31,25 units (Follower 2) and 92,87 units (Follower 1).
\begin{table}[ht]
\begin{tabular}{|c|c|c|c|c|c|}
\hline
Stackelberg strategy&Leader&Follower 1&Follower 2&Follower 3&Follower 4\\
&-2210.94& -2818.06 & -1978.79 & -2688.34& -2506.05\\
\hline
non-cooperative strategy&Player 1&Player 2&Player 3&Player 4&Player 5\\
&-2192.96 & -2910.93 & -2010.04 & -2767.10 & -2573.26\\
\hline
\end{tabular}
\caption{Loss function values.\label{Tab2}}
\end{table}
This small problem can be solved within split seconds. In order to test the numerical efficiency of the suggested approach we increased the dimensions regardless the economic meaning of the corresponding model. We considered randomly generated problems for several combinations of players and commodities as in \cite{GfrOutVal23}, where as the Leader always the first firm was selected. We always have that $m=l\cdot n=1000$. As
starting point we chose $\ee x0=(1,,\ldots,1)^T\in\R^n$ and the stopping tolerance for the BT- algorithm was chosen as $\epsilon = 10^{-6}\norm{\ee \xi0}$, where $\ee\xi0$ denotes the pseudosubgradient returned by our oracle at $\ee x0$. The computations were performed in MATLAB on a single core (Intel i7-14700 CPU, 2.10 GHz) of a standard desktop.

\begin{table}[ht]
\begin{tabular}{|c||c|c|c|c|c|c|c|c|c|}
\hline
(l,n)&(5,200)&(10,100)&(20,50)&(25,40)&(40,25)&(50,20)&(100,10)&(200,5)\\
$\#$ oracles&216&127&115&100&94&67&55&119\\
CPU Time (s)&782.5&126.4&58.4&66.4&42.0&20.1&10.9&14.2\\
\hline
\end{tabular}
\caption{Numerical performance of BT for the Stackelberg game\label{Tab3}}
\end{table}

Note that due to the nonsmooth costs of change the Leader has a nonsmooth objective. Further, the evaluation of $S(x)$ amounts to the solution of a nonsmooth Nash equlibrium problem, which is a very difficult task itself. To the best of our knowledge there is no other method  able to solve an MPEC of such complexity and size.
\subsection{Shape optimization in three dimensional contact problems with Coulomb friction}
The aim of this example is to demonstrate the ability of the proposed method to solve MPECs with a nonsmooth objective and a large number of state variables. We have chosen a shape optimization problem from \cite{Has2}, where one considers a 3D elastic body being in possible contact with a rigid obstacle and subject to surface tractions on a part of its boundary. Our aim is to optimize the shape of another part of the boundary in such a way that, after loading, the $\ell_\infty$-norm of the stress along the respective contact area will be minimized. Whereas the infinite-dimensional model can be found, e.g., in \cite{Has2}, we work with the discrete (algebraic) model taken over from \cite{GfrMaOutVal23}. It attains the form of the GE
\begin{equation}\label{EqGECoulomb}
0\in A(x)y-l(x) + Q(x,y),
\end{equation}
where $x$ specifies the shape of the optimized boundary and $y$ comprises the displacements of all nodes of the considered body. $A$ is the stiffness matrix (dependent on $x$), $l$ models the given surface tractions and the multifunction $Q$ ensures
\begin{enumerate}
\item[(i)]the satisfaction of the unilateral (Signorini) contact condition and
\item[(ii)]the Coulomb friction law at nodes lying in the contact region.
\end{enumerate}

More precisely, we consider elastic bodies of the form
\[\Omega(x)=\{\xi\in\R^3\mv (\xi_1,\xi_2)\in[0,2]\times[0,1], \alpha(x;\xi_1,\xi_2)\leq \xi_3\leq 1\},\]
where $\alpha(x;\cdot,\cdot)$ describes the bottom surface of the body in dependence of our design variable $x$. In our test example, we take a uniform grid of control points $(\frac{k-1}4,\frac{j-1}4), k=1,\dots,9,\ j=1,\ldots 5\}$ on $[0,2]\times[0,1]$ and define
\[\alpha(x;\xi_1,\xi_2) := \sum_{k=1}^9\sum_{j=1}^5x_{kj}s_k(\xi_1)t_j(\xi_2),\ x\in\R^{9\times 5},\ (\xi_1,\xi_2)\in\R^2,\]
where $s_k$, $t_j$ are cubic splines specified by the conditions
\begin{align*}&s_k(\frac{i-1}4)=\delta_{ki}:=\begin{cases}0&\mbox{if $i\ne k$},\\1&\mbox{else,}\end{cases}\quad s_k''(0)=s_k''(2)=0,\ k,i=1,\ldots,9,\\
&t_j(\frac{i-1}4)=\delta_{ji},\ t_j''(0)=t_j''(1)=0,\ j,i=1,\ldots5.
\end{align*}
The rigid obstacle is given by the half-space $\{\xi\in\R^3\mv \xi_3\leq 0\}$ and the contact region of the body $\Omega(x)$ is its bottom surface $\{(\xi_1,\xi_2,\alpha(x;\xi_1,\xi_2))\mv(\xi_1,\xi_2)\in[0,2]\times[0,1]\}$.
The reference body $\Omega(0)$, a prism of size $2\times 1\times 1$, is uniformly carved into $n_1\times n_2\times n_3$ bricks yielding $(n_1+1)(n_2+1)(n_3+1)=:N$ nodes. The finite element discretization is performed by using trilinear elements and the partition of each $\Omega(x)$ was constructed from the
partition of the reference body $\Omega(0)$ by a suitable coordinate transformation in the $\xi_3$
direction ensuring that the mapping $(x,y)\mapsto A(x)y-l(x)$ is continuously differentiable. At the left boundary, i.e. for $\xi_1=0$, we assume Dirichlet boundary conditions (no displacements) and therefeore the number of state variables, i.e., the number of unknown displacements,  is $m=3 M$ with $M=n_1(n_2+1)(n_3+1)$. Further we have $p=n_1(n_2+1)$ nodes in the contact region without the Dirichlet part. We assume that the nodes of our discretization are numbered in such a way that the first $p$ nodes $\xi^1,\ldots,\xi^p$ are exactly the ones lying on the contact boundary and we use the following ordering for an arbitrary vector $y\in\R^m$:
\[y=(y^1,\ldots,y^p,y^R)\mbox{ with } y^i\in\R^3, i=1,\ldots,p,\ y^R\in\R^{3(M-p)}.\]
For every vector $z\in\R^3$ we denote by $z_{12}:=\left(\begin{smallmatrix}z_1\\z_2\end{smallmatrix}\right)$ the vector containing the first two elements. Then
\begin{gather*}Q(x,y) = \prod_{i=1}^p Q^i(x,y^i)\times\{0_{3(M-p)}\}\mbox{ with }
Q^i(x,y^i):=\left\{\begin{pmatrix}-{\cal F}\mu\partial\norm{y^i_{12}}
\\\mu\end{pmatrix}\bmv\mu\in N_{\R_+}\big(y^i_3+\alpha(x;\xi^i_1,\xi^i_2)\big)\right\}.\end{gather*}
Here ${\cal F}>0$ denotes the coefficient of Coulomb  friction. For each $i=1,\ldots,p$ let $\alpha^i$ denote the $9\times 5$ matrix given by $\alpha^i_{k,j}=s_k(\xi^i_1)t_j(\xi^i_2)$ so that $\alpha(x;\xi^i_1,\xi^i_2)=\sum_{k=1}^9\sum_{j=1}^5\alpha_{kj}^ix_{kj}$. In order to unburden the used notation we will identify $x$ and $\alpha^i$ for the moment as vectors from $\R^{45}$ resulting in $\alpha(x;\xi^i_1,\xi^i_2)=\skalp{\alpha^i,x}$.

Further we consider the mapping $\tilde Q:\R^3\tto\R^3$ defined by
\[\tilde Q(v)=\left\{\begin{pmatrix}-{\cal F}\mu\partial\norm{v_{12}}
\\\mu\end{pmatrix}\bmv\mu\in N_{\R_+}(v_3)\right\}.\]
Clearly,
\[\gph Q =\{\big((x,y),z\big)\bmv\left(\myvec{y^i_{12}\\\skalp{\alpha^i,x}+y^i_3}, z^i\right)\in\gph \tilde Q,\ i=1,\ldots,p,\ z^R=0\}\]
and we may conclude from \cite[Theorem 4.1]{GfrOut23} that $\Sp^*Q\big((x,y),z)$ is the collection of all subspaces expressed as $\rge(Z^*,X^*,y^*)$ with
\begin{align*}Z^*=\begin{pmatrix}{W^*}^1&&&0\\&\ddots&&\\&&{W^*}^p&\\0&&&I_{3(M-p)}\end{pmatrix},\
X^*=\begin{pmatrix}\alpha^1{V^*_3}^1&&&0\\&\cdots&&\\&&\alpha^p{V^*_3}^p&\\0&&&0\end{pmatrix},\
Y^*=\begin{pmatrix}{V^*}^1&&&0\\&\ddots&&\\&&{V^*}^p&\\0&&&0\end{pmatrix},
\end{align*}
where $\rge({W^*}^i,{V^*}^i)\in\Sp^*\tilde Q\left(\left(\begin{smallmatrix}y^i_{12}\\\skalp{\alpha^i,x}+y^i_3\end{smallmatrix}\right), z^i\right)$, $i=1,\ldots,p$, and ${V^*_3}^i$ denotes the third row of the $3\times 3$ matrix ${V^*}^i$ (row vector). For the computation of subspaces contained in $\Sp^*\tilde Q\left(\left(\begin{smallmatrix}y^i_{12}\\\skalp{\alpha^i,x}+y^i_3\end{smallmatrix}\right), z^i\right)$ and their basis representation we refer to \cite{GfrMaOutVal23}. Since $\tilde Q$ is an SCD mapping \cite[Proposition 6.6]{GfrMaOutVal23}, $Q$ is an SCD mapping as well. By using similar arguments as in \cite[Proposition 6.3]{GfrMaOutVal23} one can show that $\gph Q$ is a closed subanalytic set and therefore $Q$ is SCD \ssstar at every point of its graph. In particular, Assumption (A3) is fulfilled.

Considering the design variable $x$ again as a $9\times 5$ matrix, the set $U_{ad}$ is given by the relationships
\begin{align}
\label{EqXNonpenetration}  &x_{kj}\geq0,\ k=1,\ldots,9,\ j=1,\ldots,5,\\
\label{EqXMeanDrichlet} &\frac 15\sum_{j=1}^5x_{1j}=0.01,\\
\label{EqXMeanRest} &\frac 1{40}\sum_{k=2}^9\sum_{j=1}^5x_{kj}=0.01.
\end{align}
Condition \eqref{EqXNonpenetration} ensures that the non-deformed body $\Omega(x)$ does not penetrate the rigid obstacle at the control points. Equations \eqref{EqXMeanDrichlet} and \eqref{EqXMeanRest} require that the mean distance of the body $\Omega(x)$ to the obstacle at the control points belonging to the Dirichlet boundary and the remainder of the bottom surface, respectively, equals to $0.01$. Clearly, Assumption (A2) is fulfilled.

For a solution $y$ to the GE \eqref{EqGECoulomb} for given $x$, the normal stress at the boundary point $\xi^i$, $i=1,\ldots,p$, is given by $\big(A(x)y-l(x)\big)^i_3/w_i$, where $w_i$ is some positive integration weight depending on the mesh size of our discretization.
Thus
\[\varphi(x,y)=\max_{i=1,\ldots,p}\frac1{w_i}\big(A(x)y-l(x)\big)^i_3\]
and, since $\R^p\ni s\mapsto \max_{i=1,\ldots,p}s_i$ is semismooth with respect to Clarke's subdifferential and $A(x)y-l(x)$ is continuously differentiable, we conclude from Lemma \ref{LemChainRule} that Assumption (A5) is satisfied.

There remains  to mention that Assumptions (A1) and (A4) are fulfilled whenever $\cal F$ is sufficiently small, cf. \cite[Theorems 3.8, 3.13]{Has2}.

\begin{figure}
    \centering
    \begin{minipage}[c]{.39\textwidth}
\centering
\includegraphics[width=0.99\textwidth]{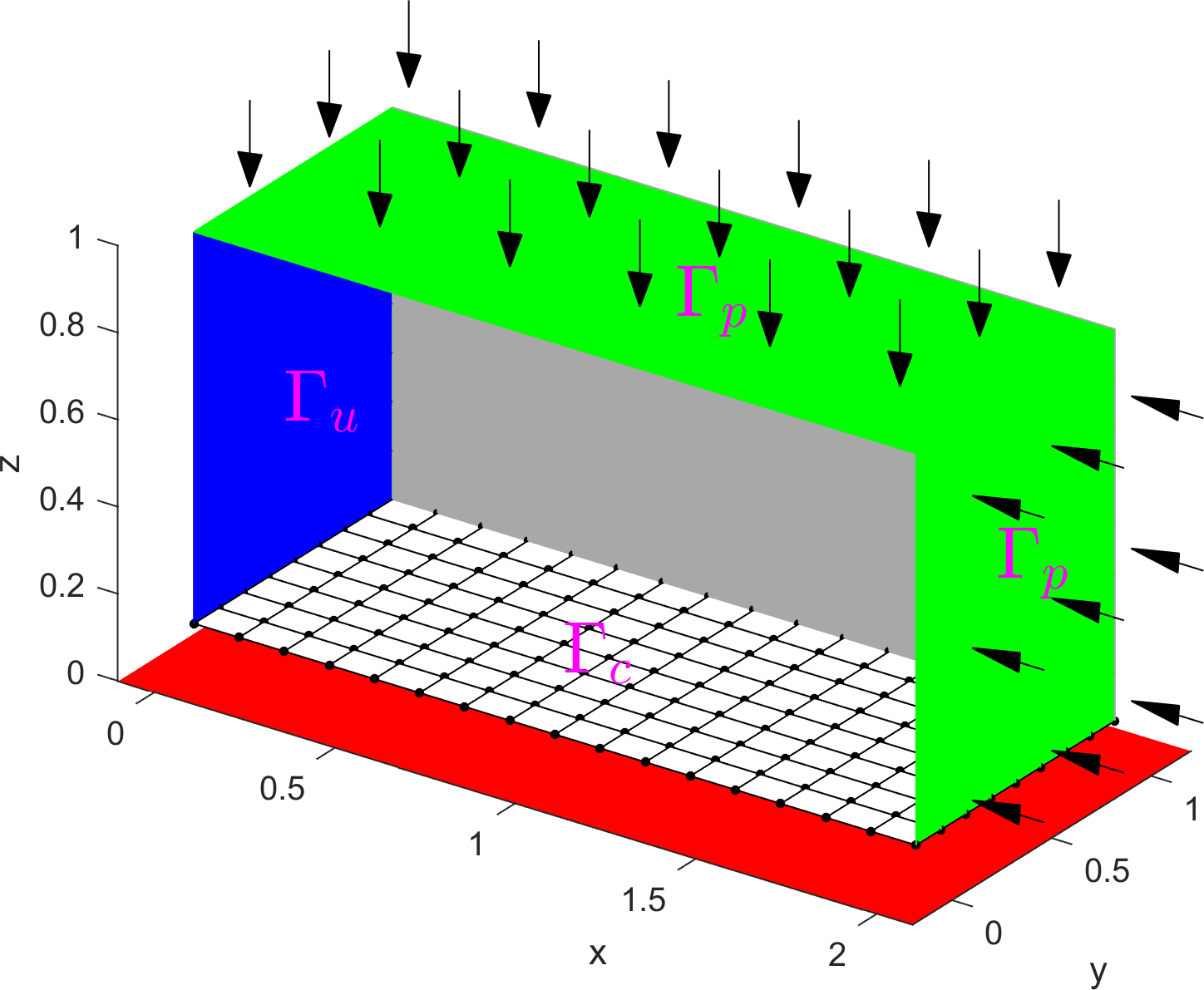}
\end{minipage}
\hspace{.01\textwidth}
\begin{minipage}[c]{.58\textwidth}
\vspace{0.5cm}
\centering
\includegraphics[width=0.99\textwidth]{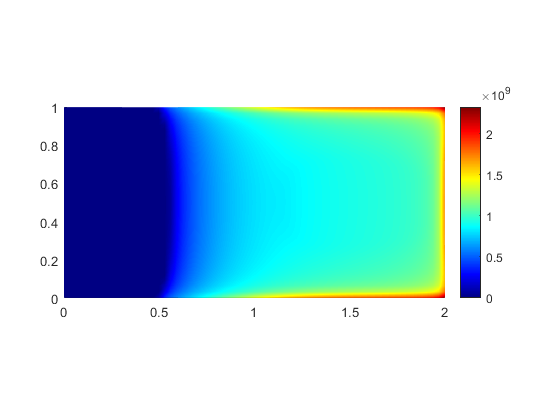}
\end{minipage}
\caption{\label{FigSetup}An undeformed cuboid domain is shown in the left pictures. The zero
Dirichlet boundary condition for displacements is assumed on the blue part of the boundary $\Gamma_u$, surface
tractions are applied to the green part of the boundary $\Gamma_p$ and the contact boundary $\Gamma_c$ is pressed against the (red) rigid plane foundation. Front faces are not visualized.\\
The distribution of normal stress in the plane bottom surface is shown in the right picture.}
\end{figure}
In our numerical test example we chose surface traction acting at the top boundary $\xi_3=1$ and at the right boundary $\xi_1=2$ pressing the body against the obstacle and the Dirichlet boundary as illustrated in Figure \ref{FigSetup}. The distribution of the normal stress for the  bottom surface given by $x_{kj}=0.01$, $k=1\ldots,9$, $j=1,\ldots,5$, i.e., a plane with distance $0.01$ to the obstacle, is also depicted there. For this plane bottom surface, the maximal normal stress of $2.439\cdot10^9$ is attained at the two corners of the right edge.
In dependence of the discretization level $lev$, the discretization is determined by
\[n_1=\lceil 4\cdot 2^{lev/2}\rceil,\ n_2=n_3=\lceil 2\cdot 2^{lev/2}\rceil.\]
We computed the optimal design $x$ for all levels $lev =3,\ldots,8$, where for $lev=3$ we took as starting point the design $x_{kj}=0.01$, $k=1,\ldots,9$, $j=1,\ldots,5$. For levels $lev>3$ we chose as starting point the optimal solution $x$ from the previous level. In Table \ref{TabShapeOptim} we display the number of state variables $m$ and the iterations, the oracle calls and CPU time needed by the BT-algorithm BTNCLC \cite{SZUG} with stopping tolerance $\epsilon =10^{-6}\norm{\ee \xi 0}$, where $\ee\xi0$ is the subgradient computed at the starting point. Further, the optmal objective function value $\varphi_{\rm opt}$ is given.
\begin{table}[h]
    \centering
    \begin{tabular}{|l|c|c|c|c|c|c|c|}
    \hline
        $lev$ & 3 & 4 & 5 & 6 & 7 & 8 \\
        $m$ & 1\,764 & 3\;888  & 11\,661 & 27\,744 & 79\,488 & 209\,088\\
        iterations& 134 & 97 & 104 & 400 & 643 & 853\\
        oracle calls& 139 & 97 & 104 & 414 & 675 & 902\\
        CPU time (s)& 57.7 &111.7 &458.0 & 5\,360 & 39\,340& 285\,760\\
        $\varphi_{\rm opt}$&$1.1806\cdot10^9$&$1.1251\cdot 10^9$&$1.0880\cdot10^9$& $1.0523\cdot10^9$&$1.0419\cdot10^9$&$1.0418\cdot 10^9$\\
        \hline
    \end{tabular}
    \caption{Performance of the BT-algorithm for various discretization levels.}
    \label{TabShapeOptim}
\end{table}
In the oracle calls the respective GE was solved with the  SCD \ssstar Newton method from \cite{GfrMaOutVal23}. We can see that for the finest discrtization level $lev=8$ one oracle call took more than 5 minutes. In Figure \ref{FigOptDesign} we present the optimized design of the bottom surface as well as the destribution of the normal stress on it. As expected, we now have a large part  of contact  where the maximal normal stress is attained (depicted in dark red).
\begin{figure}
       \centering
    \begin{minipage}[c]{.445\textwidth}
\centering
\includegraphics[width=0.99\textwidth]{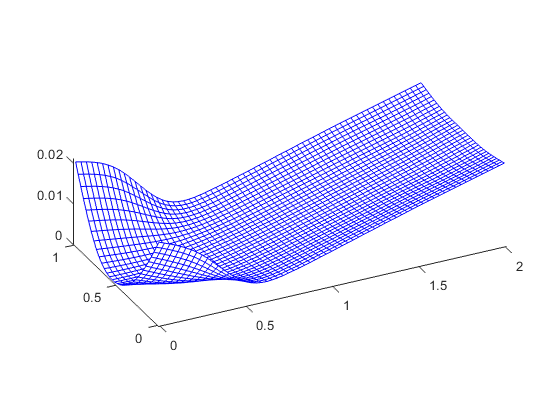}
\end{minipage}
\hspace{.01\textwidth}
\begin{minipage}[c]{.53\textwidth}
\vspace{0.5cm}
\centering
\includegraphics[width=0.99\textwidth]{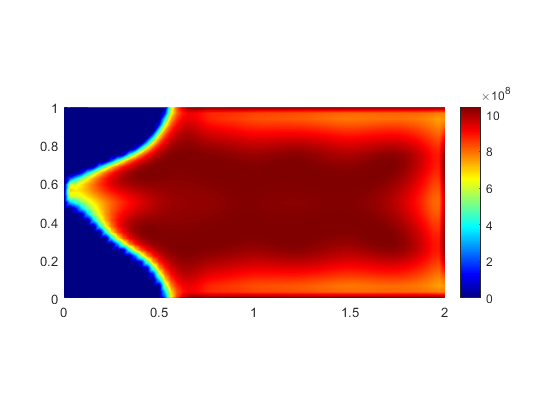}
\end{minipage}
\caption{\label{FigOptDesign}Optimal design (left) and distribution of normal stress in the optimized bottom surface (right).}

\end{figure}

\section{Bilevel programming}
In this section we are dealing with an optimistic bilevel program of the form
\begin{align}\label{EqBiLev}\min\;&\varphi(x,y)\\
\nonumber\mbox{subject to }&y\in\argmin_y\{\psi(x,y)+q(x,y)\},\\
\nonumber&x\in U_{ad},
\end{align}
where $\varphi:\R^n\times\R^m\to\R$ is locally Lipschitz, $\psi:\R^n\times\R^m\to\R$ is continuously differentiable and $q:\R^n\times\R^m\to\oR$ is an lsc function. Consider now the following assumption.

\begin{myitems}
  \item[\rm(B1)]There is an open set $\Omega\supset U_{ad}$ such that for every $x\in\Omega$ the set of optimal solutions
  \[\argmin_y\{\psi(x,y)+q(x,y)\}=:\{\sigma(x)\}\]
  is a singleton. Moreover, for every $\xb\in\Omega$ the function $q(\cdot,\sigma(\xb))$ is continuous at $\xb$ and there is a neighborhood $U$ of $\xb$ such that the set of global solutions $\{\sigma(x)\mv x\in U\}$ is bounded.
\end{myitems}
\begin{lemma}\label{LemContSigma}
  Under Assumption {\rm(B1)} the mapping $\sigma$ is continuous on $\Omega$.
\end{lemma}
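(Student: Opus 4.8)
The plan is to argue by sequential continuity. Fix $\bar x\in\Omega$ and an arbitrary sequence $x_k\to\bar x$ in $\Omega$; the goal is to show $\sigma(x_k)\to\sigma(\bar x)$. By the last clause of (B1) there is a neighborhood $U$ of $\bar x$ on which $\{\sigma(x)\mv x\in U\}$ is bounded, so for all large $k$ the points $\sigma(x_k)$ lie in a fixed compact set. Consequently it suffices to show that every convergent subsequence of $(\sigma(x_k))$ has limit $\sigma(\bar x)$; the whole sequence then converges to $\sigma(\bar x)$.

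So I would take a subsequence with $\sigma(x_{k_j})\to\bar y$. Since $\sigma(x_{k_j})$ minimizes $\psi(x_{k_j},\cdot)+q(x_{k_j},\cdot)$, testing against $y=\sigma(\bar x)$ gives
\[\psi(x_{k_j},\sigma(x_{k_j}))+q(x_{k_j},\sigma(x_{k_j}))\le \psi(x_{k_j},\sigma(\bar x))+q(x_{k_j},\sigma(\bar x)).\]
Now pass to the limit $j\to\infty$. On the right-hand side I use continuity of $\psi$ together with the hypothesis in (B1) that $q(\cdot,\sigma(\bar x))$ is continuous at $\bar x$, obtaining the limit $\psi(\bar x,\sigma(\bar x))+q(\bar x,\sigma(\bar x))$. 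On the left-hand side, continuity of $\psi$ yields $\psi(x_{k_j},\sigma(x_{k_j}))\to\psi(\bar x,\bar y)$, while lower semicontinuity of $q$ gives $\liminf_j q(x_{k_j},\sigma(x_{k_j}))\ge q(\bar x,\bar y)$. Combining these,
\[\psi(\bar x,\bar y)+q(\bar x,\bar y)\le \psi(\bar x,\sigma(\bar x))+q(\bar x,\sigma(\bar x)).\]

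The right-hand side is finite because $\sigma(\bar x)\in\dom q(\bar x,\cdot)$; hence $\bar y\in\dom q(\bar x,\cdot)$ and $\psi(\bar x,\bar y)+q(\bar x,\bar y)$ is finite. The displayed inequality then says $\bar y$ realizes a value not exceeding the infimum of $\psi(\bar x,\cdot)+q(\bar x,\cdot)$, so $\bar y\in\argmin_y\{\psi(\bar x,y)+q(\bar x,y)\}$. By the uniqueness part of (B1) this set equals $\{\sigma(\bar x)\}$, whence $\bar y=\sigma(\bar x)$, completing the argument.

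The argument is essentially routine; the only delicate point is the limit passage, where the three ingredients must be used in the correct direction — joint continuity of $\psi$, one-sided (lower) semicontinuity of $q$ evaluated at the limiting argmin $\bar y$, and the special continuity of the slice $q(\cdot,\sigma(\bar x))$ at $\bar x$, which is precisely why that condition is built into (B1). The boundedness clause in (B1) is what legitimizes the subsequence extraction; without it $\sigma$ need not be continuous or even locally bounded.
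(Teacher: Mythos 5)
Your proof is correct. It differs from the paper's in route: the paper disposes of the lemma in three lines by adding the indicator function $\delta_K(y)$ of a compact set $K$ containing $\{\sigma(x)\mv x\in U\}$ (which leaves the argmin sets unchanged and forces the level sets of the lower-level objective to be uniformly bounded) and then invoking the parametric-minimization theorem \cite[Theorem 1.17]{RoWe98}, whereas you prove the relevant special case of that theorem from scratch by a sequential compactness argument. The mathematical content is the same, and you use each clause of (B1) in exactly the place where the cited theorem needs it: the boundedness clause to extract convergent subsequences, joint lower semicontinuity of $q$ on the left-hand side of the minimality inequality, and continuity of the slice $q(\cdot,\sigma(\bar x))$ to pass to the limit on the right-hand side (this last step is what gives upper semicontinuity of the value function, which is the hypothesis one must verify before \cite[Theorem 1.17]{RoWe98} yields outer semicontinuity of the argmin map). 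What your version buys is transparency — it makes visible why the seemingly odd continuity condition on $q(\cdot,\sigma(\xb))$ is built into (B1) — at the cost of length; the paper's version buys brevity by delegating to a standard result, at the cost of leaving the verification of that result's hypotheses implicit.
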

\begin{proof}
  Consider $\xb\in \Omega$. By the posed assumption, we can find a neighborhood $U$ and a compact set $K\subset\R^m$ such that $\{\sigma(x)\mv x\in U\}\subset\inn K$. Then we may add to $q(x,y)$ the indicator function $\delta_K(y)$  without changing the optimal solution sets for $x\in U$ and
  for the level set of the lower-level problem one has
  \[\{y\mv \psi(x,y)+q(x,y)+\delta_K(y)\leq \alpha\}\subset K,\ x\in U,\ \alpha\in\R.\]
  Now continuity of $\sigma$ at $\xb$ follows from \cite[Theorem 1.17]
  {RoWe98}.
\end{proof}
Obviously, $\sigma(x)$ satisfies the first-order optimality conditions
\[0\in\nabla_y\psi(x,\sigma(x))^T+\partial_y\, q(x,\sigma(x))\]
and is therefore a solution of the GE \eqref{eq-2} with $f:=\nabla_y\psi^T$ and $Q:=\partial_y\, q$. However, the solution set $S(x)$ consists of all stationary solutions of the lower-level program and might not be a singleton. Nevertheless, if all the assumptions (A2)--(A5) are fulfilled with $S(x)$ replaced by $\sigma(x)$, Theorem \ref{ThConvBT} remains valid by virtue of Corollary \ref{CorSSImplMap}.

\begin{example}
  Consider the bilevel program \eqref{EqBiLev} with $n=1$, $m=2$ and
  \begin{gather*}\varphi(x,y)= -x+y_1,\ U_{ad}=[-1,1],\\
  \psi(x,y)=\frac 12 y_1^2-\frac 12 y_2^2-xy_1,\quad q(x,y)=\delta_{C}(y)\mbox{ with }C:=\{(y_1,y_2)\mv -\frac 14 y_1\leq y_2\leq \frac 12y_1\}.
  \end{gather*}
  Then
  \[\argmin_y\{\psi(x,y)+q(x,y)\}=\{\sigma(x)\}=\begin{cases}\{(\frac 43 x,\frac 23 x)^T\}&\mbox{if $x\geq0$,}\\
  \{(0,0)^T\}&\mbox{if $x<0$}\end{cases}\]
  and it follows that Assumption (B1) is fulfilled. For the reduced objective function of the leader we obtain
  \[\vartheta(x)=\begin{cases}\frac 13 x&\mbox{if $x\geq 0$}\\
  -x&\mbox{if $x<0$}\end{cases}\]
  and hence $(\xb,\yb)=(0,(0,0))$ is a solution of the problem.

  However, the mapping $S(x)=\{y\mv 0\in\nabla_y\psi(x,y)^T+\partial_y\, q(x,y)\}$ of stationary solutions of the lower-level problem can be computed as
  \[S(x)=\begin{cases}\{\sigma(x),(\frac{16}{15}x,-\frac 4{15}x)^T, (x,0)^T\}&\mbox{if $x\geq 0$,}\\ \{\sigma(x)\}&\mbox{if $x<0$,}\end{cases}\]
  showing that $S$ is not single-valued. Further, there does not even exist a single-valued localization of $S$ around $(0,(0,0))$. Nevertheless, Assumptions (A2)-(A5) are fulfilled with $S$ replaced by $\sigma$, $f(x,y)=\nabla_y\psi(x,y)^T$ and $Q(x,y)=\partial_y q(x,y)=N_C(y)$. Clearly, (A2) and (A5) are fulfilled. Since $\delta_{C}$ is  convex lsc and $\gph Q=\R^n\times\gph N_C$ is the union of finitely many convex sets, (A3) holds by virtue of Example \ref{ExSCD} and Proposition \ref{PropSSstar}. Since $C$ is a convex polyhedral set, the adjoint SC derivative of $Q=N_C$ can be computed via
  \begin{align*}&\Sp^* N_C(\sigma(x),-\nabla_y \psi(x,\sigma(x))^T)
  =\begin{cases}\{L_1^*\}&\mbox{if $x>0$,}\\
  \{L_2^*\}&\mbox{if $x<0$,}\\
  \{L_1^*,L_2^*,L_3^*,L_4^*\}&\mbox{if $x=0$}
  \end{cases}
  \end{align*}
  with $L_2^*=\rge(0,I_2)$, $L_4^*=\rge(I_2,0)$ and
  \[L_1^*=\rge\left(\begin{pmatrix} 1&0\\1/2&0\end{pmatrix},\begin{pmatrix}0&1/2\\ 0&-1\end{pmatrix}\right),\
  L_3^*=\rge\left(\begin{pmatrix}1&0\\-1/4&0\end{pmatrix},\begin{pmatrix}0&1/4\\ 0&1\end{pmatrix}\right);  \]
  cf. Section \ref{SubsecPolyhedral}.
  The matrices $\nabla_yf(x,\sigma(x))^TZ^*+Y^*$ which have to be checked in (A4) are therefore given by
  \[
    \begin{pmatrix}1&1/2\\-1/2&-1\end{pmatrix},\ I_2,\ \begin{pmatrix}1&1/4\\1/4&1\end{pmatrix}, \begin{pmatrix}1&0\\0&-1\end{pmatrix}
    \]
    and are obviously nonsingular. Thus (A4) also holds and it follows that we can solve this bilevel optimization problem with the BT-algorithm using the ImP approach, provided we can actually compute a global solution of the lower-level problem, which is of course a very stringent assumption.
\end{example}

\begin{example}\label{ExProj}
As a numerical experiment let us consider a bilevel program of the type \eqref{EqBiLev}, where the lower level objective is strictly convex in $y$ and therefore the map $S(x)$ is single-valued. We set $n=m=3$ and
\[\psi(x,y):=\frac 12(y-y_0)^TC(x)(y-y_0),\ \  q(x,y):=\delta_{\Gamma(y)},\]
where $y_0\in\R^3$ is a given "target" point. The $3\times 3$ diagonal matrix $C(x)={\rm diag}(x_1,x_2,x_3)$ depends on $x$ and $\Gamma$ is a closed convex set given by the five inequalities
\[
    \frac12y_1^2 \pm y_1 - y_3\leq 0,\quad \frac12 y_2^2 \pm y_2 - y_3\leq 0,\quad y_3\geq 0.
\]
Note that LICQ is violated at $y=(0,0,0)^T$, but both MFCQ and CRCQ are fulfilled. This example demonstrates the application of the results from Subsection \ref{SubsecInequ}.
For our numerical experiments, we chose a nondifferentiable function $\varphi(x,y):=\norm{y}_1$  and $U_{ad}:=[1,50]^3$.

We used the ImP-approach as described in Section \ref{SecImP}. The subspaces $L^*\in \Sp^*Q((x,S(x)),-f(x,S(x)))$ were computed as explained in Section \ref{SubsecInequ}. The resulting nonsmooth optimization problem was solved by the code BTNCBC \cite{SZUG}. For the lower-level quadratic optimization problems we used the software Gurobi \cite{gurobi}.

 \begin{figure}[h]
     \begin{center}
         {\includegraphics[width=0.45\hsize]{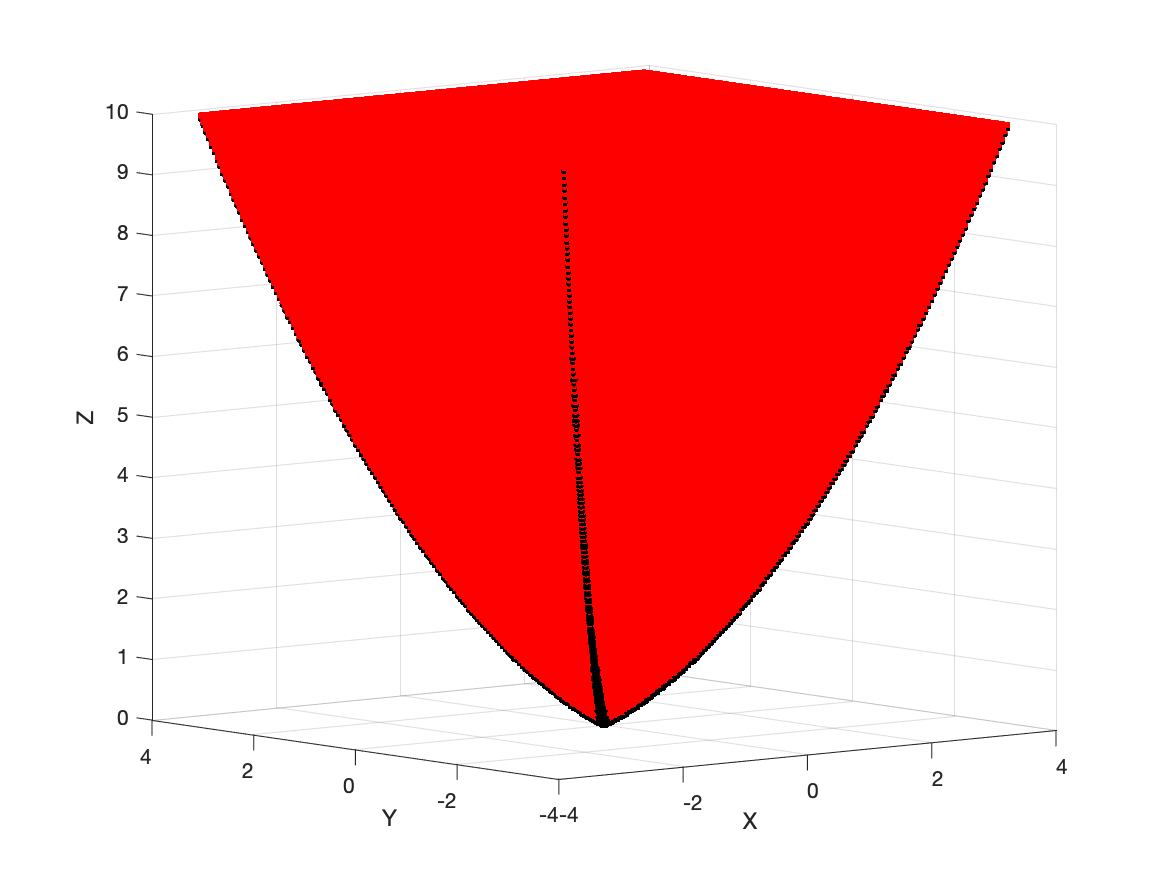}}
         {\includegraphics[width=0.45\hsize]{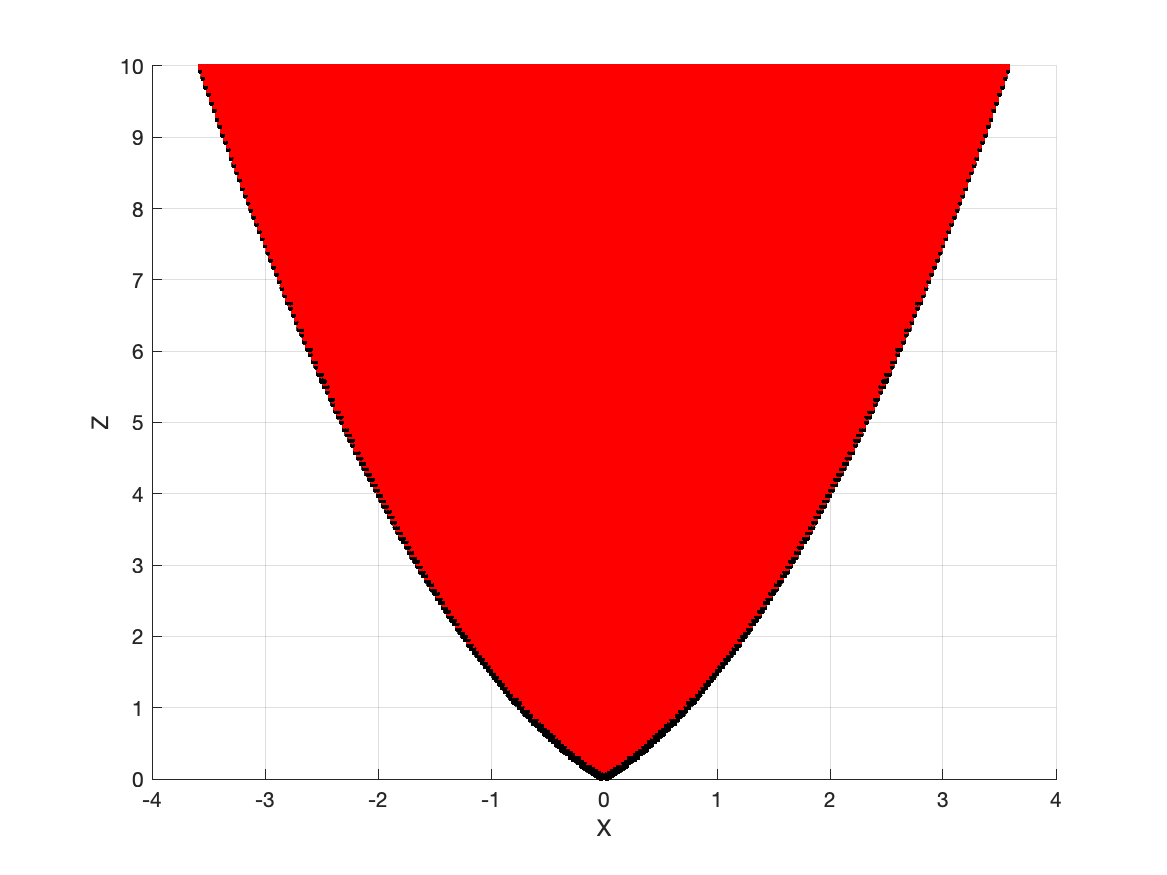}}
     \end{center}
     \caption{Set $\Gamma$ from Example~\ref{ExProj} in two different views.}
     \label{fig:proj}
 \end{figure}
We solved the problem with two target  points, $y_0^{(1)} = (1,\;2,\;3)^T$ and
$y_0^{(2)} = (1,\;2,\;-0.5)^T$.  In both cases, the initial point for BTNCBC was $x=(3,\;3,\;3)^T$.

In case of $y_0^{(1)}$, BTNCBC needed 8 iterations to reach the required accuracy $10^{-6}$. As expected, at the optimum the third component of $x$ was on the upper bound of $U_{ad}$: $\hat x=(3.000013,\, 3.000013,\, 49.999994)$ with corresponding $\hat y=
   (1.000004,\,
   1.648760,\,
   3.007966)^T$.

For $y_0^{(2)}$, BTNCBC reached the solution in 18 iterations giving $\hat x=(         3.000001,\,
   3.000001,\,
  31.270435)$ and  $\hat y$ with components of order $10^{-11}$.
\end{example}

\begin{remark}
For the numerical solution of the above problem one could alternatively use the approach from \cite[Section 6.2]{Dem} provided the upper objective $\varphi$ were continuously differentiable. Note that in our approach  the adjoint equation (\ref{EqOracle1}) has a smaller dimension than its counterpart in \cite{Dem}.
\end{remark}

\if{
\section{On a semismooth Newton method for decomposable optimization problems}
Consider the problem
\begin{align}
  \min_{x,y}\varphi(x,y)
\end{align}
where $\varphi:\R^n\times\R^m\to \oR$ is an lsc function. Assuming that minimization with respect to $y$ can be efficiently conducted, one could attempt to eliminate $y$ and to solve the reduced problem
\begin{equation}\label{EqRedProbl}\min_x \vartheta(x)\quad\mbox{with}\quad \vartheta(x):=\min_y \varphi(x,y).\end{equation}
 Under reasonable assumptions, the function $\vartheta$ is continuously differentiable and the gradient $\nabla\vartheta$ can be easily computed. Thus, numerous first-order methods are available for minimizing $\vartheta$ and one might ask whether there are second-order  methods yielding faster convergence. A candidate for such a second-order method would be the semismooth Newton approach. If $\nabla\vartheta^T$ is semismooth with respect to a mapping $\Theta:\R^n\tto\R^{n\times n}$, the semismooth Newton iteration is given by
\begin{equation}\label{EqSSNewt}\ee x{k+1}=\ee xk-{\ee Gk}^{-1}\nabla \vartheta(\ee xk)^T\quad \mbox{with}\quad \ee Gk\in\Theta(\ee xk).\end{equation}
If at a solution $\hat x$ of \eqref{EqRedProbl} all matrices $G\in(\cl\Theta)(\hat x)$ are nonsingular, there exists a neighborhood $U$ of $\hat x$ such that for all starting points $\ee x0\in U$ the sequence $\ee xk$ given by \eqref{EqSSNewt} is well defined and converges superlinearly to $\hat x$; cf.\ \cite{Ulb11}.

A crucial part of this semismooth Newton approach is the computation of the mapping $\Theta$ and we will explain in the sequel how this can be done.
Consider the following assumptions:
\begin{myitems}
  \item[\rm(SS1)] $\varphi(x,y)=\psi(x,y)+q(y)$, where $q:\R^m\to\oR$ is lsc and $\psi:\R^n\times\R^m\to\R$ is twice continuously differentiable.
  \item [\rm(SS2)] There is an open set $\Omega\subset\R^n$ such that
  \[\argmin_{y\in\R^m}\varphi(x,y)=\{\sigma(x)\}\quad\mbox{is a singleton for every $x\in \Omega$.}\]
  \item [\rm(SS3)] For every $\xb\in \Omega$ there is a neighborhood $U$ of $\xb$ such that the set $\{\sigma(x)\mv x\in U\}$ is bounded.
\end{myitems}
\begin{lemma}
  Assume that {\rm(SS1)-(SS3)} are fulfilled. Then the mapping $\sigma$ is continuous on $\Omega$ and the function $\vartheta$ given by \eqref{EqRedProbl} is continuously differentiable on $\Omega$ with  $\nabla\vartheta(x)=\nabla_x \psi(x,\sigma(x))$.
\end{lemma}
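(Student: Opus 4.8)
The plan is to establish the two assertions in sequence: first the continuity of $\sigma$ on $\Omega$, and then, using that continuity, that $\vartheta$ is finite and of class $C^1$ on $\Omega$ with $\nabla\vartheta(x)=\nabla_x\psi(x,\sigma(x))$.

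For the continuity of $\sigma$ I would reproduce, essentially verbatim, the argument in the proof of Lemma~\ref{LemContSigma}. Fix $\xb\in\Omega$ and, using (SS3), pick a neighborhood $U$ of $\xb$ and a compact set $K\subset\R^m$ with $\{\sigma(x)\mv x\in U\}\subset\inn K$. Adding $\delta_K$ to $\varphi(x,\cdot)$ changes neither the optimal value nor the (unique) minimizer for $x\in U$, while forcing every nonempty level set $\{y\mv\psi(x,y)+q(y)+\delta_K(y)\leq\alpha\}$ into $K$. Since $\psi$ is continuous and $q$ is lsc and proper, the function $(x,y)\mapsto\psi(x,y)+q(y)+\delta_K(y)$ is lsc and proper, and the preceding inclusion says it is level-bounded in $y$ locally uniformly in $x$ on $U$; hence continuity of $\sigma$ at $\xb$ follows from \cite[Theorem 1.17]{RoWe98} (outer semicontinuity of the argmin mapping, together with its single-valuedness from (SS2) and local boundedness from (SS3)). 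As $\xb\in\Omega$ was arbitrary, $\sigma$ is continuous on $\Omega$; in particular $\vartheta(x)=\psi(x,\sigma(x))+q(\sigma(x))$ is a finite real number for every $x\in\Omega$.

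For the differentiability I would use a Danskin-type two-sided optimality estimate. Fix $x\in\Omega$ and take $x'$ near $x$. Testing optimality of $\sigma(x')$ for the problem at $x'$ against the feasible point $y=\sigma(x)$ gives $\vartheta(x')\leq\psi(x',\sigma(x))+q(\sigma(x))$, hence
\[\vartheta(x')-\vartheta(x)\leq\psi(x',\sigma(x))-\psi(x,\sigma(x))=\nabla_x\psi(x,\sigma(x))(x'-x)+\oo(\norm{x'-x})\]
since $\psi\in C^1$. Testing optimality of $\sigma(x)$ at $x$ against $y=\sigma(x')$ gives $\vartheta(x)\leq\psi(x,\sigma(x'))+q(\sigma(x'))$, hence
\[\vartheta(x')-\vartheta(x)\geq\psi(x',\sigma(x'))-\psi(x,\sigma(x'))=\Big(\int_0^1\nabla_x\psi\big(x+t(x'-x),\sigma(x')\big)\,dt\Big)(x'-x).\]
Because $\sigma$ is continuous, $\sigma(x')\to\sigma(x)$ as $x'\to x$, so by continuity of $\nabla_x\psi$ on a compact neighborhood the integrand converges to $\nabla_x\psi(x,\sigma(x))$ uniformly in $t\in[0,1]$, and the right-hand side equals $\nabla_x\psi(x,\sigma(x))(x'-x)+\oo(\norm{x'-x})$. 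Combining the two one-sided estimates yields that $\vartheta$ is Fréchet differentiable at $x$ with $\nabla\vartheta(x)=\nabla_x\psi(x,\sigma(x))$; continuity of this expression in $x$ (again from continuity of $\nabla_x\psi$ and of $\sigma$) then gives $\vartheta\in C^1(\Omega)$.

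I expect the only point requiring care to be the \emph{uniformity} of the remainder term in the lower estimate: the base point $\sigma(x')$ of the Taylor expansion moves with $x'$, so one cannot simply invoke differentiability of $\psi$ at a fixed point. This is precisely why I would keep the integral mean-value representation above and combine it with the already-established continuity of $\sigma$; the continuity step itself rests entirely on \cite[Theorem 1.17]{RoWe98}, so beyond invoking it correctly no genuine obstacle remains.
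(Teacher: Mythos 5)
Your continuity argument coincides with the paper's: the paper simply cites its Lemma~\ref{LemContSigma}, whose proof is exactly the localization-plus-\cite[Theorem 1.17]{RoWe98} argument you reproduce (and the extra continuity hypothesis on $q(\cdot,\sigma(\xb))$ appearing in (B1) is automatic here since $q$ depends only on $y$). For the differentiability you take a genuinely different route. The paper computes the horizon subdifferential $\partial^\infty\varphi(x,\sigma(x))=\{0\}\times\partial^\infty q(\sigma(x))$, notes that consequently no $x^*\neq 0$ satisfies $(x^*,0)\in\partial^\infty\varphi(x,\sigma(x))$, and invokes the parametric-minimization result \cite[Corollary 10.14]{RoWe98} to obtain strict differentiability of $\vartheta$ with $\nabla\vartheta(x)=\nabla_x\psi(x,\sigma(x))$, finishing with continuity of $\sigma$ to get $C^1$. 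Your Danskin-type sandwich --- testing each minimizer against the other as a feasible point, and handling the moving base point in the lower estimate via the integral mean-value form together with uniform continuity of $\nabla_x\psi$ near $(x,\sigma(x))$ --- is correct and entirely elementary: the two one-sided bounds share the leading term $\nabla_x\psi(x,\sigma(x))(x'-x)$ with $\oo(\norm{x'-x})$ remainders, which squeezes out Fr\'echet differentiability, and continuity of $x\mapsto\nabla_x\psi(x,\sigma(x))$ gives $C^1$. What each approach buys: the paper's is shorter but leans on nontrivial variational-analysis machinery (horizon subdifferentials and the parametric subdifferential calculus) and yields strict differentiability pointwise; yours is self-contained, uses only (SS1)--(SS3) and first-year analysis, and correctly isolates the one delicate point (uniformity of the remainder when the expansion base $\sigma(x')$ moves). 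Both are valid proofs of the stated lemma.
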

\begin{proof}
  The continuity of $\sigma$  follows  from Lemma \ref{LemContSigma}. Further, $\partial^\infty\varphi(x,\sigma(x))=\{0\}\times  \partial^\infty q(\sigma(x))$ and hence there does not exist $x^*\not=0$ with $(x^*,0)\in\partial^\infty\varphi(x,S(x))$, where for the definition and properties of the horizon subdifferental $\partial^\infty$ we refer the reader to \cite{RoWe98}.  We may now  conclude from \cite[Corollary 10.14]{RoWe98} that $\vartheta$ is strictly differentiable at every $x\in\Omega$ with $\nabla \vartheta(x)=\nabla_x \psi(x,\sigma(x))$. Together with the continuity of $\sigma$ we conclude that $\vartheta$ is smooth on $\Omega$.
\end{proof}

In order to apply Corollary~\ref{CorSSImplMap} with $f=\nabla_y\psi^T$ and $Q(x,y)=\partial q(y)$, we need two additional assumptions:
\begin{myitems}
  \item[\rm(SS4)] For every $x\in \Omega$, the subgradient mapping $\partial q$ is SCD-\ssstar at $(\sigma(x),-\nabla_y \psi(x,\sigma(x)))$.
  \item[\rm(SS5)]  For every $x\in\Omega$ and for every subspace $L^*=\rge(Z^*,Y^*)\in\Sp^*(\partial q)(\sigma(x),-\partial_y\psi(x,\sigma(x))^T)$, where $Z^*,Y^*\in \R^{m\times m}$, there holds that the $m\times m$ matrix $\nabla^2_{yy} \psi(x,\sigma(x))Z^*+Y^*$ is nonsingular.
\end{myitems}
\begin{theorem}
      Under Assumptions {\rm(SS1)--(SS5)}, the mapping $\sigma$ is semismooth on $\Omega$ with respect to the mapping $\Psi:\R^n\tto\R^{m\times n}$ given by
  \[\Psi(x):=\begin{cases}\begin{array}{l}\{-(\nabla^2_{yy} \psi(x,y)Z^*+Y^*)^{-T}({Z^*}^T\nabla^2_{yx} \psi(x,y))\mv\\
   \qquad\qquad\rge(Z^*,Y^*)\in \Sp^*(\partial q)(\sigma(x),-\nabla_y\psi(x,\sigma(x))^T)\}\end{array}&\mbox{if $x\in \Omega$},\\\emptyset&\mbox{else.}\end{cases}\]
  Further, $\nabla \vartheta(x)^T=\nabla_x\psi(x,\sigma(x))^T$ is  semismooth on $\Omega$ with respect to the mapping $\Theta:\R^n\tto\R^{n\times n}$ given by
  \begin{equation*}
  \Theta(x):=\begin{cases}
    \{\nabla^2_{xx}\psi(x,\sigma(x)) +\nabla^2_{xy}\psi(x,\sigma(x)) A\mv  A\in\Psi(x)\}&\mbox{if $x\in\Omega$},\\
    \emptyset&\mbox{else.}
  \end{cases}
  \end{equation*}
\end{theorem}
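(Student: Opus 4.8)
The plan is to read the statement as a specialization of Theorem~\ref{ThSSImplMap} to the data $f:=\nabla_y\psi^T$ and $Q(x,y):=\partial q(y)$, combined with the smoothness of $\vartheta$ established in the preceding lemma and with the chain rule, Lemma~\ref{LemChainRule}. First I would verify the hypotheses of Theorem~\ref{ThSSImplMap} with $\sigma$ playing the role of $\sigma$ there. Continuity of $\sigma$ on $\Omega$ is the preceding lemma (which rests on Lemma~\ref{LemContSigma}); since $\psi\in C^2$, the map $f=\nabla_y\psi^T$ is continuously differentiable; and since $\sigma(x)$ globally minimizes $y\mapsto\psi(x,y)+q(y)$, Fermat's rule together with the sum rule for a $C^1$ function plus an lsc one gives $0\in\nabla_y\psi(x,\sigma(x))^T+\partial q(\sigma(x))$, i.e.\ $0\in F(x,\sigma(x))$ with $-f(x,\sigma(x))=-\nabla_y\psi(x,\sigma(x))^T\in Q(x,\sigma(x))$, for every $x\in\Omega$. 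Because $Q$ does not depend on $x$, the note following Theorem~\ref{ThSSImplMap} yields
\[\Sp^*Q\bigl((x,\sigma(x)),-f(x,\sigma(x))\bigr)=\{\rge(Z^*,0,Y^*)\mv\rge(Z^*,Y^*)\in\Sp^*(\partial q)(\sigma(x),-\nabla_y\psi(x,\sigma(x))^T)\},\]
so the ``$X^*$-block'' vanishes; the same triviality of the $x$-dependence shows that the SCD semismooth$^*$ property of $\partial q$ at $(\sigma(x),-\nabla_y\psi(x,\sigma(x))^T)$ assumed in (SS4) carries over to $Q$ at $((x,\sigma(x)),-f(x,\sigma(x)))$, since $\gph Q$ is, up to reordering of coordinates, $\R^n\times\gph\partial q$. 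Finally $\nabla_y f(x,y)^T=\nabla^2_{yy}\psi(x,y)$, so the nonsingularity requirement of Theorem~\ref{ThSSImplMap} on $\nabla_y f(x,\sigma(x))^TZ^*+Y^*$ is exactly (SS5).

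Second, I would simply invoke Theorem~\ref{ThSSImplMap}: it asserts that $\sigma$ is semismooth on $\Omega$ with respect to its mapping $\Psi$, and substituting $X^*=0$, $\nabla_y f(x,y)^T=\nabla^2_{yy}\psi(x,y)$ and $\nabla_x f(x,y)=\nabla^2_{yx}\psi(x,y)$ into the formula for $\Psi$ there reproduces verbatim the $\Psi$ in the present statement. By the remark following Theorem~\ref{ThSSImplMap}, $\sigma$ is then also locally Lipschitz on $\Omega$.

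Third, for $\nabla\vartheta^T$ I would use the preceding lemma, by which $\vartheta\in C^1(\Omega)$ with $\nabla\vartheta(x)=\nabla_x\psi(x,\sigma(x))$, so that $\nabla\vartheta(x)^T=(F_2\circ F_1)(x)$ with $F_1(x):=(x,\sigma(x))$ and $F_2(u,v):=\nabla_x\psi(u,v)^T$. The mapping $F_1$ is semismooth on $\Omega$ with respect to $\Psi_1(x):=\{\left(\begin{smallmatrix}I_n\\ A\end{smallmatrix}\right)\mv A\in\Psi(x)\}$, because the identity block contributes nothing to the semismoothness defect, i.e.\ $F_1(x)-F_1(\xb)-\left(\begin{smallmatrix}I_n\\ A\end{smallmatrix}\right)(x-\xb)=\bigl(0,\sigma(x)-\sigma(\xb)-A(x-\xb)\bigr)$, which is $\oo(\norm{x-\xb})$ uniformly over $A\in\Psi(x)$ by the semismoothness of $\sigma$, while $\dom\Psi_1=\Omega$ and local boundedness are inherited from $\Psi$; and $F_2$, being continuously differentiable as $\psi\in C^2$, is $\nabla F_2$-semismooth with $\nabla F_2(u,v)=\bigl(\nabla^2_{xx}\psi(u,v)\mv\nabla^2_{xy}\psi(u,v)\bigr)$. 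Lemma~\ref{LemChainRule} then shows that $\nabla\vartheta^T$ is semismooth on $\Omega$ with respect to
\[x\longmapsto\{\nabla F_2(F_1(x))\,B\mv B\in\Psi_1(x)\}=\{\nabla^2_{xx}\psi(x,\sigma(x))+\nabla^2_{xy}\psi(x,\sigma(x))A\mv A\in\Psi(x)\}=\Theta(x),\]
which is the assertion. (Alternatively one applies the scalar last part of Theorem~\ref{ThSSImplMap} to each component $\varphi_i(x,y):=\partial_{x_i}\psi(x,y)$, which is $C^1$ hence $\onabla\varphi_i$-semismooth, and reassembles the finitely many rows, losing nothing of the $\oo(\norm{x-\xb})$ rate.)

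The argument is essentially bookkeeping, and I do not expect a genuine obstacle. The two points that demand care are the passage from $\partial q$ to $Q(x,y)=\partial q(y)$ --- checking that both the SCD semismooth$^*$ property and the shape of $\Sp^*Q$ survive the trivial dependence on $x$ --- and the consistent identification, with the correct transposes, of the Hessian blocks $\nabla^2_{yy}\psi,\nabla^2_{yx}\psi,\nabla^2_{xx}\psi,\nabla^2_{xy}\psi$ with the Jacobians $\nabla_y f,\nabla_x f$ and $\nabla F_2$ entering Theorem~\ref{ThSSImplMap} and Lemma~\ref{LemChainRule}.
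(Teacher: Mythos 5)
Your proposal is correct and follows exactly the route the paper takes: the paper's entire proof is the single sentence ``The proof follows from Theorem~\ref{ThSSImplMap},'' and your write-up is a careful execution of that specialization (with $f=\nabla_y\psi^T$, $Q(x,y)=\partial q(y)$, the $X^*=0$ reduction for $x$-independent $Q$, and the chain rule for the vector-valued $\nabla\vartheta^T$). No gaps; you have simply supplied the bookkeeping the paper leaves implicit.
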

The proof follows from Corollary \ref{CorSSImplMap}. We may thus conclude that problem \eqref{EqRedProbl} can be solved numerically by the semismooth Newton approach given by \eqref{EqSSNewt}.
}\fi

\section{Conclusion}

The main contributions of the paper can be summarized as follows.
\begin{enumerate}
\item[(i)] We have substantially extended the class of MPECs, in which  pseudosubgradients of the reduced objective can be computed. This concerns both the lower-level problem and the upper-level objectives. The respective requirements are expressed in terms of problem data. Further, we have clarified the nature of stationarity conditions that are fulfilled by the accumulation points of sequences generated by the applied bundle algorithms.
\item[(ii)] Apart from MPECs, satisfying the standing assumptions, we have employed the achieved theoretical results also in a class of bilevel programming problems. This leads to reduced problems solvable by a bundle method as well. \if{Moreover, we have specified a class of decomposable optimization problems which, after the reduction, can be solved by a nonsmooth Newton-type method.}\fi
\item[(iii)] Finally, the consequent usage of the recently developed theory of SCD mappings has enabled us to weaken the imposed assumptions and to simplify the solution of adjoint equations for the computation of pseudosubgradients.
\end{enumerate}

\if{
Let us finish with a few observations.

The paper contains a certain ``minimal'' set of assumptions, under which a class of MPECs and bilevel programs can be numerically solved by the ImP approach combined with a bundle method.

In some MPECs the lower-level problem may be solved by the SCD-\ssstar Newton method (\cite{GfrOut22a}). In such a case, in each iteration of the bundle method a suitable subspace $L^*$ is available and the computation of pseudosubgradients reduces to the solution of the linear square system (\ref{EqOracle1}).

Nonsmooth upper-level objectives may appear, e.g., in connection with possible state constraints, augmented to the objective via an exact penalty; see \cite{CO}.
}\fi

\section*{Declarations}

\noindent{\bf Conflict of interest.} The authors report that there are no competing interests to declare.

\noindent{\bf Data availability. } The data used in Section \ref{SecStackel} are available from \cite{GfrOutVal23}.

\noindent{\bf Funding.}
The work of M.~Ko\v{c}vara and J.~V.~ Outrata has been supported by the Czech Science Foundation through project No.\ 22-15524S.

\end{document}